\let\NAT@parse\undefined
\newcommand{\ZAA}[1]{{\color{black}{ #1}}}
\definecolor{mygreen}{RGB}{0, 100, 0}
\def\real{\mathbb{R}}
\newcommand\oprocendsymbol{\hbox{$\square$}}
\newcommand\oprocend{\relax\ifmmode\else\unskip\hfill\fi\oprocendsymbol}
\newtheorem{proposition}{Proposition}
\newtheorem{theorem}{Theorem}
\newtheorem{lemma}{Lemma}
\newtheorem{remark}{Remark}
\newtheorem{example}{Example}
\newtheorem{definition}{Definition}
\def \bs {\boldsymbol}
\def\B{K}
\def\C{C}
\def\H{H}
\def\nc{\mathcal{C}} 
\def\ino{\mathcal{K}} 
\newcommand\bit[1]{\textit{\textbf{#1}}}
\title{ Phase Reduction and Synchronization\\ of Coupled Noisy Oscillators}
\author{Zahra Aminzare  
\thanks{Zahra Aminzare is with the Department of Mathematics,  University of Iowa, IA, USA.
	{\tt\small zahra-aminzare@uiowa.edu}}
\and 
 Vaibhav Srivastava
 \thanks{Vaibhav Srivastava is with  	
	Electrical and Computer Engineering, Michigan State University, East Lansing, MI , USA.
	{\tt\small 	vaibhav@egr.msu.edu }}%
	}
\def \mc {\mathcal}
\begin{document}
\maketitle

\begin{abstract}

We study the synchronization behavior of a noisy network in which each system is  driven by two sources of state-dependent noise: (1) an intrinsic noise which is common among all systems and can be generated by the environment or any internal fluctuations, and (2) a coupling noise which is generated by interactions with other systems. After providing sufficient conditions that foster synchronization in  networks of general noisy systems, we focus on \textit{weakly} coupled networks of noisy \textit{oscillators} and, using the first- and second-order phase response curves (PRCs), we derive a reduced order stochastic differential equation to describe the corresponding phase evolutions. Finally, we derive  synchronization conditions based on the  PRCs and illustrate the theoretical results on a couple of models. 
\end{abstract}

{\bf Key words.} Noisy networks, stochastic synchronization, phase reduction, first-order phase response curve, second-order phase response curve, stochastic averaging theory.

{\section{Introduction}\label{sec:intro}

Coupled oscillator models are fundamental in modeling and analyzing the synchronization behavior of systems with rhythmic behavior, including systems in ecology, neuroscience, and engineering~\cite{winfree2001geometry, hoppensteadt2012weakly,demir2000phase,hajimiri1998general, goldbeter2002computational, Erment-Terman10, kuramoto2003chemical}. These simple models often miss environmental fluctuations as well as internal and external disturbances. Therefore, a stochastic dynamics approach provides a significant compromise to keep modeling complexity tractable and still capture important phenomena. 
 
Phase Response Curves (PRCs), which are computable both mathematically and experimentally  \cite{winfree2001geometry, 2007_Tateno_Robinson, 2010_Gouwens_etal, Schwemmer2012}, 
 provide fundamental information about how these oscillator models perform in a neighborhood of a stable limit cycle and facilitate a reduction of a high dimensional model to a 1-dimensional phase model. Furthermore, when multiple oscillator models interact with each other, such 1-dimensional reduced models enable the development of coupled oscillator models that use only the phase information and relative timing of their limit cycles. 

PRC theory is typically developed for small deterministic perturbations around a stable limit cycle and in such cases it is sufficient to consider only the first order effects of the perturbation on the limit cycle. In this paper, we consider stochastic perturbations to the limit cycle and develop a stochastic phase reduced model. 

The idea of phase reduction goes back at least to~\cite{Malkin49} and has been expanded and formalized in subsequent works,  including~\cite{winfree1974patterns, winfree2001geometry, guckenheimer1975isochrons}. The references~\cite{schultheiss2011phase, sacre2014sensitivity, brown2004phase} provide a good tutorial introduction to the topic.    
Phase reduction for noisy oscillators has also received remarkable attention 
\cite{hajimiri1998general,demir2000phase,teramae2009stochastic,2010_Schwabedal_Pikovsky, 
2013_Schwabedal_Pikovsky,2014_Newby_Schwemmer, Moehlis_2014, 2014_Thomas_Lindner,2017_Bonnin,bonnin2017amplitude,2018_Bressloff_MacLaurin,2019_Thomas_Lindner,2020_Bressloff_MacLaurin}. 
Compared with these works, we provide complementary techniques that illuminate phase reduction from a PRCs perspective. Ermentrout et al.~\cite{2011_Ermentrout_Beverlin_Troyer_Netoff} and 
 Teramae et al.~\cite{teramae2004robustness,teramae2009stochastic} consider a setup very similar to that studied in the present paper. However, their computations rely on the Stratonovich interpretation of stochastic differential equations, which leads to different reduced order models than those derived below using the It\^{o} interpretation.

Our main goal is to find conditions that foster synchronization in  networks of weakly coupled stochastically perturbed oscillators.  In such networks,  the oscillators can sense a common perturbation or  perturbation through their interactions with other oscillators in the network.  
Toward this end, we contribute two types of results. 
First, we provide conditions that foster synchronization in a network of systems which sense two different sources of noise: 
(1) an intrinsic state-dependent noise which is common among all systems and can be generated by the environment or any internal fluctuations, and (2) a state-dependent coupling noise which is generated by interactions with other systems. 
Although our goal is to study a network of \textit{weakly} coupled \textit{oscillators}, our first result is \textit{not} limited to such a network and is valid for more general networks. 
 Second, we develop a  stochastic phase reduced model for a  network of weakly coupled noisy oscillators, where we use the notion of first- and second-order phase response curves, and averaging theory for stochastic differential equations. 
 Finally, we apply the developed stochastic synchronization theory to the coupled phase equations to obtain the desired results. 

The remainder of the paper is organized as follows. In Section~\ref{sec:synchronization-two-coupled}, we prove the main results of this paper. After introducing noisy networks and defining stochastic synchronization, we provide conditions that foster stochastic synchronization in noisy networks. 
In Section~\ref{sec:background-phase-reduction}, we recall some background on {PRCs} and phase reduction. In Section~\ref{sec:single-phase-reduction}, we derive the phase reduced model for noisy oscillators and develop computational techniques to determine second-order {PRCs}. 
In Section~\ref{sec:coupled-phase-reduction}, we derive the phase reduced model for weakly coupled noisy oscillators
In Section~\ref{sec:synchronization-coupled-phase}, we apply the results of Section~\ref{sec:synchronization-two-coupled} to the phase reduced models in Section~\ref{sec:coupled-phase-reduction} to find conditions that foster synchronization in weakly coupled noisy oscillators. We illustrate these theoretical results on a couple of models. 
Finally, we conclude in Section~\ref{sec:conclusions}.  

\section{Stochastic synchronization  in noisy networks}\label{sec:synchronization-two-coupled}

In this section, we consider  a noisy network  of $N$ nonlinear systems with two sources of state-dependent noise: (1) an intrinsic noise which is common among all systems and can be generated by the environment or any internal fluctuations, and (2) a coupling noise which is generated by interactions with other systems. 
For $i=1,\ldots, N$, let the stochastic differential equation (SDE) 
 \begin{align}\label{eq:general-form2}
  d \phi_i \;=\;  \underbrace{\mathcal{F}(\phi_i\ZAA{,t})dt + \sigma{\ino}(\phi_i\ZAA{,t}) dW}_{\text{intrinsic dynamics}}
   \;+\underbrace{\displaystyle\sum_{j=1}^N c_{ij} \left(\epsilon\mathcal{H}(\ZAA{\phi_j,\phi_i}) dt +\delta \nc (\ZAA{\phi_j,\phi_i}) d  W_{ij}(t)\right)}_{\text{coupling dynamics}}
\end{align}
describe the dynamics of  system $i$ with state $\phi_i\in\mathbb{R}^n$. The intrinsic and coupling dynamics of system $i$ are described as below. 

\textbf{Intrinsic dynamics.} The systems are identical and governed by an $n-$dimensional vector of nonlinear functions, $\mathcal F$. 
There is a source of noise in  \eqref{eq:general-form2} which is common among all the systems in the network and described by $\sigma\ino(\phi_i\ZAA{,t}) dW$. The constant $\sigma\geq0$ is  the common noise intensity,  $\ino(\phi_i\ZAA{,t}) \in \mathbb{R}^{n\times n}$, and $W$ is an $n-$dimensional vector of independent standard Wiener processes. 

\textbf{Coupling dynamics.} Denote the underlying network graph by  $\mathcal{G}$ and assume that it is an undirected and  weighted graph with weight $c_{ij}$, i.e.,  $c_{ij}=c_{ji}\geq0$, with $c_{ij}>0$ if $i$ and $j$ are connected;  and $c_{ij}=0$ if $i$ and $j$ are not connected. 
The interaction between  system $i$ and another system, say $j$,  influences the dynamics of $i$ through  a deterministic term 
$c_{ij}\epsilon \mathcal{H}(\phi_j,\phi_i) dt$ and a stochastic term $c_{ij}\delta \nc(\phi_j,\phi_i) dW_{ij}$, where $\nc(\phi_j,\phi_i)\in\mathbb{R}^{n\times n}$, and 
$\mathbf{W}_i=(W_{i1}, \ldots, W_{iN})^\top$ is a vector of independent standard Wiener processes. 
The constants $\epsilon\geq0$ and $\delta\geq0$ respectively describe the  coupling strength and interaction noise intensity of the overall network while $\epsilon c_{ij}$ and $\delta c_{ij}$ respectively specify the coupling strength and  noise intensity of each connection. 
{We further assume that  if $\epsilon=0$ or $\mathcal H\equiv 0$ then $\delta=0$. }

For now, we only assume that $\mathcal F$, $\mathcal H$, $\ino$, and $\nc$ are nonlinear functions and they are nice enough so that  \eqref{eq:general-form2} has a unique solution, for example, they are Lipschitz and satisfy a linear growth condition. See \cite[Section 2.3]{Mao_book} for more details. Later in Theorems \ref{thm:cond:synch2} and \ref{thm:cond:synch3} below, we will discuss appropriate conditions of these functions. 

 Equation~\eqref{eq:general-form2} 
represents a broad range of network dynamics that can model many biological systems. For example, this framework covers the interconnected Kuramoto phase oscillators that model the brain's neural activity where the neural dynamics are subject to noise. The level of a functional connection between two regions is proportional to synchronization between the oscillators' phases associated with the two regions \cite{2019_Menara_Baggio_Bassett_Pasqualetti}.
As another example, the framework covers a coupled bursting models \cite{SIAM1,SIAM2} that approximate the dynamics of coupled central pattern generators (CPGs) \cite{Marder_Bucher_CPG_review, Ijspeert_CPG_review} which are complex networks of neurons that produce rhythmic behaviors, such as walking.  Synchronization properties and clusters formation of coupled CPGs explain the generation of various gait patterns in animal locomotions  \cite{ASH18,AH19}. 

After reviewing definitions of \textit{stochastic stability} and \textit{stochastic synchronization}, 
in Theorems \ref{thm:cond:synch2}-\ref{thm:cond:synch4}, we will  provide sufficient conditions that foster stochastic synchronization in \eqref{eq:general-form2}.

\begin{definition}[\bit{Stochastic stability}]\label{def:moment:stability}
Let $x(t)$ be a solution of an SDE. Then,
\begin{description}[leftmargin=*]
\item[Moment exponential  stability.]
$x(t)$ is $p-$th  ($p>0$) moment exponentially  stable if there are a pair of positive constants $C$ and $c$ and a neighborhood $\Omega_0$ of $x(0)$ such that for any  solution $y$ with $y(0)\in\Omega_0$ 
\begin{equation*}\label{eq:moment:stability}
\mathbb{E} \|y(t) - x(t)\|^p < C\;\mathbb{E} \|y(0) - x(0)\|^p e^{-c t},  \quad \forall t>0,
\end{equation*}
where $\mathbb{E}$ denotes the expected value and $\|\cdot\| $ denotes the Euclidean norm. When $p=2$, it is said to be exponentially stable in mean square. 

\item[Almost sure exponential stability.] $x(t)$ is almost sure exponentially  stable if there is a  neighborhood $\Omega_0$ of $x(0)$ such that for any  solution $y$ with $y(0)\in\Omega_0$ 
\begin{equation*}\label{eq:as:stability}
\limsup_{t\to\infty}\dfrac{1}{t}\;\log\|y(t) - x(t)\| <0, \quad \text{almost surely (a.s.), }
\end{equation*}
which means $\mathbb{P}\left\{ \limsup_{t\to\infty}\frac{1}{t}\;\log\|y(t) - x(t)\| <0\right\}=1$.
\end{description}
\end{definition}

\begin{definition}[\bit{Stochastic invariance}]\label{def:stochastic:invariance}
A set $\mathcal{S}$  is called an invariance set for an SDE, if for any $x_0\in\mathcal{S}$, 
\[\mathbb{P} \left\{ x(t)\in\mathcal{S}, \; \forall t\geq 0 \right\}=1,\]
where $x(t)$ is a solution of the SDE starting from $x_0$ at $t=0$. 

Moreover, for a fixed $p>0$, $\mathcal{S}$ is called $p-$th moment (respectively, almost sure) exponentially stable if any $x\in \mathcal{S}$ is $p-$th  moment  (respectively, almost sure) exponentially stable. 
\end{definition}

\begin{definition}[\bit{Stochastic synchronization}]\label{def:stochastic:synchronization}
Let $\mathcal{S}$ be the set of states $x=(x_1,\ldots,x_N)^\top$ such that $x_1=\cdots=x_N$, i.e., 
$\mathcal{S}:=\{x\;|\; x_1=\cdots=x_N \}$. 
We say that $\mathcal{S}$ is a synchronization manifold if it is stochastically invariant and ($p-$th moment or almost surely) exponentially stable. 
We say that a network stochastically synchronizes if it admits a synchronization manifold, i.e.,
there exist $c, C>0$ such that for any solution $x(t)$, there exists $s(t)\in \mathcal{S}$ such that
\begin{equation}\label{synch:moment:stability}
\mathbb{E} \|x(t) - s(t)\|^p < C\;\mathbb{E} \|x(0) - s(0)\|^p e^{-c t},  \quad \forall t>0,
\end{equation}
or for any solution $x(t)$, there exists $s(t)\in \mathcal{S}$ such that
\begin{equation*}\label{synch:as:stability}
\limsup_{t\to\infty}\dfrac{1}{t}\;\log\|x(t) - s(t)\| <0, \quad a.s.
\end{equation*}
\end{definition}

There have been some efforts to find conditions for synchronization in stochastic networks. For example, 
in \cite{Russo-Wirth-Shorten-2019}  
a sufficient condition for synchronization in a stochastic network of nonlinear systems is given. In this reference, the authors consider nonlinear state- (and time-) dependent diffusion matrices, however, they assume that the deterministic coupling is linear, i.e., $\mathcal{H}$ is assumed to be linear. In  \cite{Russo-Shorten-2018} 
the authors consider a stochastic network of nonlinear systems with linear coupling and a common noise. 
Here, we  study the synchronization properties of  a network of nonlinear systems in the presence of both  nonlinear noisy coupling  functions and nonlinear common noise. 
There are also some interesting results which guarantee synchronization onset in networks with no coupling but common noise, i.e.,  $\epsilon=\delta=0$ and 
$\sigma>0$, \cite{teramae2004robustness}.  

Although the systems in \eqref{eq:general-form2} can be of any arbitrary dimension, in the following theorem,  for the ease of notation, we assume that the state variables are 1-dimensional, $n=1$. 

We denote the Laplacian matrix of the underlying network graph $\mathcal{G}$ by $L_{[c]}$ (where the subscript $[c]$ represents the weights $c_{ij}$s) and its eigenvalues by $0=\lambda_{1,[c]} \leq \lambda_{2,[c]}\leq\cdots\leq \lambda_{N,[c]}$. 

\begin{theorem}[\bit{Stochastic synchronization: exponential stability in mean square}]\label{thm:cond:synch2}
  Fix $\Omega_1\subset \mathbb{R}$ and let $\Omega_2:=\{x-y \;|\: x, y \in \Omega_1\}$. 
Consider \eqref{eq:general-form2} and assume that:
\begin{enumerate}[leftmargin=*]
\item there exists a constant $ \bar c_{\mathcal{F}}$ such that for all $x,y\in \Omega_1$ \ZAA{and $t\geq0$},  
\[(x-y) (\mathcal{F} (x\ZAA{,t}) - \mathcal{F} (y\ZAA{,t})) \leq \bar c_{\mathcal{F}}(x-y)^2;\] 

\ZAA{\item $\mathcal{H}:\Omega_1\times\Omega_1\to\mathbb{R}$ satisfies
$\mathcal H(x,y)= -\mathcal H(y,x)$ and there exists a constant $ \underbar c_{\mathcal{H}}$  such that for all $x,y\in \Omega_1$, 
$  \underbar c_{\mathcal{H}}(x-y)^2 \leq (x-y) \mathcal{H}(x,y)$; 

\item there exists a non-negative constant $ \bar c_{\nc}$ such that for all $x,y\in\Omega_1$, $|\nc(x,y)| \leq \bar c_{\nc} |x-y|$;} and

\item there exists a  non-negative constant $ \bar{c}_{\ino}$ such that for all $x\in\Omega_1$ \ZAA{and $t\geq0$}, 
\[|\ino(x\ZAA{,t})-\ino(y\ZAA{,t})| \leq  \bar {c}_{\ino} |x-y|.\]
\end{enumerate}

Then for any solution $(\phi_1,\ldots,\phi_N)^\top$, 
\begin{equation*}\label{}
\mathbb{E} \sum_{i=1}^N|\phi_i(t) - \psi(t)|^2 < \mathbb{E}\sum_{i=1}^N|\phi_i(0) - \psi(0)|^2 e^{-c t},  \quad \forall t>0,
\end{equation*}
where $\psi(t)= \frac{1}{N}\sum_{i=1}^N \phi_i(t)$ and 
\begin{equation}\label{thm:c}
c: =-2\bar c_{\mathcal{F}} +  {2}\epsilon \underbar c_{\mathcal{H}}  \lambda-\delta^2  \bar c_{\nc}^2 (1-\frac{1}{N}) \lambda_{N,[c^2]} - \sigma^2 \bar c_{\ino}^2. 
\end{equation}
In \eqref{thm:c}, if $ \underbar c_{\mathcal{H}}>0$, then $\lambda=\lambda_{2,[c]}$, otherwise, $\lambda=\lambda_{N,[c]}$.  $\lambda_{N,[c^2]}$ denotes the largest eigenvalue of the Laplacian matrix of network graph $\mathcal{G}$ with weights $c_{ij}^2$. 

Therefore, the network stochastically synchronizes (in the sense of \eqref{synch:moment:stability} with $p=2$) when $c>0$. 
\end{theorem}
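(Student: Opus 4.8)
The candidate synchronized trajectory is $s(t)=(\psi(t),\dots,\psi(t))^\top$ with $\psi=\frac1N\sum_{i}\phi_i$, so that $\|x(t)-s(t)\|^2=\sum_{i=1}^N|\phi_i(t)-\psi(t)|^2=:V(t)$ is precisely the quantity appearing in \eqref{synch:moment:stability}. I would first note that the claimed inequality is the whole content: once $\mathbb{E}[V(t)]\le\mathbb{E}[V(0)]e^{-ct}$ is established, taking $C=1$ and $p=2$ shows the network stochastically synchronizes, and the stochastic invariance of $\mathcal S$ needed for it to be a synchronization manifold is immediate because hypothesis (ii) forces $\mathcal H(x,x)=0$, hypothesis (iii) forces $\nc(x,x)=0$, and the common noise $\sigma\ino(\cdot,t)\,dW$ drives all units identically, so a solution started on $\mathcal S$ stays on $\mathcal S$ a.s.

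\textbf{Reduction to an It\^o estimate for $V$.} Put $e_i=\phi_i-\psi$. The key structural fact is that the deterministic coupling cancels in $d\psi$: since $c_{ij}=c_{ji}$ and $\mathcal H(x,y)=-\mathcal H(y,x)$, one has $\sum_{i,j}c_{ij}\mathcal H(\phi_j,\phi_i)=0$, so $d\psi$ retains only the averaged drift, the averaged common noise, and the averaged coupling noise. Subtracting and using $\sum_i e_i=0$ to discard the averaged drift terms, one obtains an SDE for $e_i$ whose drift involves $\mathcal F(\phi_i,t)-\mathcal F(\psi,t)$ and $\epsilon\sum_j c_{ij}\mathcal H(\phi_j,\phi_i)$, and whose diffusion has a common-noise coefficient $\sigma(\ino(\phi_i,t)-\overline{\ino})$ together with a coupling-noise part equal to the edge noises projected onto $\mathbf{1}^\perp$.

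\textbf{The It\^o estimate.} Applying It\^o's formula to $V=\sum_i e_i^2$: for the drift, $\sum_i e_i=0$ and hypothesis (i) with $x=\phi_i,\ y=\psi$ give $\sum_i 2e_i\big(\mathcal F(\phi_i,t)-\mathcal F(\psi,t)\big)\le 2\bar c_{\mathcal F}V$; symmetrizing $2\epsilon\sum_{i,j}c_{ij}e_i\mathcal H(\phi_j,\phi_i)$ over $i\leftrightarrow j$, substituting $e_i-e_j=\phi_i-\phi_j$, and using hypothesis (ii) turns it into $-\epsilon\underline{c}_{\mathcal H}\sum_{i,j}c_{ij}(\phi_i-\phi_j)^2=-2\epsilon\underline{c}_{\mathcal H}\,e^\top L_{[c]}e$, which, since $e\perp\mathbf{1}$, is at most $-2\epsilon\underline{c}_{\mathcal H}\lambda V$ with $\lambda=\lambda_{2,[c]}$ if $\underline{c}_{\mathcal H}>0$ and $\lambda=\lambda_{N,[c]}$ otherwise. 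For the It\^o correction, the common-noise part contributes $\sigma^2\sum_i\big(\ino(\phi_i,t)-\overline{\ino}\big)^2=\frac{\sigma^2}{2N}\sum_{i,j}\big(\ino(\phi_i,t)-\ino(\phi_j,t)\big)^2\le\sigma^2\bar c_{\ino}^2 V$ by hypothesis (iv), the variance identity, and $\sum_i e_i=0$; the coupling-noise part, after the projection onto $\mathbf{1}^\perp$ which is where the factor $(1-\frac1N)$ arises, is bounded via hypothesis (iii) and the Laplacian identity for the weights $c_{ij}^2$ by $\delta^2\bar c_{\nc}^2(1-\frac1N)\lambda_{N,[c^2]}V$. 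Summing, $dV\le -cV\,dt+dM_t$ with $c$ exactly as in \eqref{thm:c} and $M$ a local martingale.

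\textbf{Conclusion and main difficulty.} A routine localization (justified by the Lipschitz/linear-growth standing hypotheses, which yield finite second moments) permits taking expectations to get $\frac{d}{dt}\mathbb{E}[V(t)]\le -c\,\mathbb{E}[V(t)]$, whence Gr\"onwall gives $\mathbb{E}[V(t)]\le\mathbb{E}[V(0)]e^{-ct}$, i.e.\ \eqref{synch:moment:stability} with $p=2$ whenever $c>0$. The step I expect to be the main obstacle is the bookkeeping of the stochastic coupling term: one must track exactly which edge Wiener processes are independent, carry out the orthogonal projection onto the disagreement subspace $\mathbf{1}^\perp$ to produce the $(1-\frac1N)$ factor, and bound what remains against the Laplacian $L_{[c^2]}$ of the \emph{squared} weights rather than $L_{[c]}$. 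A secondary technical point is ensuring that $\phi_i(t)$ and $\psi(t)$ remain in $\Omega_1$ so that the incremental-dissipativity bounds (i)--(iii) apply along trajectories; this is automatic if $\Omega_1=\mathbb R$ or, more generally, if $\Omega_1$ is convex and forward invariant.
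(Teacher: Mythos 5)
Your proposal is correct and follows essentially the same route as the paper: the error coordinates $e_i=\phi_i-\psi$, the quadratic Lyapunov function, the It\^o/generator estimate using hypotheses (i)--(iv) with the Laplacian quadratic-form bounds (including the $(1-\tfrac1N)$ factor from the projection onto $\mathbf{1}^\perp$ and the Laplacian $L_{[c^2]}$ of the squared weights), and Dynkin plus Gr\"onwall to conclude. The only cosmetic differences are that you argue invariance of $\mathcal S$ pathwise from $\mathcal H(x,x)=\nc(x,x)=0$ where the paper invokes a Lyapunov-based invariance theorem, and you compare $\ino(\phi_i,t)$ to the average rather than to $\ino(\psi,t)$; both yield the same constant $c$.
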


\begin{proof}
The proof has three main steps:
\begin{description}[leftmargin=*]
\item[Step 1. Introducing a synchronization manifold. ]
Let  $(\phi_1,\ldots,\phi_N)^\top$ be a solution of  \eqref{eq:general-form2},  $\psi(t):=\frac{1}{N}\sum_{i=1}^N \phi_i(t)$ be the average of $\phi_i$'s, and 
$e_i:=\phi_i-\psi$ be the corresponding error. 
The dynamics of $(e_1,\ldots, e_N, \psi)$ can be written as:
\small{ 
\begin{align}
  \left(\begin{array}{c}d e_1 \\\vdots \\ d e_N\end{array}\right)
&=
  \left(\begin{array}{cccc}
1-\frac{1}{N} & -\frac{1}{N}  & \cdots & -\frac{1}{N}  \\
 &  \ddots&  &  \\
-\frac{1}{N} & -\frac{1}{N} & \cdots & 1-\frac{1}{N}
\end{array}\right)_{N\times N}
\left(
 \left(\begin{array}{c}\mathcal{F}(e_1+\psi \ZAA{,t}) \\\vdots \\\mathcal{F}(e_N+\psi \ZAA{,t})\end{array}\right)+
   \epsilon\left(\begin{array}{c}{H_1}(e\ZAA{,\psi}) \\\vdots \\{H_N}(e\ZAA{,\psi})\end{array}\right)
    \right)dt\label{eq:error:e2}\\
    \nonumber \\
    &  +  \left(\begin{array}{cccc}
1-\frac{1}{N} & -\frac{1}{N}  & \cdots & -\frac{1}{N}  \\
 &  \ddots&  &  \\
-\frac{1}{N} & -\frac{1}{N} & \cdots & 1-\frac{1}{N} 
\end{array}\right)_{N\times N} 
\left(\begin{array}{cccc}
K(e,\psi \ZAA{,t}) \;|\; C_1(e\ZAA{,\psi}) \;|\;\cdots\;|\; C_N(e\ZAA{,\psi})\end{array}\right)
\left(\begin{array}{c}d{W} \\d\mathbf{W}_{1} \\\vdots \\d\mathbf{W}_{N}\end{array}\right),\nonumber\\
    \nonumber \\
    d\psi&=\dfrac{1}{N}\sum_{i=1}^N\left(\mathcal{F}(e_i+\psi \ZAA{,t})+\epsilon {H}_i(e\ZAA{,\psi})\right)dt
    +\dfrac{\delta}{N}\sum_{i,j=1}^N c_{ji}\nc(e_j,e_i)dW_{ij}
    + \dfrac{\sigma}{N} \sum_{i=1}^N \ino(e_i+\psi \ZAA{,t})dW, \label{eq:error:psi2}
\end{align}
}
where for $i=1,\ldots, N$, 
\vskip-0.4in
\begin{align*}
H_i(e\ZAA{,\psi}) &= \epsilon \sum_{j=1}^N c_{ij}\mathcal{H} (\ZAA{e_j+\psi,e_i+\psi}),\\ 
{K}(e,\psi \ZAA{,t})&=\sigma(\ino(e_1+\psi \ZAA{,t})-\ino(\psi \ZAA{,t}), \ldots, \ino(e_N+\psi \ZAA{,t})-\ino(\psi \ZAA{,t}))^\top,
\end{align*}
and $C_i(e\ZAA{,\psi})$ is an ${N\times N}$ matrix which its $i-$th row is $\delta(c_{i1}\nc(\ZAA{e_1+\psi,e_i+\psi}), \ldots, c_{iN}\nc(\ZAA{e_N+\psi,e_i+\psi})$ and its other rows are zero row vectors, and $d\mathbf{W}_{i}= (dW_{i1}, \ldots, dW_{iN})^\top$ is an $N-$dimensional Wiener increment.  
We denote the  $N\times N$ matrix in \eqref{eq:error:e2} by $A$. 

 Let   $e=(e_1,\ldots,e_N)^\top$ and $\bs y = (e_1,\ldots,e_N, \psi)^\top$, and define 
 $V(\ZAA{\bs y,t}) = \frac{1}{2}e^\top e$.   Note that  the set of zeros of 
 $V$ is  
 \[\mathcal{S}:=\{(e_1,\ldots,e_N,\psi\ZAA{,t})^\top \in \Omega_2^N\times \Omega_1\ZAA{\times[0,\infty)}\;|\; e_1=\cdots=e_N=0\}.\] 
  This set is a candidate for the desired synchronization manifold. In the following two steps we show that if $c>0$, 
  \ZAA{then $V$ becomes a Lyapunov function where $\dot V(\bs y,t)\leq -c<0$.} Then \ZAA{we conclude that} $\mathcal{S}$ is an exponentially stable invariant set for \eqref{eq:error:e2}-\eqref{eq:error:psi2}  \ZAA{and therefore} it is the synchronization manifold. 

\item[Step 2. Invariance of the synchronization manifold.] 
Note that the It\^{o} derivative of $V$ is equal to 
\[dV(\ZAA{\bs y,t}) = \mathcal {L}V(\ZAA{\bs y,t}) dt+ V_{\ZAA{\bs y}}\ZAA{(\bs y,t)}^\top g(\ZAA{\bs y,t}) dW,\]
where 
\begin{align}\label{lyap:operator2}
\mathcal{L} V(\ZAA{\bs y,t}) := \ZAA{V_t(\bs y,t)+}V_{\ZAA{\bs y}}(\ZAA{\bs y,t})^\top f(\ZAA{\bs y,t}) +\frac{1}{2} \mathrm{tr}\left[g^\top(\ZAA{\bs y,t}) V_{\bs y\bs y}(\ZAA{\bs y,t}) g(\ZAA{\bs y,t})\right]. 
\end{align}
\ZAA{The $(N+1)-$dimensional vectors} $f(\ZAA{\bs y,t})$ and $g(\ZAA{\bs y,t})$ are respectively the drift and diffusion terms of \eqref{eq:error:e2}-\eqref{eq:error:psi2},  \ZAA{$V_t=\frac{\partial V}{\partial t}=0$, $V_{\bs y}=\frac{\partial V}{\partial \bs y}= (e^\top,0)^\top$,
and $V_{\bs y\bs y}(\bs y,t)$ is the $(N+1\times N+1)$ Hessian matrix of $V$ which is a diagonal matrix with all entries equal to 1 except the last diagonal entry which is equal to 0. }
 The trace operator is denoted by $\mathrm{tr}[\cdot]$. We show that there exists $c_{\mathcal L}>0$ such that $\mathcal{L} V\leq -c_{\mathcal L} V$. Then by  \cite[Theorem 1]{Stanzhitskii_2001} we conclude that $\mathcal{S}$ is an invariant set for \eqref{eq:error:e2}-\eqref{eq:error:psi2}. 
\begin{itemize}[leftmargin=*]
\item Because $e_1+\cdots+e_N=0$,  $e^\top A = e^\top$, and $e^\top \left(\begin{array}{c}\mathcal{F}(\psi\ZAA{,t}) \\\vdots \\\mathcal{F}(\psi\ZAA{,t})\end{array}\right) = 0$. Therefore, the second term of the right hand side of \eqref{lyap:operator2} becomes:
\begin{align*}
V_{\bs y}(\bs y,t)^\top f(\ZAA{\bs y,t}) &= (e^\top,0)^\top f(e,\psi\ZAA{,t})&\text{} \\
&= (e_1,\ldots,e_N) \left\{\left(\begin{array}{c}\mathcal{F}(e_1+\psi\ZAA{,t}) \\\vdots \\\mathcal{F}(e_N+\psi\ZAA{,t})\end{array}\right)+
\left(\begin{array}{c}\mathcal{F}(\psi\ZAA{,t}) \\\vdots \\\mathcal{F}(\psi\ZAA{,t})\end{array}\right)
  +\epsilon\left(\begin{array}{c}{H_1}(e\ZAA{,\psi}) \\\vdots \\{H_N}(e\ZAA{,\psi})\end{array}\right)\right\} &\text{}\\
&= \sum_{i=1}^N e_i(\mathcal{F}(e_i+\psi\ZAA{,t}) -\mathcal{F}(\psi\ZAA{,t})) +\epsilon \sum_{i=1}^N  e_i H_i(e\ZAA{,\psi}) &\text{}
\end{align*}
By condition (i) and the definition of $V$,  the first sum satisfies
\begin{align*}
\sum_{i=1}^N e_i(\mathcal{F}(e_i+\psi\ZAA{,t}) -\mathcal{F}(\psi\ZAA{,t}))&\leq \bar c_{\mathcal{F}} \sum_{i=1}^N  e_i^2= 2\bar c_{\mathcal{F}}  V(\bs y,t).
\end{align*}
By  condition (ii) and using $c_{ij} = c_{ji}$, the second sum satisfies
\begin{align*}
\epsilon \sum_{i=1}^N  e_i H_i(e\ZAA{,\psi}) &= \epsilon \sum_{i=1}^N  e_i \sum_{j=1}^N c_{ji}
\mathcal{H}(\ZAA{e_j+\psi, e_i+\psi})&\text{}\\
 &= \frac{\epsilon}{2} \sum_{i=1}^N  \sum_{j=1}^N c_{ji} (e_i\mathcal{H}(\ZAA{e_j+\psi, e_i+\psi}) + e_j\mathcal{H}(\ZAA{e_i+\psi, e_j+\psi}))&\text{}\\
  &= - \frac{\epsilon}{2} \sum_{i=1}^N  \sum_{j=1}^N c_{ji} (e_i-e_j)\mathcal{H}(\ZAA{e_i+\psi, e_j+\psi})&\text{\ZAA{condition (ii)}}\\
    &< - \frac{\epsilon}{2} \sum_{i=1}^N  \sum_{j=1}^N c_{ji}  \underbar c_{\mathcal{H}}(e_i-e_j)^2&\text{condition (ii)}\\
        &= - {\epsilon}  \underbar c_{\mathcal{H}}e^\top L_{[c]} e, 
\end{align*}
Since $e^\top v_1=0$, where $v_1=(1,\ldots,1)^\top$ is the eigenvector of $L_{[c]}$ corresponding to $\lambda_{1,[c]} =0$, by min-max theorem,  $\lambda_{2,[c]} e^\top e \leq e^\top L_{[c]} e \leq \lambda_{N,[c]} e^\top e$.  Therefore, depending on the sign of $ \underbar c_{\mathcal{H}}$, we have:
\begin{align*}
 {\epsilon} \sum_{i=1}^N  e_i H_i(e\ZAA{,\psi}) &< -   {\epsilon} \underbar c_{\mathcal{H}}e^\top L_{[c]} e \leq - {\epsilon}  \underbar c_{\mathcal{H}}\lambda_{2,[c]} e^\top e =- {2}\epsilon  \underbar c_{\mathcal{H}}\lambda_{2,[c]} V(\ZAA{\bs y,t})&  \text{for $ \underbar c_{\mathcal{H}}>0$, or}\\
 {\epsilon} \sum_{i=1}^N  e_i H_i(e\ZAA{,\psi}) &<-  {\epsilon}  \underbar c_{\mathcal{H}}e^\top L_{[c]} e \leq - {\epsilon} \underbar c_{\mathcal{H}}\lambda_{N,[c]} e^\top e=- {2}\epsilon  \underbar c_{\mathcal{H}}\lambda_{N,[c]} V(\ZAA{\bs y,t}) &  \text{for $ \underbar c_{\mathcal{H}}<0$}. 
\end{align*}
Therefore, 
$
V_{\bs y}(\bs y,t)^\top f(\ZAA{\bs y,t}) \leq  (2\bar c_{\mathcal{F}} - {2}\epsilon  \underbar c_{\mathcal{H}}\lambda )V(\bs y,t). 
$
\item A straightforward  matrix multiplication implies that the third term of $\mathcal{L}V$ satisfies: 
\begin{equation*}
\frac{1}{2} \mathrm{tr}\left[g^\top(\ZAA{\bs y,t}) V_{\bs y\bs y}(\bs y,t) g(\ZAA{\bs y,t})\right] 
 =\frac{\delta^2}{2} \left(1-\frac{1}{N}\right) \sum_{i=1}^N  \sum_{j=1}^N c_{ij}^2  
 \nc^2(\ZAA{e_j+\psi, e_i,\psi}) + {\frac{1}{2}  \|A{K}(e,\psi\ZAA{,t})\|^2},
\end{equation*}
where  by condition (iii)
\begin{align*}
\frac{\delta^2}{2} \left(1-\frac{1}{N}\right) \sum_{i=1}^N  \sum_{j=1}^N c_{ij}^2  \nc^2(\ZAA{e_j+\psi, e_i,\psi})
& \leq\frac{ \delta^2}{2} \left(1-\frac{1}{N}\right)  \bar c^2_{\nc}\sum_{i=1}^N  \sum_{j=1}^N c_{ij}^2 (e_j-e_i)^2&\text{}\\
& =\frac{ \delta^2}{2} \left(1-\frac{1}{N}\right) \bar c^2_{\nc}e^\top L_{[c^2]} e&\text{}\\
& \leq\frac{ \delta^2}{2} \left(1-\frac{1}{N}\right)  \bar c^2_{\nc}\lambda_{N,[c^2]} e^\top e&\text{}\\
& =  \delta^2\left(1-\frac{1}{N}\right)   \bar c^2_{\nc}\lambda_{N,[c^2]} V(\bs y,t).&\text{}
\end{align*}
{Using the fact that the largest eigenvalue of $A$ is equal to one and} by condition (iv)
\begin{align*}
{\frac{1}{2}  \|A{K}(e,\psi\ZAA{,t})\|^2 \leq \frac{1}{2}  \|{K}(e,\psi \ZAA{,t})\|^2 = }
 \frac{\sigma^2}{2}  \sum_{i=1}^N (\ino(e_i+\psi\ZAA{,t}) - \ino(\psi\ZAA{,t}))^2
 \;\leq\; \frac{\sigma^2}{2}  \sum_{i=1}^N \bar c_{\ino}^2 e_i^2
 \;=\; \sigma^2 \bar c_{\ino}^2 V(\bs y,t). 
\end{align*}
\end{itemize}

Therefor, $\mathcal{L} V(\bs y,t)\leq -c_{\mathcal L} V(\bs y,t)$ where $c_{\mathcal L} =c=-2\bar c_{\mathcal{F}} + {2}\epsilon  \underbar c_{\mathcal{H}}  \lambda-\delta^2  \bar c_{\nc}^2 (1-\frac{1}{N}) \lambda_{N,[c^2]} - \sigma^2 \bar c_{\ino}^2.$ If $c>0$ then $\mathcal{L} V \leq -c V<0,$ and by  \cite[Theorem 1]{Stanzhitskii_2001}, 
 $\mathcal{S}$ becomes an invariant set for \eqref{eq:error:e2}-\eqref{eq:error:psi2}.

\item[Step 3. Stability of the synchronization manifold.]
As we discussed in Step 2, the It\^{o} derivative of $V$ is $dV(\bs y,t) = \mathcal {L}(V(\bs y,t)) dt+ V_{\bs y}^\top g(\bs y,t) dW$. 
By Dynkin's formula, $\mathbb{E} \int_s^t V_{\bs y}(\bs y(\tau), \tau)^\top g(\bs y(\tau),\tau) dW(\tau)=0$, and 
\begin{align*}
\mathbb{E} V(\bs y(t),t) - \mathbb{E} V(\bs y(s),s) &= \mathbb{E} \int_s^t dV(\bs y(\tau),\tau)\;d\tau 
&\text{}\\
&= \mathbb{E} \int_s^t \mathcal{L}V(\bs y(\tau), \tau)\;d\tau + \mathbb{E} \int_s^tV_{\bs y}(\bs y(\tau),\tau)^\top g(\bs y(\tau),\tau) dW(\tau) 
&\text{}\\
&= \mathbb{E} \int_s^t \mathcal{L}V(\bs y(\tau), \tau)\;d\tau
&\text{Dynkin's formula,}\\
&\leq -c\; \mathbb{E} \int_s^t V(\bs y(\tau),\tau) \;d\tau&\text{Step 2,}\\
&\leq -c\;  \int_s^t \mathbb{E} V(\bs y(\tau),\tau) \;d\tau&\text{Fubini's Theorem}. 
\end{align*}
The last inequality holds because $\mathbb{E} V(\bs y(\tau),\tau)$ is a continuous function of $\tau$ and hence its integral on $[s,t]$ is finite. 
Let $h(t)= \mathbb{E} V(\bs y(t),t)$, then by Gronwall's Inequality, $h(t)-h(s)\leq -c \int_s^t h(\tau)\; d\tau$ implies that $h(t)\leq h(0) e^{-ct}$. Hence, 
\begin{align*}
\mathbb{E} V(\bs y(t),t) \leq \mathbb{E} V(\bs y(0),0) e^{-ct} \quad\Rightarrow\quad  \mathbb{E} \|e(t)\|^2 \leq  \mathbb{E} \|e(0)\|^2 \; e^{-ct}, 
\end{align*}
or equivalently,
\begin{align*}\label{proof:exponential:stability:synch}
&\qquad\mathbb{E} \sum_{i=1}^N |\phi_i(t)-\psi(t)|^2 \leq  \mathbb{E} \sum_{i=1}^N |\phi_i(0)-\psi(0)|^2 \; e^{-ct}.&\text{}
\end{align*}
If $c>0$, then $\mathcal{S}$ becomes exponentially stable.  
Therefore, by the definition of stochastic synchronization and Step 2, $\mathcal{S}$ becomes a synchronization manifold. 
\end{description}
\end{proof}

In Theorem \ref{thm:cond:synch2}, we showed that if a network synchronizes in the absence of any noises (common noise or noise induced by the interactions among the nodes in the network), it could also synchronize in the presence of a sufficiently small noise and we found an upper bound for the noise intensities which guarantee such a behavior, i.e., we proved that if the noise intensities are such that $c>0$, then the network preserves its synchronization behavior. 
Unlike in Theorem \ref{thm:cond:synch2}, in the following theorem, $c$ can be negative and the system synchronizes. Indeed, the next result shows that noise intensities can be beneficial for networks synchronization.

\begin{theorem}[\bit{Stochastic  synchronization: $p-$th moment exponential stability}]\label{thm:cond:synch3}
Consider  conditions (i-iv) of Theorem \ref{thm:cond:synch2} and furthermore assume that 
\begin{enumerate}[leftmargin=*]

\item  \ZAA{$\nc:\Omega_1\times\Omega_1\to\mathbb{R}$ satisfies
$\nc(x,y)= -\nc(y,x)$ (or $\nc(x,y)= \nc(y,x)$) and there exists a constant $ \underbar c_{\nc}$  such that for all $x,y\in \Omega_1$, 
$\underbar{c}_{\nc} |x-y| \leq |\nc(x,y)|$;} and

\item there exists a  non-negative constant $\underbar{c}_{\ino}$ such that for all $x,y\in\Omega_1$ \ZAA{and $t\geq0$}, 
\[\underbar{c}_{\ino} (x-y)^2\leq (x-y) (\ino(x\ZAA{,t})-\ino(y\ZAA{,t})).\]
\end{enumerate}
 Let  
 \[\alpha_1=-c/2, \quad \alpha_2^2 = (\sigma\underbar{c}_{\ino})^2 + \frac{\delta^2\underbar{c}_{\nc}^2\lambda_{2,[c]}^2}{12},\]
 and assume that $0\leq  \alpha_1 < \alpha_2^2$. 
 Then for $0<p<2(1-\frac{\alpha_1}{\alpha_2^2})\leq2$ 
  and $\alpha:=-p[(\frac{p}{2}-1)\alpha_2^2-\alpha_1]$ (which is positive), 
 \eqref{eq:general-form2} stochastically synchronizes, i.e.,
 \begin{align*}
\mathbb{E} \left(\sum_{i=1}^N |\phi_i(t)-\psi(t)|^2\right)^{p/2}\leq  \mathbb{E} \left(\sum_{i=1}^N |\phi_i(0)-\psi(0)|^2\right)^{p/2} \; e^{-\alpha t}.&\text{}
\end{align*} 
\end{theorem}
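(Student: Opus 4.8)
The plan is to rerun the error–coordinate/Lyapunov argument of Theorem~\ref{thm:cond:synch2}, but applied to the \emph{concave} function $U(\bs y):=(e^\top e)^{p/2}=\|e\|^{p}$ with $0<p<2$. The mechanism is this: since $z\mapsto z^{p/2}$ is strictly concave when $p<2$, in the generator $\mathcal L U$ the It\^{o} (second–order) correction produced by the diffusion enters with a \emph{negative} sign and is proportional to the quadratic variation of $\|e\|^2$. Hence, even when the drift alone does not synchronize the network (i.e.\ when $c\le 0$, so that $\alpha_1=-c/2\ge 0$), a sufficiently non-degenerate diffusion can still make $\mathcal L U$ strictly negative; this is precisely the sense in which the noise intensities become beneficial.

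Concretely, I would keep the error system \eqref{eq:error:e2}--\eqref{eq:error:psi2}, set $Z:=e^\top e=2V$, and record that $dZ=\mathcal L Z\,dt+2e^\top g_e\,dW$, where $g_e$ is the diffusion block acting on the $e$–coordinates, so that the quadratic variation density of $Z$ equals $4\|g_e^\top e\|^2$. Conditions (i)--(iv) of Theorem~\ref{thm:cond:synch2}, used exactly as in its Step~2, already give the one–sided drift bound $\mathcal L Z\le 2\alpha_1 Z$ with $\alpha_1=-c/2$. The new ingredient is a matching \emph{lower} bound on $\|g_e^\top e\|^2$. Since $e\perp\mathbf{1}$ we have $Ae=e$, so the common–noise and coupling–noise contributions decouple and
\[
\|g_e^\top e\|^2=\Bigl(\sigma\sum_{i}e_i\bigl(\ino(e_i+\psi,t)-\ino(\psi,t)\bigr)\Bigr)^2+\delta^2\sum_{i}e_i^2\sum_{j}c_{ij}^2\,\nc^2(e_j+\psi,e_i+\psi).
\]
Condition (vi) gives $\sum_i e_i(\ino(e_i+\psi,t)-\ino(\psi,t))\ge\underbar{c}_{\ino}\|e\|^2$, so the first term is $\ge(\sigma\underbar{c}_{\ino})^2\|e\|^4$; condition (v) gives $\nc^2(e_j+\psi,e_i+\psi)\ge\underbar{c}_{\nc}^2(e_i-e_j)^2$ (``$\nc$ symmetric or antisymmetric'' is used only through $\nc^2(x,y)=\nc^2(y,x)$, which together with $c_{ij}=c_{ji}$ lets one symmetrize the double sum). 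The coupling term then reduces to estimating $\sum_i e_i^2\sum_j c_{ij}^2(e_i-e_j)^2$ against $\|e\|^4$ over $\{e\perp\mathbf{1}\}$ via the Rayleigh bound $e^\top L_{[c]}e\ge\lambda_{2,[c]}\|e\|^2$, which yields $\|g_e^\top e\|^2\ge\alpha_2^2 Z^2$ with $\alpha_2^2=(\sigma\underbar{c}_{\ino})^2+\tfrac{1}{12}\delta^2\underbar{c}_{\nc}^2\lambda_{2,[c]}^2$.

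With both estimates in hand, It\^{o}'s formula for $U=Z^{p/2}$ gives
\[
\mathcal L U=\tfrac{p}{2}Z^{p/2-1}\mathcal L Z+p\bigl(\tfrac{p}{2}-1\bigr)Z^{p/2-2}\|g_e^\top e\|^2\le\Bigl(p\alpha_1+p\bigl(\tfrac{p}{2}-1\bigr)\alpha_2^2\Bigr)U=-\alpha\,U,
\]
the middle inequality using $\mathcal L Z\le 2\alpha_1 Z$, $\|g_e^\top e\|^2\ge\alpha_2^2 Z^2$, and crucially the sign $p(\tfrac{p}{2}-1)<0$; the constant $\alpha$ is positive precisely under the stated hypotheses $0\le\alpha_1<\alpha_2^2$ and $0<p<2(1-\alpha_1/\alpha_2^2)$. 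From $\mathcal L U\le-\alpha U$ the conclusion follows verbatim as in Step~3 of Theorem~\ref{thm:cond:synch2}: Dynkin's formula removes the martingale part, and Fubini together with Gronwall's inequality give $\mathbb{E}\,U(\bs y(t),t)\le\mathbb{E}\,U(\bs y(0),0)\,e^{-\alpha t}$, i.e.\ $\mathbb{E}(\sum_i|\phi_i(t)-\psi(t)|^2)^{p/2}\le\mathbb{E}(\sum_i|\phi_i(0)-\psi(0)|^2)^{p/2}e^{-\alpha t}$.

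I expect the main obstacle to be the quartic spectral estimate $\sum_i e_i^2\sum_j c_{ij}^2(e_i-e_j)^2\ge\tfrac{1}{12}\lambda_{2,[c]}^2\|e\|^4$ on $\{e\perp\mathbf{1}\}$: unlike the single–quadratic–form inequalities used in Theorem~\ref{thm:cond:synch2}, this mixes a quartic with the algebraic connectivity, and it is what produces the constant $\tfrac{1}{12}$. A secondary technical point is that $z\mapsto z^{p/2}$ is not $C^2$ at the origin, so It\^{o}'s formula should be applied on the stopped intervals $[0,t\wedge\tau_k]$ with $\tau_k=\inf\{t:\|e(t)\|\le 1/k\}$ and the estimate passed to the limit $k\to\infty$ via Fatou, the boundary case $e(0)=0$ being covered by the invariance of $\mathcal S$.
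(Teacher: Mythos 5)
Your proposal is essentially the paper's own argument with its black box opened: the paper verifies the two hypotheses of Lemma~\ref{lemma:cond:synch3} (a specialization of Mao's Corollary 4.6) — the drift bound $\mathcal L V\le -cV$ carried over from Theorem~\ref{thm:cond:synch2}, giving $\alpha_1=-c/2$, and the nondegeneracy bound $\|e^\top g\|^2\ge\alpha_2^2(e^\top e)^2$ — by exactly the chain you describe ($e^\top A=e^\top$ to decouple the two noise sources, $\ino(\psi,t)\sum_i e_i=0$ together with condition (ii) for the common-noise term, symmetrization via $c_{ij}=c_{ji}$ and $\nc^2(x,y)=\nc^2(y,x)$ plus $e_i^2+e_j^2\ge(e_i-e_j)^2/2$ for the coupling term, then Cauchy--Schwarz and the Rayleigh bound $e^\top L_{[c]}e\ge\lambda_{2,[c]}\,e^\top e$). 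Unfolding the lemma by applying It\^{o}'s formula to $(e^\top e)^{p/2}$ with a localization away from the origin is precisely how that corollary is proved, so nothing is gained or lost by doing it inline.

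Two remarks. First, the step you single out as the crux — the quartic estimate with the specific constant $\tfrac{1}{12}$ — is indeed not established by your sketch: Cauchy--Schwarz gives $\sum_{i,j}\bigl(c_{ij}(e_i-e_j)^2\bigr)^2\ge\frac{1}{M}\bigl(\sum_{i,j}c_{ij}(e_i-e_j)^2\bigr)^2$ with $M$ the number of nonzero terms, so the resulting constant is graph-dependent; the paper's own computation lands on $\lambda_{2,[c]}^2/(4N)$ rather than the $\lambda_{2,[c]}^2/12$ of the statement (these agree only for $N=3$), so this discrepancy is inherited from the paper rather than introduced by you, but it does need to be resolved before the stated $\alpha_2$ can be claimed. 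Second, your generator bound yields the decay rate $\alpha=-p\bigl[\alpha_1+(\tfrac{p}{2}-1)\alpha_2^2\bigr]$, which differs from the stated $\alpha=-p\bigl[(\tfrac{p}{2}-1)\alpha_2^2-\alpha_1\bigr]$ in the sign of the $\alpha_1$ term. Yours is the internally consistent one: it is positive exactly when $p<2(1-\alpha_1/\alpha_2^2)$, whereas the stated formula would be positive for every $p\in(0,2)$ and would render that restriction on $p$ vacuous. Treat the statement's $\alpha$ as a typo and keep your version.
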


To prove Theorem \ref{thm:cond:synch3}, we use the following lemma which is a modified version of \cite[Chapter 4, Corollary 4.6]{Mao_book}.

\begin{lemma}\label{lemma:cond:synch3}
Consider $dx= f(x\ZAA{,t}) dt +g(x\ZAA{,t})dW$ and assume that there exist constants  $\alpha_1$ and $\alpha_2$ such that \ZAA{for any $t\geq0$,}
\begin{align}
&x^\top f(x\ZAA{,t}) +\frac{1}{2} \mathrm{tr}[g^\top(x\ZAA{,t}) g(x\ZAA{,t})] \;\leq\; \alpha_1 x^\top x, \quad\text{and}\label{eq1:lemma:cond:synch3}\\
&\alpha_2 x^\top x \leq \|x^\top g\ZAA{(x,t)}\|\label{eq2:lemma:cond:synch3}.  
\end{align}
If $0\leq \alpha_1<\alpha_2^2$, then the trivial solution of $dx= f(x\ZAA{,t}) dt +g(x\ZAA{,t})dW$ is $p-$th moment exponentially stable provided $0<p<2(1-\frac{\alpha_1}{\alpha_2^2})\leq2$ 
  and $\alpha:=-p[(\frac{p}{2}-1)\alpha_2^2-\alpha_1]>0$, i.e., $\forall t>0$
\begin{equation*}
\mathbb{E} \|x(t)\|^p < \mathbb{E} \|x(0)\|^p e^{-\alpha t}.
\end{equation*}
\end{lemma}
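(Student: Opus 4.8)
The plan is to use the Lyapunov function $V(x) := \|x\|^p = (x^\top x)^{p/2}$: I would show that along $dx = f(x,t)\,dt + g(x,t)\,dW$ the generator obeys the pointwise decay estimate $\mathcal{L}V(x,t) \le -\alpha\,V(x)$ for every $x\neq 0$ and $t\ge 0$, and then convert this into the stated moment bound by a localized It\^o/Gronwall argument. This is exactly the route of \cite[Chapter~4, Corollary~4.6]{Mao_book}; the only ``modification'' is that $f$ and $g$ here depend on $t$, which is harmless because \eqref{eq1:lemma:cond:synch3}--\eqref{eq2:lemma:cond:synch3} are assumed uniformly in $t$, together with the explicit bookkeeping of the rate $\alpha$.

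\emph{Step 1 (generator estimate).} On $\{x\neq 0\}$, where $V$ is $C^2$, from $\partial_{x_i}V = p\,x_i\|x\|^{p-2}$ and $\partial^2_{x_ix_j}V = p\,\delta_{ij}\|x\|^{p-2} + p(p-2)\,x_ix_j\|x\|^{p-4}$ I would obtain
\[
\mathcal{L}V(x,t) = p\,\|x\|^{p-2}\Big(x^\top f(x,t) + \tfrac12\,\mathrm{tr}\big[g^\top(x,t)g(x,t)\big]\Big) + \tfrac{p(p-2)}{2}\,\|x\|^{p-4}\,\big\|x^\top g(x,t)\big\|^2 .
\]
The decisive point is that $0 < p < 2\big(1-\alpha_1/\alpha_2^2\big) \le 2$ forces $p-2<0$, so the It\^o-correction term (the second summand) is \emph{negative}; hence the \emph{lower} bound \eqref{eq2:lemma:cond:synch3} on $\|x^\top g\|$ is exactly what is needed to bound it from above, while \eqref{eq1:lemma:cond:synch3} bounds the first summand. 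Substituting both gives
\[
\mathcal{L}V(x,t) \;\le\; p\Big(\alpha_1 + \tfrac{p-2}{2}\,\alpha_2^2\Big)\|x\|^{p} \;=\; -\alpha\,V(x),
\]
and the constraint $p < 2\big(1-\alpha_1/\alpha_2^2\big)$ is precisely what makes the bracket negative, i.e.\ $\alpha>0$.

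\emph{Step 2 (moment bound).} Because $V$ is not $C^2$ at the origin when $p<2$, I would localize: for a solution $x$ with $\mathbb{E}\|x(0)\|^p<\infty$, put $\tau_k := \inf\{t\ge 0: \|x(t)\|\notin(1/k,k)\}$. On $[0,t\wedge\tau_k]$ the process stays where $V$, $\nabla V$ and $g$ are bounded, so applying It\^o's formula to $s\mapsto e^{\alpha s}V(x(s))$ and using $\alpha V + \mathcal{L}V \le 0$ from Step 1 makes the drift non-positive and the stochastic integral a true martingale, whence $\mathbb{E}\big[e^{\alpha(t\wedge\tau_k)}V(x(t\wedge\tau_k))\big] \le \mathbb{E}\,V(x(0))$. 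Letting $k\to\infty$ — $\tau_k$ increases to the first hitting time of $0$ (no explosion under the usual regularity assumptions guaranteeing a unique global solution), after which the solution is $\equiv 0$ by uniqueness with $V\equiv 0$ there, and $V$ is continuous with $V(0)=0$ — Fatou's lemma gives $e^{\alpha t}\,\mathbb{E}\,V(x(t)) \le \mathbb{E}\,V(x(0))$, i.e.\ $\mathbb{E}\|x(t)\|^p \le \mathbb{E}\|x(0)\|^p e^{-\alpha t}$ for all $t>0$. (Equivalently, one can reproduce the Dynkin-plus-Gronwall argument used in the proof of Theorem~\ref{thm:cond:synch2}.)

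I expect the only real obstacle to be the lack of $C^2$-regularity of $\|x\|^p$ at $x=0$ for $p<2$: the generator computation and the use of \eqref{eq1:lemma:cond:synch3}--\eqref{eq2:lemma:cond:synch3} are routine, but the localization in Step 2 must be set up so that the origin contributes nothing in the limit $k\to\infty$ and the local-martingale term genuinely vanishes. Since this is precisely the technical content of \cite[Chapter~4]{Mao_book}, I would invoke that argument rather than reproduce it in full.
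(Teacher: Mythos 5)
Your overall strategy is exactly the right one, and it is the same argument as the source the paper cites in place of a proof (\cite[Ch.~4]{Mao_book}): take $V(x)=\|x\|^p$, note that for $0<p<2$ the It\^o correction term $\tfrac{p(p-2)}{2}\|x\|^{p-4}\|x^\top g\|^2$ is negative so that the \emph{lower} bound \eqref{eq2:lemma:cond:synch3} is the useful one, combine with \eqref{eq1:lemma:cond:synch3}, and localize away from the origin where $V$ fails to be $C^2$. The generator computation is correct, and the localization in Step~2 is set up appropriately (for the limit $k\to\infty$ one can also invoke the standard fact that, since $f(\cdot,t)$ and $g(\cdot,t)$ vanish at the origin, a solution with $x(0)\neq 0$ a.s.\ never reaches $0$, so $\tau_k\to\infty$).

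There is, however, one concrete discrepancy you pass over silently. Your estimate gives
$\mathcal{L}V \le p\bigl(\alpha_1+\tfrac{p-2}{2}\alpha_2^2\bigr)\|x\|^p$, i.e.\ a decay rate of $p\bigl[(1-\tfrac{p}{2})\alpha_2^2-\alpha_1\bigr] = -p\bigl[(\tfrac{p}{2}-1)\alpha_2^2+\alpha_1\bigr]$, whereas the lemma defines $\alpha:=-p\bigl[(\tfrac{p}{2}-1)\alpha_2^2-\alpha_1\bigr]$. These differ by $2p\alpha_1$, so your displayed identity ``$p(\alpha_1+\tfrac{p-2}{2}\alpha_2^2)=-\alpha$'' is false whenever $\alpha_1>0$, and your argument does not deliver the rate $\alpha$ as written in the statement. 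The evidence strongly suggests the sign error is in the statement rather than in your computation: as you yourself observe, the hypothesis $p<2(1-\alpha_1/\alpha_2^2)$ is \emph{exactly} equivalent to positivity of the rate $p[(1-\tfrac{p}{2})\alpha_2^2-\alpha_1]$ that your calculation produces, whereas the paper's $\alpha$ would be automatically positive under $0\le\alpha_1<\alpha_2^2$ and $0<p<2$, rendering that hypothesis vacuous. You should make this explicit — either prove the lemma with the corrected rate $\alpha=p\bigl[(1-\tfrac{p}{2})\alpha_2^2-\alpha_1\bigr]$ and flag the typo, or accept the weaker conclusion; as it stands, the single line equating your bound with $-\alpha V$ is the one step of the proof that does not hold.
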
 

\bit{Proof of Theorem \ref{thm:cond:synch3}. } 
Under the conditions of Theorems \ref{thm:cond:synch2} and \ref{thm:cond:synch3}, we apply Lemma \ref{lemma:cond:synch3} to \eqref{eq:error:e2}. The left hand side of \eqref{eq1:lemma:cond:synch3} is equivalent to $\mathcal L(V(\bs y,t))$ which we showed $\mathcal L(V(\bs y,t))\leq -c V(\bs y,t)=-\frac{c}{2} e^\top e$. Therefore, $\alpha_1= -c/2$. 
A straightforward matrix multiplications gives:
\small{
\begin{align*}
 \|e^\top g\|^2 &=\|e^\top  A\left(\begin{array}{cccc}K(e,\psi,t) \;|\; C_1(e,\psi) \;|\;\cdots\;|\; C_N(e,\psi)\end{array}\right)\|^2&\text{}\\ 
 &=\|e^\top \left(\begin{array}{cccc}K(e,\psi,t) \;|\; C_1(e,\psi) \;|\;\cdots\;|\; C_N(e,\psi)\end{array}\right)\|^2&\text{$e^\top A=e^\top$}\\ 
   &=\sigma^2 \left(\sum_{i=1}^N e_i\ino(e_i+\psi,t)\right)^2
    + \delta^2 \sum_{i=1}^Ne_i^2 \sum_{j=1}^N c_{ij}^2 \nc^2(e_j+\psi,e_i+\psi)&\text{}\\
  &=\sigma^2\left(\sum_{i=1}^N (e_i+\psi - \psi)(\ino(e_i+\psi,t)-\ino(\psi,t))\right)^2 &\text{$\ino(\psi)\sum e_i=0$}\\
  &\quad+ \frac{\delta^2}{2}\sum_{i,j=1}^N c_{ij}^2(e_i^2+e_j^2) \nc^2(e_i+\psi,e_j+\psi)&\text{$\nc(x,y)=\pm\nc(y,x)$ }\\
  &\geq (\sigma\underbar{c}_{\ino})^2\left(\sum_{i=1}^N e_i^2\right)^2&\text{conditions (ii) \& (i) of Theorem \ref{thm:cond:synch3}}\\
 &\quad+ \frac{\delta^2}{2} \frac{\underbar c_{\nc}^2}{2} \sum_{i,j=1}^N c_{ij}^2(e_i-e_j)^4&\text{and $e_i^2+e_j^2 \geq \frac{(e_i-e_j)^2}{2}$} \\ 
 &\geq\left((\sigma\underbar{c}_{\ino})^2 + \frac{\delta^2}{2} \frac{\underbar c_{\nc}^2}{2}\frac{ \lambda_{2,[c]}^2}{{N}}\right)(e^\top e)^2.
  \end{align*}
  }
The last inequality holds because
$\sum c_{ij}^2(e_i-e_j)^4 = \sum (c_{ij}(e_i-e_j)^2)^2\geq \frac{1}{{N}} \left(\sum c_{ij}(e_i-e_j)^2\right)^2.$
Therefore, $\alpha_2^2 = (\sigma\underbar{c}_{\ino})^2 + \dfrac{\delta^2\underbar{c}_{\nc}^2\lambda_{2,[c]}^2}{{4N}}$. By Lemma \ref{lemma:cond:synch3}, for $p<2-2{\alpha_1}/{\alpha_2^2}$, 
 \begin{align*}
\mathbb{E} \left(\sum_{i=1}^N |\phi_i(t)-\psi(t)|^2\right)^{p/2}\leq  \mathbb{E} \left(\sum_{i=1}^N |\phi_i(0)-\psi(0)|^2\right)^{p/2} \; e^{-\alpha t},&\text{}
\end{align*}
where  $\alpha=p\alpha_1 -p(p/2-1)\alpha_2^2$.
\qed

\begin{theorem}[\bit{Stochastic  synchronization: almost sure exponential stability}]\label{thm:cond:synch4}
Consider conditions (i-iv) of Theorem \ref{thm:cond:synch2} and conditions (i-ii) of Theorem \ref{thm:cond:synch3}. Then 
\[\limsup_{t\to\infty}\dfrac{1}{t}\;\log(\|e(t)\|) \leq \alpha_1-\alpha_2^2, \quad a.s.\]

\end{theorem}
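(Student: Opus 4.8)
The plan is to run the standard logarithmic-Lyapunov argument for almost sure exponential stability on the error equation \eqref{eq:error:e2}, re-using the two deterministic estimates already established in the proofs of Theorems~\ref{thm:cond:synch2} and \ref{thm:cond:synch3}. Let $(\phi_1,\dots,\phi_N)^\top$ be a solution, let $e=(e_1,\dots,e_N)^\top$ and $\psi$ be as in the proof of Theorem~\ref{thm:cond:synch2}, and set $V:=\|e\|^2=e^\top e$. We may assume $V(0)>0$, since otherwise $e\equiv 0$ by invariance and the claim is trivial; and, as in \cite[Chapter~4]{Mao_book}, the nondegeneracy estimate $\|e^\top g\|\ge\alpha_2 V$ with $\alpha_2>0$, derived in the proof of Theorem~\ref{thm:cond:synch3}, guarantees that $V(t)>0$ for all $t\ge0$ almost surely, so that $\log V(t)$ is a well-defined semimartingale.

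Writing $f$ and $g$ for the drift and diffusion of \eqref{eq:error:e2}, It\^o's formula gives
\begin{equation*}
\log V(t)=\log V(0)+\int_0^t\frac{2e^\top f+\mathrm{tr}[g^\top g]}{V}\,ds-\int_0^t\frac{2\|e^\top g\|^2}{V^2}\,ds+M(t),\qquad M(t):=\int_0^t\frac{2e^\top g}{V}\,dW.
\end{equation*}
The first integrand equals $2/V$ times the left-hand side of \eqref{eq1:lemma:cond:synch3}, which the proof of Theorem~\ref{thm:cond:synch3} bounds by $\alpha_1 V$ (this is exactly the estimate $\mathcal LV\le-cV$ with $\alpha_1=-c/2$), so the first integrand is at most $2\alpha_1$; the second integrand is at least $2\alpha_2^2$ by the bound $\|e^\top g\|^2\ge\alpha_2^2 V^2$ established there. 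Hence
\begin{equation*}
\log V(t)\le \log V(0)+2(\alpha_1-\alpha_2^2)\,t+M(t),\qquad t\ge0.
\end{equation*}

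It therefore suffices to show $\limsup_{t\to\infty}M(t)/t\le 0$ almost surely. Conditions (iii) and (iv) of Theorem~\ref{thm:cond:synch2} make $g$ grow at most linearly in $e$: its common-noise block is bounded entrywise by $\sigma\bar c_{\ino}|e_i|$ and its coupling-noise blocks by $\delta c_{ij}\bar c_{\nc}|e_i-e_j|$, with the dependence on $\psi$ cancelling. Hence there is a constant $K$, depending only on $\sigma,\delta,\bar c_{\ino},\bar c_{\nc}$ and the weights $c_{ij}$, with $\|e^\top g\|^2\le K\,V^2$, so the integrand of $M$ is bounded and $\langle M\rangle_t=\int_0^t 4\|e^\top g\|^2/V^2\,ds\le 4K t$ for all $t$; in particular $M$ is a genuine $L^2$ martingale. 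By the strong law of large numbers for local martingales (via the time change $M(t)=B(\langle M\rangle_t)$ for a Brownian motion $B$ and $\sup_{s\le 4Kt}|B(s)|/t\to0$ a.s. from the law of the iterated logarithm), $\lim_{t\to\infty}M(t)/t=0$ a.s. Dividing the last display by $t$ and letting $t\to\infty$ yields $\limsup_{t\to\infty}\frac1t\log\|e(t)\|^2\le 2(\alpha_1-\alpha_2^2)$, i.e. $\limsup_{t\to\infty}\frac1t\log\|e(t)\|\le\alpha_1-\alpha_2^2$ a.s.; when $\alpha_1<\alpha_2^2$ this exponent is negative, so $\mathcal S$ is almost surely exponentially stable.

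I expect the only delicate points to be the two probabilistic facts rather than any new algebra: first, the uniform-in-$\omega$ bound $\langle M\rangle_t\le Kt$, where one must genuinely verify that the $\psi$-dependence drops out of $g$ so that the upper Lipschitz-type bounds (iii)--(iv) of Theorem~\ref{thm:cond:synch2} apply; and second, the nonvanishing of $\|e(t)\|$, needed to make $\log\|e(t)\|$ legitimate. Both follow from the arguments in \cite[Chapter~4]{Mao_book}; everything else is a direct transcription of estimates already carried out in the proofs of Theorems~\ref{thm:cond:synch2}--\ref{thm:cond:synch3}.
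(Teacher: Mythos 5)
Your argument is correct and is essentially the same as the paper's: the paper's entire proof is the citation of \cite[Chapter~4, Theorem~4.3]{Mao_book}, and what you have written is precisely the proof of that theorem (logarithmic Lyapunov function, drift bounded by $2\alpha_1$, quadratic-variation correction bounded below by $2\alpha_2^2$, and a strong law of large numbers for the martingale term), instantiated with the estimates already derived for $\alpha_1$ and $\alpha_2$ in the proofs of Theorems~\ref{thm:cond:synch2} and~\ref{thm:cond:synch3}. One minor imprecision: the a.s.\ non-vanishing of $\|e(t)\|$ for $e(0)\neq 0$ follows from the lemma in \cite[Chapter~4]{Mao_book} that uses only the local Lipschitz/linear-growth properties of the coefficients (which vanish on $e=0$), not from the nondegeneracy bound $\|e^\top g\|\ge\alpha_2 V$ as you suggest; the rest is fine.
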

\begin{proof}
Proof by \cite[Chapter 4, Theorem 4.3]{Mao_book}. 
\end{proof} 

\begin{remark}Consider a deterministic network which does not synchronize, i.e., $2\bar c_{\mathcal{F}} - {2}\epsilon  \underbar c_{\mathcal{H}}  \lambda\geq0$.  Theorem \ref{thm:cond:synch4} guarantees  that for $\bar {c}_{\ino}^2<2\underbar{c}_{\ino}^2$,  a common noise, with sufficiently large  intensity, forces  the network  to synchronize.
\end{remark}

{
\begin{example}
Consider the following variation of the leader-follower oscillator.
\[
d \phi_i = \sin(r\phi_i - \phi_i^3)dt + \sigma \sin(\phi_i) dW +  \sum_{j=1}^N c_{ij} \left(\epsilon\sin(\phi_j -\phi_i) dt +  \delta \sin(\phi_j -\phi_i)  dW_{ij}\right),
\]
where {$\epsilon$} is the coupling strength, {$c_{ij}$}  is unity if oscillator $i$ and $j$ are neighbors in the interaction graph, otherwise it is zero, $\sigma\ge 0$ is the common noise strength,  {$\delta>0$}  is the coupling noise intensity, and $sin(\phi_i) dW$ is the added common noise.  Consider three particles interacting on a line graph and select parameters $\epsilon=1.3$, $r=5$, and $\delta=0.1$. Figure~\ref{fig:example1} shows the evolution of state and the norm of disagreement vector in the state for $\sigma=\{0,1\}$. Disagreement is computed by $\bs e = L \bs \phi$, where $L$ is the Laplacian matrix and $\bs \phi$ is the vector of $\phi_i$. 

\begin{figure}
	\subfigure[$\sigma=0$]{
\includegraphics[width=0.23\textwidth]{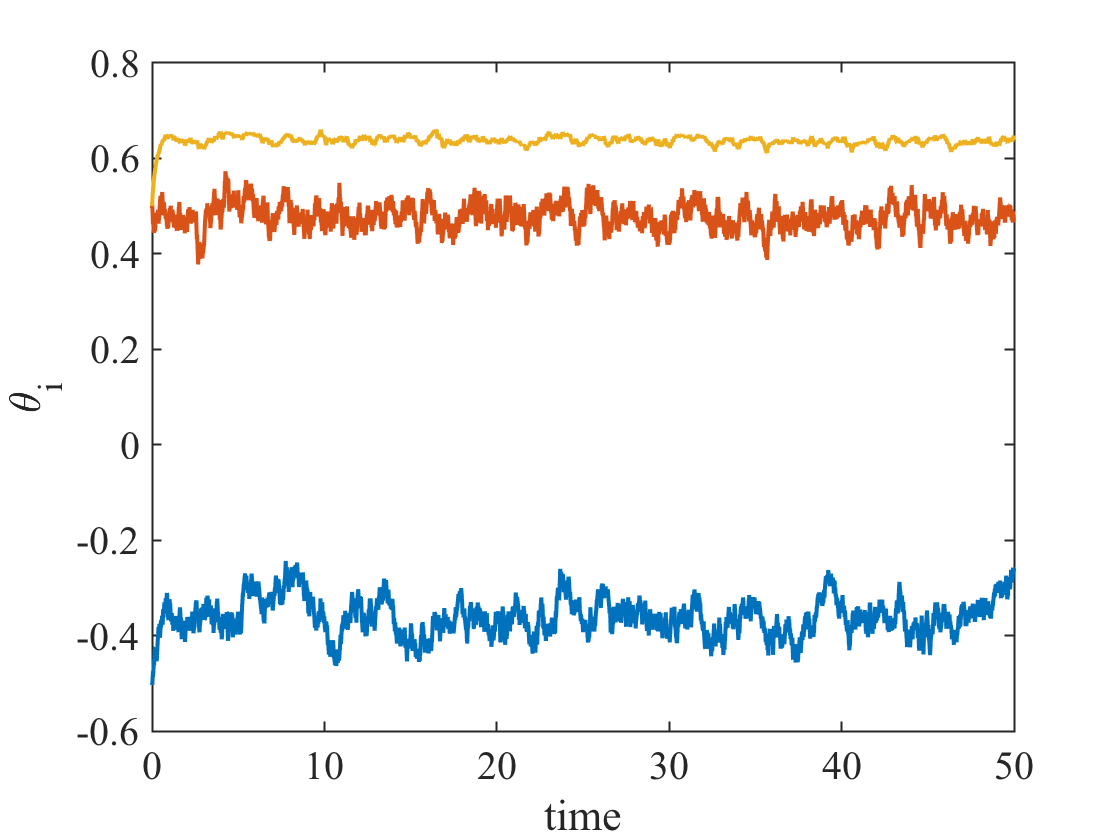}	
\includegraphics[width=0.23\textwidth]{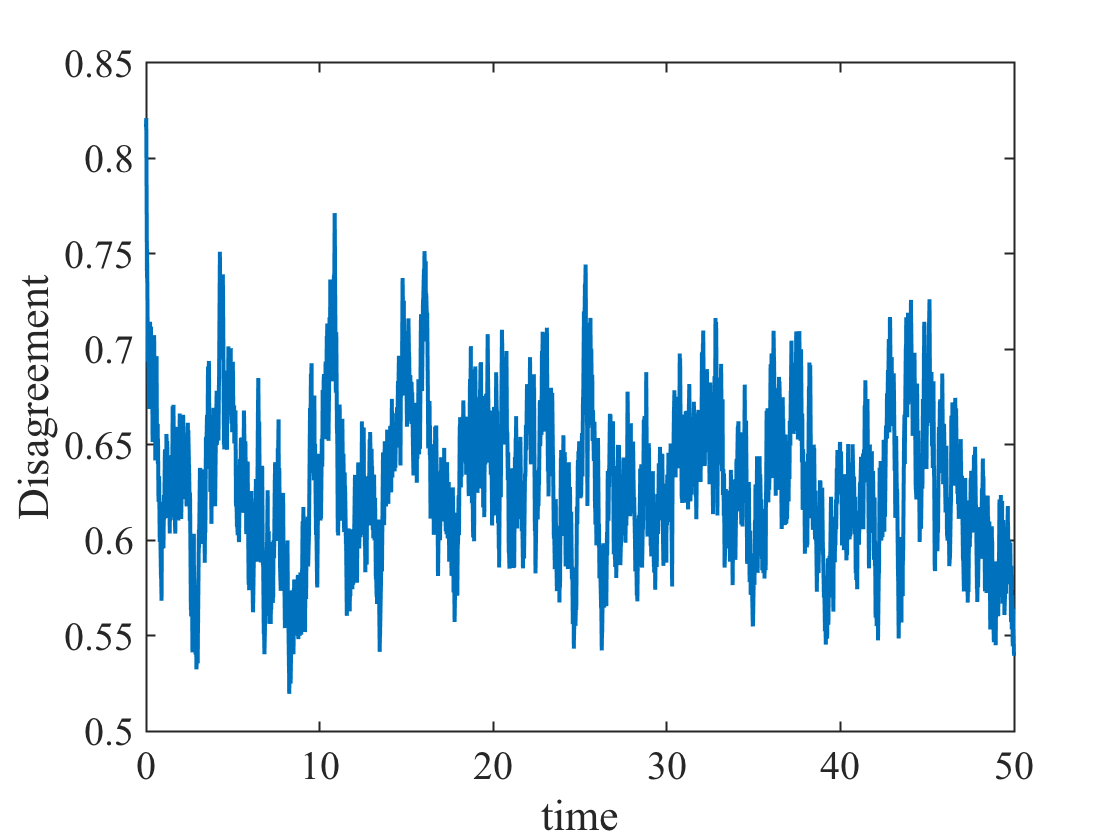}	
}
	\subfigure[$\sigma=1$]{
	\includegraphics[width=0.23\textwidth]{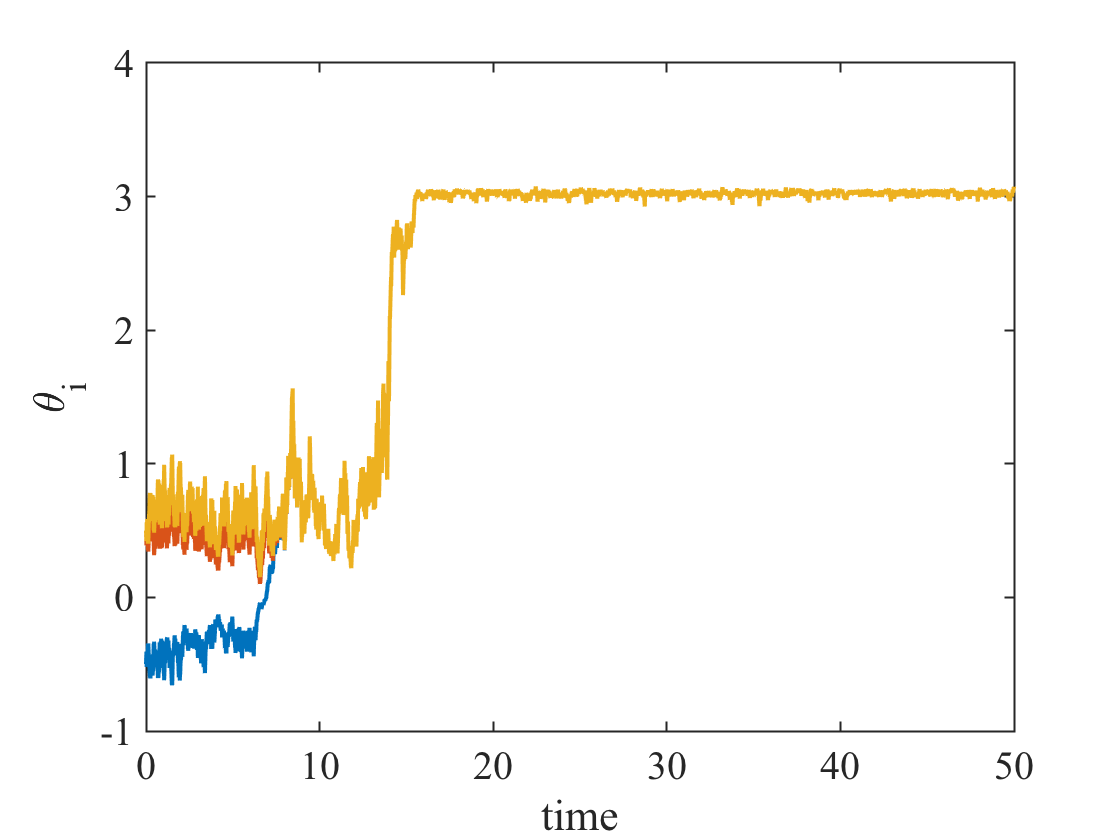}	
	\includegraphics[width=0.23\textwidth]{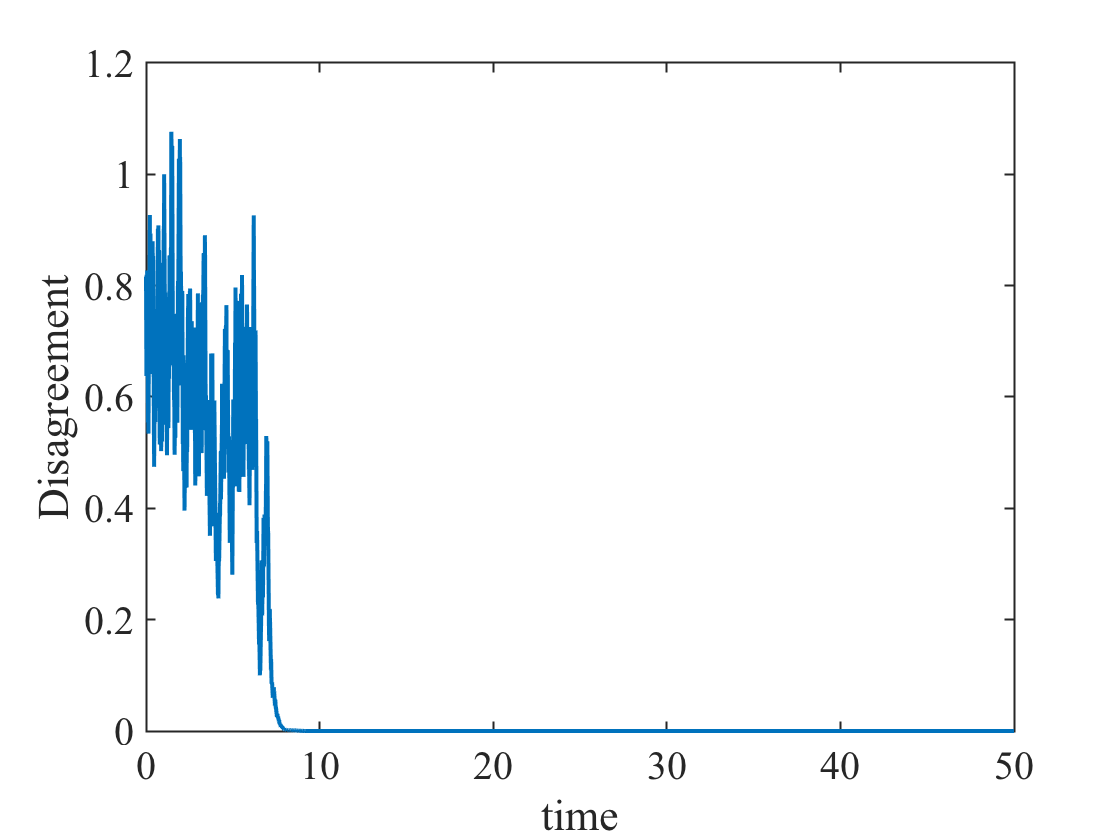}	
}
\caption{(a) {In the absence of a common noise, the oscillators do not synchronize. (b) Addition of common noise enables synchronization of oscillators. \label{fig:example1}}}
\end{figure}
\end{example}
}

\section{Phase reduction: A short review}\label{sec:background-phase-reduction}

\subsection{Phase reduction for deterministic oscillators} \label{subsec:phase-red}

In this section we first review the phase reduction technique for oscillators. Consider the autonomous system
\begin{equation}\label{eq:oscillator-model}
\dot{x} = F (x), \quad x \in \real^m, \quad m \ge 2,
\end{equation}
with an asymptotically stable hyperbolic limit cycle $x^\gamma(t)$ of period $T$  and frequency $\omega= \frac{2\pi}{T}$. 
The {\em phase} of an oscillator, denoted by $\theta(t)$, is the time that has elapsed as its state
moves around the limit cycle $x^\gamma(t)$, starting from an arbitrary reference point $\bar\theta$
on the cycle, called relative phase.  The phase, defined by 
$\theta(t) = \omega t +\bar\theta\quad (\mbox{mod}\; 2\pi)$, reduces the dynamics of  (\ref{eq:oscillator-model}) to the following scalar phase equation
\begin{equation*}\label{eq:phase}
\dot\theta(t) = \omega.
\end{equation*}
There is a one to one correspondence between phase $\theta$ and each point $x$ on the limit cycle $x^\gamma(t)$. This correspondence defines the following {\em phase map}~\cite{guckenheimer1975isochrons,schultheiss2011phase} on the basin of attraction {of} $x^\gamma(t)$:
\begin{equation*}\label{eq:phase_map}
\Phi(x(t)) := \theta(t) = \omega t+\bar \theta, \quad x\in x^\gamma,
\end{equation*}
with dynamics 
\begin{equation}\label{eq:phase_mapODE}
\dot\Phi(x(t)) =
\nabla\Phi(x)\cdot \dot{x} =
\nabla\Phi(x) \cdot  F(x) =  \omega, \quad x\in x^\gamma, 
\end{equation}
{where $\cdot$ denotes the inner product.}

We now {consider the effect of small perturbations to the dynamics of (\ref{eq:oscillator-model}), which no longer leave $x^\gamma(t)$ invariant.}
To this end, we first generalize the definition of the phase map to a neighborhood of $x^\gamma(t)$. Since $x^\gamma(t)$ is asymptotically stable, for any point $y$ in the basin of attraction of $x^\gamma(t)$, there exists an $x\in x^\gamma(t)$ such that as $t\to\infty$, 
$\|X(t,x) - X(t, y)\| \to 0$, where $X(\cdot, x)$ is the unique solution of (\ref{eq:oscillator-model}) with initial condition $x$, and $\|\cdot\|$ is an arbitrary norm in $\mathbb{R}^n$. The set of all such points $y$ is called the \emph{isochron} of $x$. 
For any $x\in x^\gamma$, all the points on the isochron of $x$ have the same phase as  $x$, i.e., $\Phi$ can be extended to the basin of attraction of $x^\gamma$ as follows:
\[
\Phi(y) :=\Phi(x) =  \theta, \quad \forall  y \in \mbox{isochron of $x$}.  
\]
Note that the isochron of $x$ is a level set of $\Phi(x)$. 

Now consider (\ref{eq:oscillator-model}) with a small perturbation $G$, which could describe the coupling to other oscillators. 
\begin{equation}\label{eq:oscillator_perturbed}
\dot{x} = F (x) +\epsilon G(x, \ldots), \quad x \in \real^m, \quad 0< \epsilon\ll 1. 
\end{equation}
Then, using \eqref{eq:phase_mapODE}, we have
\begin{equation*}\label{eq:Phase_Reduction}
\dot\Phi(x(t)) =
\nabla\Phi \cdot \dot{x} =
\nabla\Phi \cdot (F+\epsilon G) =
\omega + \epsilon\; \nabla\Phi\cdot G. 
\end{equation*}
Therefore, by definition of the phase map, the dynamics of (\ref{eq:oscillator_perturbed}) can be reduced to the following phase equation 
\begin{equation*}\label{}
\dot\theta=
\omega + \epsilon\;\nabla\Phi\cdot  G . 
\end{equation*}
The gradient of the phase map, $\nabla\Phi$, called the phase response curve (PRC) and captures changes in the phase per unit perturbation for small perturbations, plays an important role in reducing the system \eqref{eq:oscillator_perturbed}. 
In this work, we also use the Hessian of the phase map. To distinguish between the gradient and the Hessian, we refer to the gradient as the \textit{first-order PRC} and the Hessian as the \textit{second-order PRC}. 
In what follows, we review two essential methods to compute first-order PRCs. 

\subsection{Computation of  first-order phase response curves} \label{subsec:prc}

The  first-order PRC, denoted by $\bs Z(\theta)$, is defined by the gradient of the phase map $\nabla\Phi(x)$ at the point on the limit cycle associated with phase $\theta$. The first-order PRC can be computed using a \emph{direct method} in which a perturbation is introduced at each point of the limit cycle and the resulting change in phase is recorded,
\begin{equation*}\label{eq:PRC-direct}
\bs Z_i(\theta)= \frac{\partial  \Phi}{\partial x_i} ({x}) = \lim_{r \to 0} \frac{1}{r}\left(\Phi({x} + r \hat{ i}) -\Phi(x)\right), 
\quad  x \in x^\gamma,
\end{equation*}
where $\hat {i}$ is the $i$-th coordinate vector. 

Alternatively, the \emph{adjoint method} can be used that solves the following ODE in reverse time \cite{Erment-Terman10, Schwemmer2012}
\begin{equation}\label{eq:adjoint-method}
\frac{d}{dt}\nabla {\Phi} (x^\gamma(t))  =- D  F^\top (x^\gamma(t))\nabla {\Phi} (x^\gamma(t)),
\end{equation}
with constraint 
\begin{equation}\label{eq:adjoint-method-constraint}
\nabla {\Phi} (x^\gamma(0))\cdot F(x^\gamma(0)) = \omega, 
\end{equation}
where $D F^\top$ denotes the transpose of the Jacobian of $F$. 
Note that due to the negative sign in front of $-DF^\top$, the stability of the adjoint equation~\eqref{eq:adjoint-method} is the opposite of the stability of the limit cycle. Hence, the adjoint equation needs to be solved in reverse time. 
   
\section{Phase reduction for a single noisy oscillator}\label{sec:single-phase-reduction}

We now focus on the effect of noise in the dynamics of equation~\eqref{eq:oscillator-model} {and} its phase reduction. 
Consider~\eqref{eq:oscillator-model} with multiplicative 
white noise
\begin{equation}\label{eq:noisy-oscillator}
dx =  F(x) dt + \sigma K(x) dW(t),
\end{equation}
where $K(x) \in \real^{m\times m}$ is the
 {diffusion} matrix, $0<\sigma \ll 1$ is a constant determining the intensity of noise, and $d W(t)$ is a standard $m$-dimensional Weiner process increment. 
We could equivalently write the corresponding Langevin system 
\begin{equation*}\label{eq:oscillator_coupled_noisy_Langevin}
\dfrac{d{x}}{dt} = F (x) + \sigma K(x)   \xi(t),
\end{equation*}
where $\xi$ is $m-$dimensional white Gaussian noise. Therefore, we interpret  \eqref{eq:noisy-oscillator}  in It\^{o} sense. 

In the absence of  noise, $\sigma=0$, the  SDE \eqref{eq:noisy-oscillator} becomes an ODE $\dot{ x} =  F(x)$,  that we assume it admits  an asymptotically stable limit cycle $x^\gamma(t)$ with time period $T$ and frequency $\omega= \frac{2\pi}{T}$.  
We denote $x^\gamma(t)$ by $\gamma(\theta(t)) = \gamma(\omega t+\psi(t))$, where $\theta (t)$ and $\psi(t)$ are the asymptotic phase and relative phase, respectively. 
By a {\it noisy oscillator}, denoted by $x(t)$, we mean the solution of~\eqref{eq:noisy-oscillator} which can be approximated by $x^\gamma(t)$ for sufficiently small $\sigma$. 

Before we discuss the phase reduction of noisy oscillators, we first introduce the second-order PRC, denoted by $\bs H(\theta)$ and defined by the Hessian of the phase map $\nabla^2 \Phi (x)$ at the point on the limit cycle associated with phase $\theta$. In Section~\ref{sec:second-order-PRC}, we will discuss the second-order PRC in details. 

\begin{proposition}[\bit{Phase reduction of noisy oscillators}]\label{prop:phase:reduction:noisy:oscillator}
For the noisy oscillator~\eqref{eq:noisy-oscillator}, the dynamics of the phase in a neighborhood of the limit cycle is 
\begin{align}\label{eq:full-SDE}
 d \theta =    \Big( \omega + \frac{\sigma^2}{2} \mathrm{tr}\big[K(\gamma(\theta))^\top  \bs H(\theta) K(\gamma(\theta))\big] \Big) dt
 + \sigma \bs Z(\theta)^\top K(\gamma(\theta)) d  W(t).
\end{align}
\end{proposition}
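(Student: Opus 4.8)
The plan is to apply It\^o's formula to the phase map $\Phi$ evaluated along a solution of the noisy oscillator~\eqref{eq:noisy-oscillator}, just as the deterministic phase reduction in Section~\ref{subsec:phase-red} applies the chain rule to $\Phi(x(t))$. Writing $\theta = \Phi(x)$ and treating $x$ as the It\^o process $dx = F(x)\,dt + \sigma K(x)\,dW$, It\^o's formula for the scalar function $\Phi$ gives
\[
 d\Phi(x) = \nabla\Phi(x)^\top F(x)\,dt + \frac{\sigma^2}{2}\,\mathrm{tr}\!\big[K(x)^\top \nabla^2\Phi(x)\,K(x)\big]\,dt + \sigma\,\nabla\Phi(x)^\top K(x)\,dW(t).
\]
The first drift term is $\omega$ by~\eqref{eq:phase_mapODE} (the identity $\nabla\Phi\cdot F = \omega$ holds on the limit cycle, and we are working in a neighborhood where the phase map extends via isochrons), the second drift term is the It\^o correction coming from the second derivatives of $\Phi$ contracted against the diffusion, and the martingale term is the direct image of the noise under $\nabla\Phi$. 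It then remains only to evaluate $\nabla\Phi$ and $\nabla^2\Phi$ at the point $\gamma(\theta)$ on the limit cycle, which by definition are precisely the first-order PRC $\bs Z(\theta)$ and the second-order PRC $\bs H(\theta)$, and to substitute $x \approx \gamma(\theta)$ (valid for small $\sigma$, since the noisy oscillator stays in a neighborhood of $x^\gamma$). This yields~\eqref{eq:full-SDE}.

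The steps, in order: (1) recall that $\Phi$ is defined and smooth on the basin of attraction of $x^\gamma$, so $\nabla\Phi$ and $\nabla^2\Phi$ exist along the cycle; (2) apply the multidimensional It\^o formula to $\Phi(x(t))$ for the process~\eqref{eq:noisy-oscillator}, producing a drift consisting of the transport term $\nabla\Phi^\top F$ plus the trace correction $\tfrac{\sigma^2}{2}\mathrm{tr}[K^\top\nabla^2\Phi\,K]$, and a diffusion term $\sigma\nabla\Phi^\top K$; (3) invoke $\nabla\Phi(x^\gamma)\cdot F(x^\gamma) = \omega$ from~\eqref{eq:phase_mapODE}; (4) identify $\nabla\Phi(\gamma(\theta)) = \bs Z(\theta)$ and $\nabla^2\Phi(\gamma(\theta)) = \bs H(\theta)$ by the definitions of the first- and second-order PRCs; (5) restrict to a neighborhood of the limit cycle and replace the argument $x$ by $\gamma(\theta)$ to obtain the closed phase equation~\eqref{eq:full-SDE}.

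The main subtlety — and the point I would be most careful about — is that It\^o's formula is being applied to the \emph{nonlinear} coordinate change $\Phi$, so the $\tfrac{\sigma^2}{2}\mathrm{tr}[K^\top\nabla^2\Phi\,K]$ correction is genuinely an It\^o term and would be absent under a Stratonovich interpretation; this is exactly the discrepancy with the Stratonovich-based reductions of Ermentrout et al.\ and Teramae et al.\ noted in the introduction, so it is essential that~\eqref{eq:noisy-oscillator} is read in the It\^o sense (as stated via the Langevin form). The remaining loose end is the validity of evaluating $\nabla\Phi$ and $\nabla^2\Phi$ on the cycle rather than at the actual (slightly off-cycle) state: this is the standard $O(\sigma)$ approximation underlying all phase reduction, justified because the noisy oscillator remains within an $O(\sigma)$ tube around $x^\gamma$ on the relevant time scales, and it is consistent with keeping terms only up to order $\sigma^2$ in the drift and order $\sigma$ in the diffusion. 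No deeper obstacle arises; the result is essentially It\^o's formula plus the bookkeeping of identifying the geometric objects with the PRCs.
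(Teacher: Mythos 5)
Your proposal is correct and follows essentially the same route as the paper: apply the It\^{o} chain rule to $\Phi(x(t))$, use $\nabla\Phi\cdot F=\omega$, identify $\nabla\Phi$ and $\nabla^2\Phi$ on the cycle with $\bs Z$ and $\bs H$, and approximate $x$ by $x^\gamma$ for small $\sigma$. Your remarks on the It\^{o}-versus-Stratonovich distinction and on the validity of evaluating the PRCs on the cycle match the paper's own discussion.
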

\begin{proof}
since we interpret  \eqref{eq:noisy-oscillator}  in It\^{o} sense, 
 we apply the It\^{o} chain rule~\cite[Theorem 4.16]{bjork2009arbitrage} to the phase map $\Phi(x(t))$ to obtain
\begin{align*}
d \Phi (x)&= \nabla \Phi(x) \cdot F(x) dt  + \sigma \nabla \Phi(x) \cdot K(x) d W(t) \nonumber \\
& \quad  + \frac{1}{2} \big(F(x)dt + \sigma K(x) d W(t) \big)^\top \nabla^2 \Phi(x)  
  \big(F(x) dt + \sigma K(x) d  W(t)  \big) \nonumber\\
 &\approx \nabla \Phi (x^\gamma)\cdot  F(x^\gamma) dt  + \sigma \nabla \Phi (x^\gamma)\cdot K(x^\gamma) dW(t)\nonumber \\
& \quad  + \frac{1}{2} \big( F( x^\gamma)dt + \sigma K( x^\gamma) d  W(t) \big)^\top \nabla^2 \Phi( x^\gamma)  
  \big( F(x^\gamma) dt + \sigma K (x^\gamma) d W(t)  \big) \nonumber\\
& = \Big( \omega + \frac{\sigma^2}{2}\mathrm{tr}\big[K( x^\gamma)^\top  \nabla^2 \Phi( x^\gamma) K( x^\gamma)\big] \Big) dt + \sigma \nabla \Phi( x^\gamma) \cdot K( x^\gamma) d W(t),
\end{align*}
which yields the desired result similar to~\cite[Theorem 4.16]{bjork2009arbitrage}. 
\end{proof}

Note that the phase equation derived in Proposition \ref{prop:phase:reduction:noisy:oscillator} is different from the existing phase equations (using PRC approach), e.g., \cite{2011_Ermentrout_Beverlin_Troyer_Netoff} and \cite{teramae2004robustness, teramae2009stochastic}. Our approach to compute $d\theta$ is to interpret the SDEs in It\^{o} and employ It\^{o}  chain rule, while in the literature, the SDEs are interpreted in Stratonovich and after computing  $d\theta$ using ordinary chain rules, they are transferred to SDEs in the It\^{o} sense. In an Appendix we provide a few examples which explain the difference between the two approaches. 

\begin{remark}
 If $K(x)$ is the identity,~\eqref{eq:full-SDE} reduces to
\begin{equation}\label{eq:SDE-oscillator}
d \theta = \Big( \omega + \frac{\sigma^2}{2} \sum_{i=1}^n \bs H_{ii}(\theta) \Big) dt + \sigma \bs Z(\theta)^\top d  W(t),
\end{equation}       
where $\bs H_{ii}$s are the diagonal entries of $\bs H$. 
\end{remark}

\begin{example}(\bit{Phase reduction of noisy Hopf bifurcation normal form}.)
We consider the normal form of {a} supercritical Hopf bifurcation with additive noise:
\begin{align}\label{eq:hopf}
\begin{split}
d x_1 &= (\mu x_1 - \omega x_2 - (x_1^2 +x_2^2)x_1) dt + \sigma dW_1(t),\\
d x_2 &= (\omega x_1 + \mu x_2 - (x_1^2 +x_2^2)x_2) dt + \sigma dW_2(t).
\end{split}
\end{align}
Recall that, without noise, the dynamics of~\eqref{eq:hopf} yields a circular limit cycle  centered at the origin with radius $\sqrt{\mu}$ and frequency $\omega$. Furthermore, given the phase $\theta$ on the limit cycle, the associated point $(x_1^\gamma, x_2^\gamma) = (\sqrt{\mu} \cos(\theta), \sqrt{\mu} \sin(\theta))$. Equivalently, $\theta = \Phi(x_1^\gamma,x_2^\gamma)=\tan^{-1}(x_2^\gamma/x_1^\gamma)$. It follows immediately that   
\begin{align}\label{eq:Z-H-Hopf}
\begin{split}
\bs Z(\theta) =  \frac{1}{\sqrt \mu}\left(\begin{array}{cc} 
-\sin(\theta) \\ \cos(\theta) 
\end{array}\right), \quad
\bs H(\theta)= \frac{1}{\mu}\left(\begin{array}{cccc}
\sin(2\theta)& -\cos(2\theta)  \\ -\cos(2\theta)& -\sin(2\theta)
\end{array}\right). 
\end{split}
\end{align}

Therefore, using \eqref{eq:SDE-oscillator}, the phase reduction for the noisy Hopf bifurcation normal form~\eqref{eq:hopf} is 
\begin{equation*}\label{eq:phase-sde-hopf}
d \theta = \omega  dt + \frac{\sigma}{\sqrt{\mu}} dW(t). 
\end{equation*} 
\end{example}

\subsection{Computation of second-order phase response curves}\label{sec:second-order-PRC}

The Hessian of the phase map, denoted by  $\bs {H}(\theta(t)) = \nabla^2\Phi(x(t))$ and called the second-order PRC,  plays an important role in phase reduction of stochastic oscillators \cite{2018_Giacomin_Poquet_Shapira, 2018_Bressloff_MacLaurin, 2019_Aminzare_Holmes_Srivastava}.  Similar to the first-order PRC, the second-order PRC can be computed using a {\em direct method} as follows. 
 \begin{equation*}\label{eq:2nd-PRC-direct}
 \bs H_{ij}(\theta) = \frac{\partial^2 \Phi}{\partial x_i \partial x_j} ( x)=
 \lim_{r\to0} \frac{1}{r} \left(\dfrac{\partial \Phi}{\partial  x_j}( x + r \hat{ i}) -\dfrac{\partial \Phi}{\partial  x_j}( x )\right).
 \end{equation*}
Note that for an identity {diffusion}  matrix $K$, one only needs to compute the diagonal 
entries of $\bs H$, as follows (without computing the first-order PRC). 
  \begin{equation*}\label{eq:2nd-PRC-diagonal-direct}
\bs H_{ii}(\theta) 
 =\lim_{r\to0} \frac{1}{r}\left(\Phi( x + r \hat{ i}) - 2 \Phi ( x) +\Phi( x - r\hat{ i})\right).
 \end{equation*}
 
 Alternatively, the second-order PRC  solves the following ODE in reverse time
 \begin{equation}\label{eq:adjoint-method-2ndPRC}
 \dot{\bs H}(\theta) =-\nabla^2{F} (\bs Z(\theta)\otimes { I})- D  F^\top \bs H(\theta) - \bs H(\theta) D  F,
 \end{equation}
where all the  arguments  $( x^\gamma(t))$ from $\nabla^2 F$ and $D F$ are dropped;  
$\nabla^2 F = [\nabla^2 F_1 \cdots \nabla^2 F_n]$ is an $m\times m^2$ matrix and represents the Hessian matrix of the vector field $ F$, $\otimes$ is the Kronecker product and $ I$ is the $m\times m$ identity matrix. 
The  initial condition is determined by the following constraint 
\begin{equation}\label{eq:adjoint-method-constraint-2ndPRC}
 F^\top( x^\gamma(0))  \bs H(\theta)   F( x^\gamma(0)) = - \left( F^\top D  F^\top \right)( x^\gamma(0))\bs Z(\theta).  
 \end{equation}
 
\begin{proposition}[\bit{Computing the second-order PRC}]\label{prop:PRC}
Consider system \eqref{eq:oscillator-model} with an asymptotically stable limit cycle $ x^\gamma(t)$ and its corresponding phase map $\Phi$. Let $\nabla^2 \Phi$ be the Hessian matrix of the phase map $\Phi$.  Then $\nabla^2 \Phi$ solves \eqref{eq:adjoint-method-2ndPRC} with constraint \eqref{eq:adjoint-method-constraint-2ndPRC}. \oprocend
\end{proposition}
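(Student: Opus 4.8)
The plan is to differentiate the defining identity of the first-order PRC, namely the adjoint relation $\frac{d}{dt}\nabla\Phi(x^\gamma(t)) = -DF^\top(x^\gamma(t))\,\nabla\Phi(x^\gamma(t))$ together with $\nabla\Phi\cdot F=\omega$, with respect to the state, and then evaluate along the limit cycle. Concretely, I would start from the exact identity $\nabla\Phi(x)\cdot F(x)=\omega$ valid on the whole basin of attraction (equation~\eqref{eq:phase_mapODE} extended via isochrons), and take its gradient in $x$: this yields $\nabla^2\Phi(x)F(x) + DF^\top(x)\nabla\Phi(x)=0$ on the basin. That is already a functional equation relating $\bs H=\nabla^2\Phi$ and $\bs Z=\nabla\Phi$; the ODE~\eqref{eq:adjoint-method-2ndPRC} should drop out by differentiating this relation once more along the flow.

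The key steps, in order, are: (1) confirm that $\nabla\Phi\cdot F=\omega$ holds in a neighborhood of $x^\gamma$ (it does, since every isochron is mapped to a constant phase and the phase advances uniformly), so that we may legitimately differentiate in $x$ rather than only along trajectories; (2) take $\nabla_x$ of this identity to get the first-order constraint $\bs H(\theta) F + DF^\top \bs Z(\theta)=0$ along the cycle — evaluating at $\theta$ corresponding to $x^\gamma(0)$ and left-multiplying by $F^\top$ gives exactly the constraint~\eqref{eq:adjoint-method-constraint-2ndPRC}; (3) differentiate the adjoint ODE for $\bs Z$ with respect to $x$. Here $\frac{d}{dt}$ along the cycle equals $F\cdot\nabla_x$, so differentiating $\frac{d}{dt}\nabla\Phi = -DF^\top\nabla\Phi$ in $x$ produces a term $\frac{d}{dt}\nabla^2\Phi$ on the left (after commuting $\nabla_x$ with $\frac{d}{dt}$, which introduces exactly the extra $\bs H\,DF$ term via the product rule $\nabla_x(F\cdot\nabla_x\Phi)= DF^\top\nabla^2\Phi + (\text{flow-derivative of }\nabla^2\Phi)$), and on the right the product rule on $DF^\top\nabla\Phi$ gives $-D F^\top \bs H - \nabla^2 F\,(\bs Z\otimes I)$, the last term being the derivative of the Jacobian $DF^\top$ against $\nabla\Phi$ written in Kronecker form. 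Collecting terms yields~\eqref{eq:adjoint-method-2ndPRC}. (4) Argue uniqueness: as with the first-order PRC, the homogeneous linear ODE~\eqref{eq:adjoint-method-2ndPRC} has a one-parameter family of periodic solutions (or a unique one once the stability-reversal is accounted for and reverse-time integration is used), and the constraint~\eqref{eq:adjoint-method-constraint-2ndPRC} pins down the correct initial condition.

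The main obstacle I anticipate is bookkeeping in step (3): carefully justifying the interchange of $\nabla_x$ (state derivative) and $\frac{d}{dt}$ (derivative along $x^\gamma(t)$), and correctly expressing the second derivative of $F$ contracted against $\bs Z$ as the matrix $\nabla^2 F\,(\bs Z\otimes I)$ with the right index conventions for the $m\times m^2$ Hessian and the Kronecker product. A clean way to sidestep index chaos is to work componentwise — write the scalar identity $\sum_k \partial_k\Phi\, F_k=\omega$, apply $\partial_i\partial_j$, then recognize the resulting sums as the $(i,j)$ entries of the claimed matrix equation — and only at the end repackage into Kronecker notation. The remaining subtlety, shared with the first-order theory, is the reverse-time / stability caveat already flagged after~\eqref{eq:adjoint-method}: since the Hessian equation is again driven by $-DF^\top$ (plus lower-order coupling to $\bs Z$), it must be integrated backward in time, and one should note that the inhomogeneous-looking term $\nabla^2 F(\bs Z\otimes I)$ does not affect this reasoning because $\bs Z$ is already known from the first-order solve.
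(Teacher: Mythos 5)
Your argument is correct, and it takes a genuinely different route from the paper's. The paper defers the proof to Wilson and Ermentrout, and the authors' own derivation (present in the source) perturbs a trajectory, $x^\gamma_p(t)=x^\gamma(t)+r\,p(t)$, Taylor-expands both the vector field and the phase map to order $r^2$, uses the fact that the asymptotic phase difference $\Phi(x^\gamma_p(t))-\Phi(x^\gamma(t))$ is constant in time, and matches the coefficients of $r$ and $r^2$ for arbitrary perturbation directions $p$. You instead differentiate the exact identity $\nabla\Phi(x)\cdot F(x)\equiv\omega$ twice in the state variable. Both are valid; yours is shorter and yields the stronger pointwise vector identity $\bs H F=-DF^\top\bs Z$ on the whole basin (the paper's constraint \eqref{eq:adjoint-method-constraint-2ndPRC} is just its contraction with $F^\top$, and the paper obtains the same scalar relation from $\tfrac{d^2}{dt^2}\Phi(x^\gamma(t))=0$), whereas the perturbation route makes the interpretation of $\bs H$ as a second-order response to impulsive perturbations explicit. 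The componentwise computation you propose does close exactly as you hope: applying $\partial_i\partial_j$ to $\sum_k\partial_k\Phi\,F_k=\omega$ produces precisely the four terms $\bigl(\tfrac{d}{dt}\bs H\bigr)_{ij}$, $(\bs H\,DF)_{ij}$, $(DF^\top\bs H)_{ij}$, and $\sum_k\bs Z_k(\nabla^2F_k)_{ij}=\bigl(\nabla^2F(\bs Z\otimes I)\bigr)_{ij}$, which is \eqref{eq:adjoint-method-2ndPRC}; note this tacitly requires $\Phi\in C^3$ on the basin, which holds for a hyperbolic limit cycle of a smooth field and is assumed implicitly throughout the paper. Your step (4) on uniqueness is not needed, since the proposition only asserts that $\nabla^2\Phi$ is a solution satisfying the constraint.
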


A proof of Proposition~\ref{prop:PRC} can be found in~\cite[Section 2]{wilson2018greater}. 

\begin{remark}\label{initial-conditions-PRC-2ndPRC}
	{The following choices of initial conditions for 
	$\nabla\Phi$ and $\nabla^2\Phi$ guarantee the constraints given in \eqref{eq:adjoint-method-constraint} and \eqref{eq:adjoint-method-constraint-2ndPRC}, respectively:
		\begin{align*}
		\nabla\Phi( x^\gamma(0)) &= \dfrac{\omega}{( F^\top F)( x^\gamma(0))} F( x^\gamma(0)),\\
		\nabla^2\Phi ( x^\gamma(0)) 
		&= - \dfrac{\omega}{2( F^\top F)( x^\gamma(0))}(D F+D F^\top)( x^\gamma(0)).
		\end{align*}
	}
\end{remark} 

\begin{remark} 
	In what follows, we vectorize~\cite{macedo2013typing} equation~\eqref{eq:adjoint-method-2ndPRC} and combine the corresponding equations of the first- and second-order PRC. 
	Let $\bs H_v= \text{vec}(\bs H)$ be the vectorization of $\bs H$, i.e., the vector of the columns of $\bs H$. Then,  
\begin{align}\label{eq:vector-PRC-2ndPRC}
\begin{split}
\dot{\bs Z}&= - D F^\top \bs Z,\\
\dot{\bs{H}}_v&= - ( I \otimes D F^\top+D F^\top\otimes I ) \bs{H}_v- ( I \otimes \nabla^2 F)\text{vec}({\bs Z\otimes I}),\\
&= - ( I \otimes D F^\top+D F^\top\otimes I ) \bs{H}_v - ({\bs Z^\top\otimes I\otimes  I})\text{vec}(\nabla^2 F), 
\end{split}
\end{align}
with constraints
	\begin{align*}
      F^\top ( x^\gamma(0))\bs Z(0) &= \omega,\\
     ( F^\top\otimes   F^\top) ( x^\gamma(0)) \bs{H}_v (0)&=\left( F^\top D  F^\top \bs Z \right)( x^\gamma(0)), 
	\end{align*}
	where  the following vectorization  equalities are used for arbitrary matrices $A$, $B$, and $C$:
	\begin{align*}
     	\text{vec}(AB)&=( I \otimes A)\text{vec}(B) =(B^\top\otimes  I )\text{vec}(A),\\
		\text{vec}(ABC)&=(C^\top \otimes A)\text{vec}(B). 
	\end{align*}
	Here $ I $ is an identity matrix of the appropriate size. 
\end{remark}

\begin{remark} 
		Due to the negative sign in the right hand side of~\eqref{eq:adjoint-method-2ndPRC}, or equivalently \eqref{eq:vector-PRC-2ndPRC},  its stability is the opposite of the stability of the limit cycle. Hence, the  equation needs to be solved in reverse time. 
\end{remark}

\begin{example}(\bit{Hopf bifurcation normal form.})
For the Hopf bifurcation dynamics~\eqref{eq:hopf} with $\sigma=0$, 
it can be verified that the matrix $\bs H$ derived in \eqref{eq:Z-H-Hopf}
satisfies \eqref{eq:adjoint-method-2ndPRC}. 
\end{example}

 \begin{example}(\bit{Van der Pol oscillator.})\label{example:vanderpol1}
We now consider the van der Pol oscillator with additive white noise
    \begin{subequations}\label{eq:vanderpol}
    \begin{align}
            d x_1 & = \left(x_1 - \frac{1}{3} x_1^3 -x_2\right) dt + \sigma dW_1(t) \\
            d x_2 &=x_1 dt + \sigma dW_2(t) .
    \end{align}
    \end{subequations}
   Figure~\ref{fig:vanderpol_PRC} shows the first-order PRC and the second-order PRC for dynamics~\eqref{eq:vanderpol} with $\sigma=0$. These 2-component PRCs are computed by numerically solving~\eqref{eq:vector-PRC-2ndPRC} with initial conditions discussed in Remark~\ref{initial-conditions-PRC-2ndPRC}.
   
    \begin{figure}[ht!]
        \centering
        \includegraphics[width=0.3\linewidth]{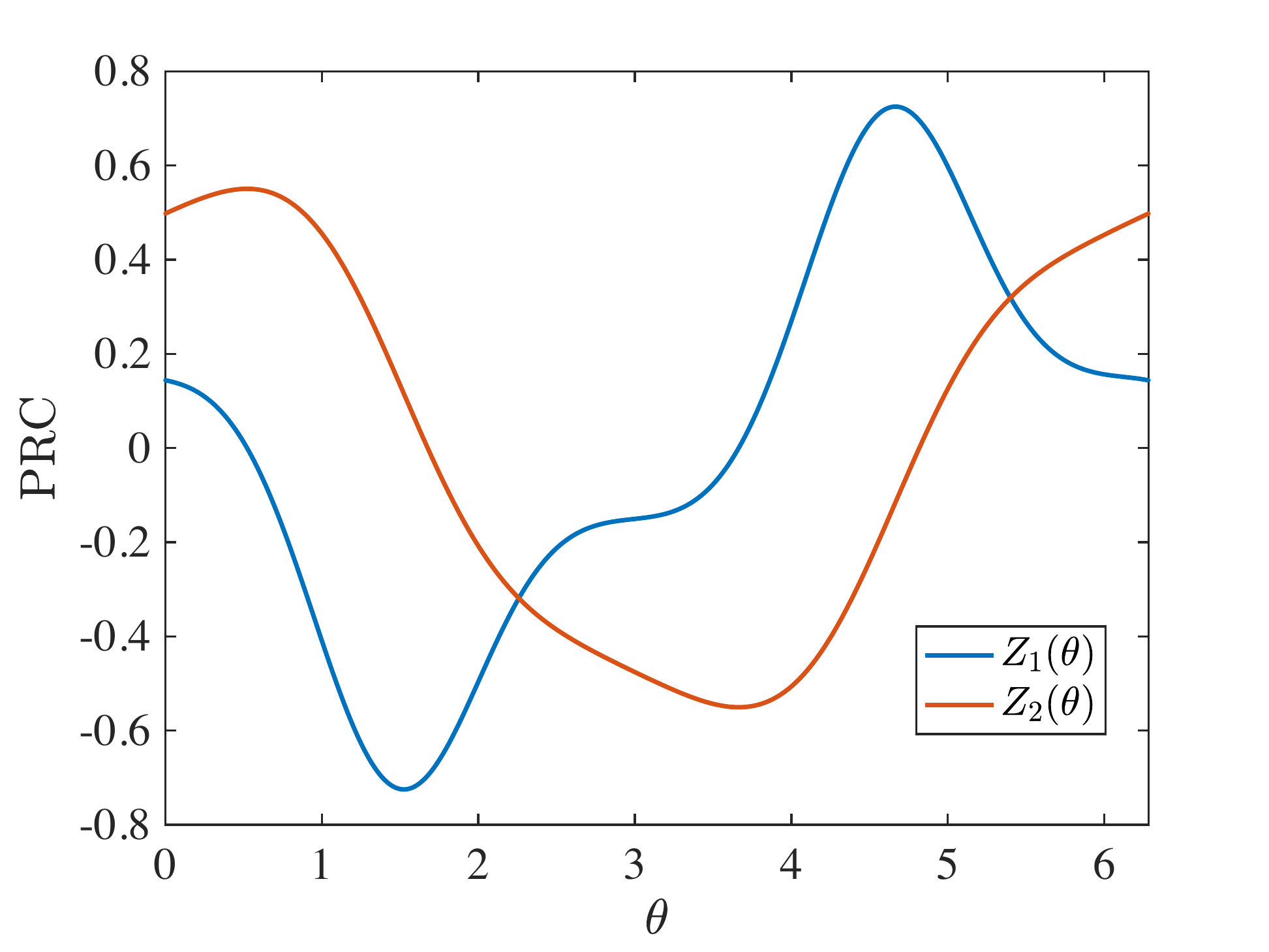}
        \includegraphics[width=0.3\linewidth]{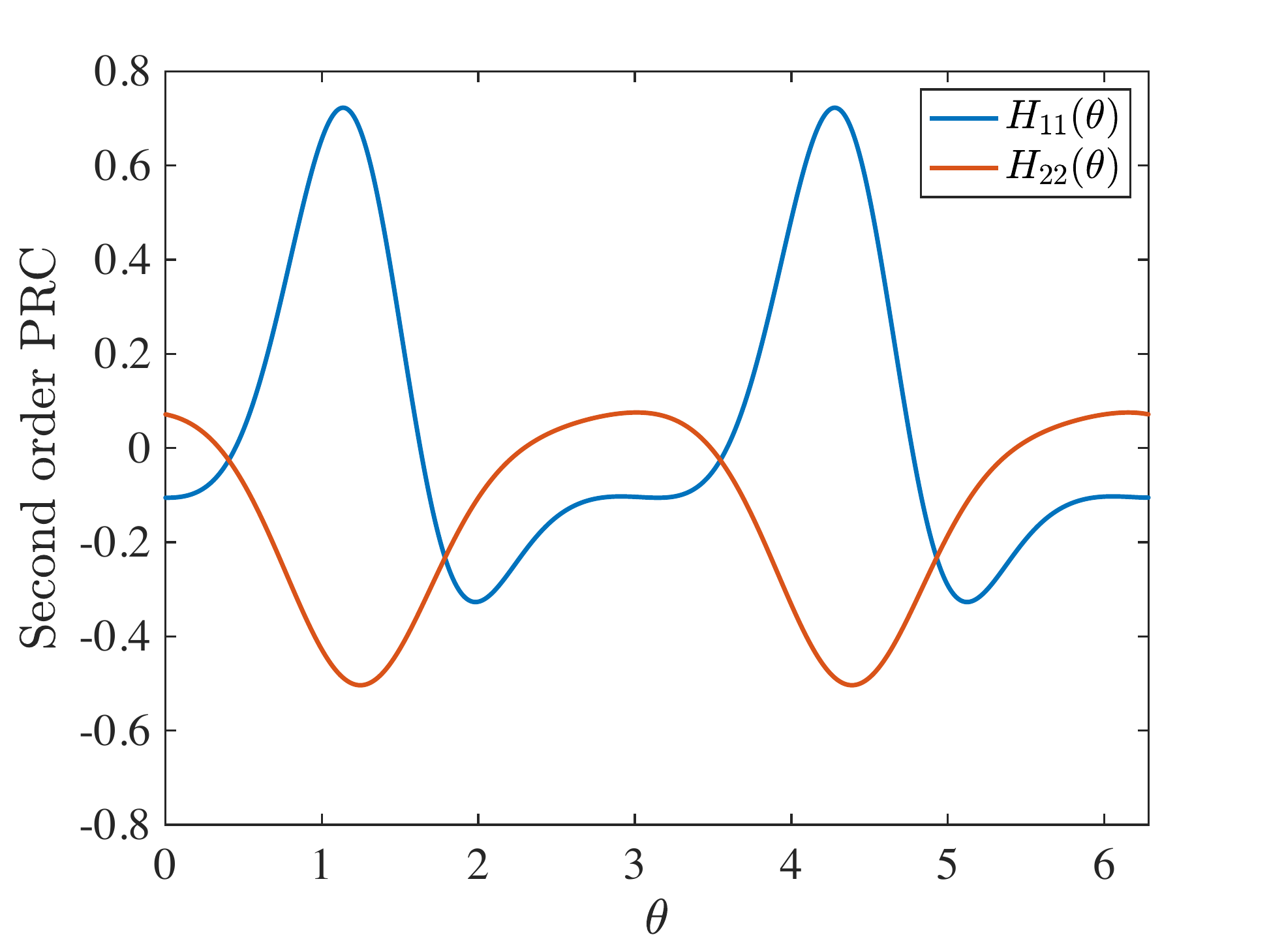}\\
        \caption{{The components of the first-order PRC (left) and the second-order PRC (right) of the} Van der Pol oscillator.}
        \label{fig:vanderpol_PRC}
    \end{figure}
    
\end{example}

\section{Phase reduction of weakly coupled noisy oscillators}\label{sec:coupled-phase-reduction}

In this section, we derive coupled phase equations of a network of noisy oscillators which are weakly connected through noisy interactions. 
For $i=1,\ldots, N$, let
\begin{equation}\label{eq:oscillator_coupled_noisy}
d{ x_i} =  \underbrace{ F ( x_i) dt+ \sigma \B( x_i) d  W_i(t)}_{\text{intrinsic dynamics}} \;+\; \underbrace{\sum_{j=1}^N c_{ij}\big(\epsilon \H( x_j,  x_i) dt +
\delta \C( x_j,  x_i) d  W_{ij}(t)\big)}_{\text{coupling dynamics}}
\end{equation}
describe the dynamics of each oscillator $i$ and its interaction with its adjacent oscillators, the set of oscillators which are connected to $i$. 
The set up of \eqref{eq:oscillator_coupled_noisy} is very similar to \eqref{eq:general-form2}:
There is an underlying weighted graph with weight $c_{ij}$ which does \textit{not} need to be undirected, i.e., $c_{ij} \neq c_{ji}$.   

 The first two terms describe the dynamics of each isolated oscillator as discussed in \eqref{eq:noisy-oscillator}. We assume that the oscillators have identical dynamics and allow small common noise on their dynamics. The noise can arise from a noisy environment, which affects the dynamics of each individual, or the intrinsic property of the agents. In terms of modeling, one may assume that the model parameters are buried in noise. 
The last two terms of \eqref{eq:oscillator_coupled_noisy} describe the noisy interactions between oscillator $i$ and its adjacent oscillators. This noise arises from the edges of the graph. 

In \eqref{eq:oscillator_coupled_noisy}, the state variable $ x_i$, the internal dynamics $ F ( x_i)$, and the deterministic interaction function  $ \H( x_j, x_i)$ are $m-$dimensional vectors. 
The $m\times m$ diffusion matrices $\sigma \B$ and $\delta \C$ 
 describe the correlation of {\it common} noise and  {\it interaction} noise, respectively.  The vectors 
  $d W_i$ and   $d W_{ij}$ are $m-$dimensional standard Wiener process increments, 
   i.e., $\langle d W_i^k(t), d W_i^l(s)\rangle = \delta_{kl}(t-s)$, where $d W_i^k$ is the $k-$th argument of $d W_{i}$. Similarly,  $\langle d W_{ij}^k(t), d W_{ij}^l(s)\rangle = \delta_{kl}(t-s)$, where $d W_{ij}^k$ is the $k-$th argument of $d W_{ij}$.

The constant parameters $\epsilon$, $\sigma$, and $\delta$, which  respectively represent the deterministic coupling strength, the  common and interaction noise intensities, are assumed to be sufficiently small and satisfy 
\begin{equation}\label{order-coupling-noise}
 \mathcal{O}(\sigma) = \mathcal{O}(\delta)= \mathcal{O}(\sqrt\epsilon), \qquad 0<\epsilon\ll1. 
 \end{equation}

For $\epsilon = \sigma = \delta =0$, \eqref{eq:oscillator_coupled_noisy} reduces to $\dot{ x}_i=  F( x_i)$, that we assume admits an asymptotically stable limit cycle $ x_i^\gamma$ with frequency $\omega$. 
 We denote $ x_i^\gamma$ by $\gamma(\theta_i(t))  = \gamma(\omega t+\psi_i(t))$, where $\theta_i$ and $\psi_i$ are respectively the asymptotic phase and relative phase of oscillator $i$. 

A solution of~\eqref{eq:oscillator_coupled_noisy}, denoted by 
$ x_i(t)$, is called a {\it noisy oscillator}.   
Note that we assume that  $\epsilon$,  $\sigma$, and $\delta$ are  small enough so that the trajectories stay within the basin of attraction of the limit cycles $ x_i^\gamma$ after receiving the deterministic and stochastic perturbations.
We  also assume that $ x_i$ can be approximated by $ x_i^\gamma$.  

\begin{proposition}[\bit{Phase reduction of weakly coupled noisy oscillators}]
For the  noisy coupled  oscillators~\eqref{eq:oscillator_coupled_noisy}, the dynamics of the coupled phases $\theta_i$ in  neighborhoods of the limit cycles are
\begin{align}\label{eq:full-couplde-SDE}
 d \theta_i & =  \left(\omega+\epsilon\sum_j c_{ij} \bs Z^\top \H +
  \frac{\sigma^2 }{2}  \mathrm{tr}[\B^\top \bs H \B]+  \frac{\delta^2 }{2} \mathrm{tr}\Big[\sum_j c_{ij}^2 \C^\top \bs H \C\Big] \right)\;dt \nonumber
\\
 &\quad+  \sigma \bs Z^\top \B \;d  W_i(t) + \delta\sum_j c_{ij} \bs Z^\top\C \;d  W_{ij}(t), 
\end{align}
where $\bs Z=\bs Z(\theta_i)$,  $\bs H= \bs H(\theta_i)$, $\H= \H(\gamma(\theta_j), \gamma(\theta_i))$, 
$\B=\B(\gamma(\theta_i))$,  $\C=\C(\gamma(\theta_j), \gamma(\theta_i))$, and $\sum_j=\sum_{j=1}^N$. 
\end{proposition}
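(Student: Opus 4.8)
The plan is to apply the It\^{o} chain rule to the phase map $\Phi$ along each trajectory $x_i(t)$ of~\eqref{eq:oscillator_coupled_noisy}, exactly as in the proof of Proposition~\ref{prop:phase:reduction:noisy:oscillator}, but now keeping track of the extra coupling drift and the extra coupling diffusion terms. First I would write the SDE for $x_i$ in the standard form $dx_i = f_i\,dt + \sum_k g_i^{(k)}\,dW^{(k)}$, identifying the drift $f_i = F(x_i) + \epsilon\sum_j c_{ij}\H(x_j,x_i)$ and the independent noise channels: one channel with coefficient $\sigma\B(x_i)$ driven by $dW_i$, and, for each neighbor $j$, a channel with coefficient $\delta c_{ij}\C(x_j,x_i)$ driven by $dW_{ij}$. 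The key point is that $W_i$ and all the $W_{ij}$ are mutually independent, so the It\^{o} correction (the second-order term) splits into a sum of traces over the separate channels with no cross terms.

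Next I would compute $d\Phi(x_i)$ by It\^{o}'s formula: the first-order part is $\nabla\Phi(x_i)\cdot f_i\,dt + \nabla\Phi(x_i)\cdot\big(\sigma\B\,dW_i + \delta\sum_j c_{ij}\C\,dW_{ij}\big)$, and the second-order part is $\tfrac12\,\mathrm{tr}$ of $\nabla^2\Phi(x_i)$ contracted against the total diffusion covariance, which by independence equals $\tfrac12\sigma^2\,\mathrm{tr}[\B^\top\nabla^2\Phi\,\B] + \tfrac12\delta^2\sum_j c_{ij}^2\,\mathrm{tr}[\C^\top\nabla^2\Phi\,\C]$. The pure-drift contribution to the It\^{o} correction, of order $\epsilon^2$ and $\epsilon\sigma$ etc., is negligible under the scaling assumption~\eqref{order-coupling-noise}, since $\mathcal O(\epsilon) = \mathcal O(\sigma^2) = \mathcal O(\delta^2)$ and we retain only first-order terms in the small parameters; this is the same level of approximation already used in Proposition~\ref{prop:phase:reduction:noisy:oscillator}. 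Then I would invoke $\nabla\Phi(x^\gamma)\cdot F(x^\gamma) = \omega$ from~\eqref{eq:phase_mapODE}, together with the definitions $\bs Z(\theta_i) = \nabla\Phi(\gamma(\theta_i))$ and $\bs H(\theta_i) = \nabla^2\Phi(\gamma(\theta_i))$, and finally replace every occurrence of $x_i$ by its limit-cycle approximation $\gamma(\theta_i)$ and $x_j$ by $\gamma(\theta_j)$ (valid because trajectories stay near the limit cycles). Collecting terms yields precisely~\eqref{eq:full-couplde-SDE}.

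The main obstacle, and the only genuinely non-routine point, is justifying that the various product and cross terms generated by It\^{o}'s formula and by the linearization $x_i \approx \gamma(\theta_i)$ are of higher order and may be dropped consistently — i.e., that the truncation is uniform over the relevant time scale. This is handled by the scaling~\eqref{order-coupling-noise}: terms involving $\epsilon\H\cdot\nabla^2\Phi\cdot\B$, $\epsilon^2\H\cdot\nabla^2\Phi\cdot\H$, and the drift$\times$drift It\^{o} piece are all $\mathcal O(\epsilon^{3/2})$ or smaller, hence subleading relative to the retained $\mathcal O(\epsilon)=\mathcal O(\sigma^2)=\mathcal O(\delta^2)$ contributions. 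Everything else is a direct transcription of the single-oscillator argument in Proposition~\ref{prop:phase:reduction:noisy:oscillator}, now summed over the independent noise sources indexed by the edges and the node.
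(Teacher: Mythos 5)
Your proposal is correct and follows essentially the same route as the paper's proof: apply the It\^{o} chain rule to $\Phi(x_i)$, use the mutual independence of $W_i$ and the $W_{ij}$ to split the second-order correction into separate traces with no cross terms, approximate $x_i$ by $\gamma(\theta_i)$ (and $x_j$ by $\gamma(\theta_j)$), and invoke $\nabla\Phi(x_i^\gamma)\cdot F(x_i^\gamma)=\omega$. The only cosmetic difference is that the paper eliminates the drift--drift and drift--noise cross terms exactly via the It\^{o} multiplication rules $dt\cdot dt=dt\cdot dW_i=dt\cdot dW_{ij}=0$, whereas you appeal to the scaling~\eqref{order-coupling-noise} to argue they are subleading; both lead to the same result.
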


Note that we use $\H$ for the coupling function and $\bs H$ for the second-order PRC. 
\begin{proof}
We apply the It\^{o} formula~\cite[Theorem 4.16]{bjork2009arbitrage} to the phase map $\Phi( x_i(t))$. Then
\begin{align*}
d \Phi ( x_i)&= \nabla \Phi  ( x_i)\cdot  \Big[ F( x_i)+\epsilon \sum_j c_{ij}\H( x_j,  x_i)\Big] dt
\\
 &\quad+ \nabla \Phi( x_i) \cdot \Big(\sigma\B( x_i) d  W_i(t) + \delta\sum_j c_{ij}\C( x_j,  x_i) d  W_{ij}(t)\Big)  
 \\
& \quad + \frac{1}{2} \Big[  F( x_i) dt+\epsilon \sum_j c_{ij}\H( x_j,  x_i)dt + \sigma\B( x_i) d  W_i(t) + \delta\sum_j c_{ij} \C( x_j,  x_i) d  W_{ij}(t) \Big]^\top\\
 &\qquad \nabla^2 \Phi( x_i) \Big[  F( x_i) dt+\epsilon \sum_j c_{ij}\H( x_j,  x_i)dt + \sigma\B( x_i) d  W_i(t) + \delta\sum_j c_{ij} \C( x_j,  x_i) d  W_{ij}(t) \Big]
 \\
 &= \nabla \Phi ( x_i)\cdot  \Big[ F( x_i)+\epsilon\sum_j c_{ij}\H( x_j,  x_i)\Big] dt 
 \\
 &\quad+ \nabla \Phi( x_i) \cdot \Big(\sigma\B( x_i) d  W_i(t) + \delta\sum_j c_{ij}\C( x_j,  x_i) d  W_{ij}(t)\Big)  
 \\
& \quad + \frac{1 }{2}  \mathrm{tr}\Big[\sigma^2 \B( x_i)^\top\nabla^2 \Phi( x_i)\B( x_i)+ \delta^2 \sum_j c_{ij}^2\C( x_j,  x_i)^\top 
\nabla^2 \Phi(\bs x_i) \C( x_j,  x_i)\Big]dt 
\\
&\approx \nabla \Phi ( x_i^\gamma)\cdot  \Big[ F( x_i^\gamma)+\epsilon\sum_j c_{ij}  \H( x_j^\gamma,  x_i^\gamma)\Big] dt 
 \\
 &\quad+ \nabla \Phi( x_i^\gamma) \cdot \Big(\sigma\B( x_i^\gamma) d  W_i(t) + \delta\sum_j c_{ij} \C( x_j^\gamma,  x_i^\gamma) d  W_{ij}(t)\Big)  
 \\
& \quad + \frac{1 }{2}  \mathrm{tr}\Big[\sigma^2 \B( x_i^\gamma)^\top\nabla^2 \Phi( x_i^\gamma)\B( x_i^\gamma)+ \delta^2 \sum_j c_{ij}^2\C( x_j^\gamma,  x_i^\gamma)^\top 
\nabla^2 \Phi( x_i^\gamma) \C( x_j^\gamma,  x_i^\gamma)\Big]dt 
\\
&=  \Big[\omega+\epsilon\sum_j c_{ij}\nabla\Phi( x_i^\gamma)\cdot \H( x_j^\gamma,  x_i^\gamma)\Big] dt 
 \\
 & \quad + \frac{1 }{2}  \mathrm{tr}\Big[\sigma^2 \B( x_i^\gamma)^\top\nabla^2 \Phi( x_i^\gamma)\B( x_i^\gamma)+ \delta^2 \sum_j c_{ij}^2\C( x_j^\gamma,  x_i^\gamma)^\top 
\nabla^2 \Phi( x_i^\gamma)  \C( x_j^\gamma,  x_i^\gamma)\Big]dt 
\\
 &\quad+ \nabla \Phi( x_i^\gamma) \cdot \Big(\sigma\B( x_i^\gamma) d  W_i(t) + \delta\sum_j c_{ij} \C( x_j^\gamma,  x_i^\gamma) d  W_{ij}(t)\Big),  
 \end{align*}
which yields the desired result. The first equality is obtained by the It\^{o} formula, the second equality obtained from the equalities 
$dt\cdot dt = dt \cdot d  W_i =dt \cdot d  W_{ij} =d W_{ij} \cdot d  W_i =0$ and 
$d W_i \cdot d W_i=d W_{ij}\cdot d W_{ik}= \delta_{jk}dt$. The approximation is obtained by the assumption that each noisy oscillator $ x_i$ can be  approximated by $ x_i^\gamma$, and the last equality obtained from $\nabla \Phi( x_i^\gamma)\cdot  F( x_i^\gamma) =\omega$.
\end{proof}

\section{Stochastic synchronization of coupled phase reduced equations}\label{sec:synchronization-coupled-phase}

In this section, we apply Theorems  \ref{thm:cond:synch2} and  \ref{thm:cond:synch3} (and similarly  Theorem \ref{thm:cond:synch4}) to the coupled phase equations \eqref{eq:full-couplde-SDE} and, based on the given PRCs, $\bs Z$ and $\bs H$, we will design appropriate $\B, H,$ and $\C$  that guarantee the oscillators'  synchronization. 

In what follows, we consider three separate cases: (1) no edge coupling, (2) deterministic edge coupling, and (3) stochastic edge coupling. 

\textbf{Case 1. No edge coupling.} First, we consider $N$ oscillators which are connected only through a common noise, i.e., consider \eqref{eq:full-couplde-SDE} with no  coupling ($c_{ij} = c_{ji}=0$) and  $dW_i=d\hat W$, where $d\hat W$ is an $m-$dimensional Wiener increment. 
\begin{align}\label{eq:full-couplde-SDE-m-dim}
 d \theta_i & =  
 \left(\omega +
  \frac{\sigma^2 }{2}  \mathrm{tr}[\B^\top \bs H \B]\right)\;dt +\sigma \bs Z^\top \B \;d  \hat W(t),
\end{align}
and check  conditions (i) and (iv) of Theorem \ref{thm:cond:synch2} and condition (ii) of Theorem \ref{thm:cond:synch3}. To apply these theorem,  \eqref{eq:full-couplde-SDE-m-dim} must be in the format of \eqref{eq:general-form2}, with a $1-$dimensional Wiener increment. To this end, we write $\sigma \bs Z^\top \B \;d  \hat W(t)$ as $\sigma \ino \;d W(t)$ where 
$\ino = \sqrt{\bs Z^\top \B\B^\top\bs Z}$ is a scalar and $dW$ is
 a $1-$dimensional Wiener increment. 
\begin{align}\label{eq:full-couplde-SDE-2}
 d \theta_i & =  
 \left(\omega +
  \frac{\sigma^2 }{2}  \mathrm{tr}[\B^\top \bs H \B]\right)\;dt +\sigma \ino \;d W(t). 
\end{align}

\begin{description}[leftmargin=*]
\item[Condition i of Theorem \ref{thm:cond:synch2}.] 
Assume that $K$ is differentiable and let 
\begin{equation}\label{barcF-no-coupling}
\bar c_{\mathcal{F}}  := \dfrac{\sigma^2}{2} {\sup_{(\phi,t) \in (-\frac{\pi}{2}, \frac{\pi}{2})\times[0,T)}} \lambda_{\max} \left[\frac{\partial }{\partial \phi}  \Big(\B(\gamma(\omega t+\phi))^\top  \bs H(\omega t+\phi) \B(\gamma(\omega t+\phi))\Big)\right], 
\end{equation}
where $\lambda_{\max}[A]$ denotes the maximum eigenvalue of $A$.  Then, for $\phi, \eta\in {(-\frac{\pi}{2}, \frac{\pi}{2})}$ and $t\in[0,T)$:
\begin{equation*}
(\phi-\eta) (\mathcal{F}(\phi,t)-  \mathcal{F}(\eta,t))
\leq  \bar c_{\mathcal{F}}(\phi-\eta)^2, 
\end{equation*}
where $\mathcal{F}(\phi,t)= \omega +
  \frac{\sigma^2 }{2}  \mathrm{tr}[\B(\gamma(\omega t+\phi))^\top \bs H(\omega t+\phi) \B(\gamma(\omega t+\phi))].$ 
  We used $\mathrm{tr}[A] = \sum_{i=1}^N e_i^\top A e_i$,  where $e_i$s are the standard basis of $\mathbb R^N$, and 
$e_i^\top A e_i \leq \lambda_{\max}[A] e_i^\top e_i.$

\item[Condition iv of Theorem \ref{thm:cond:synch2} and condition ii of Theorem \ref{thm:cond:synch3}.] 
Assume that $\B$ is differentiable. Then 
$\ino(\phi,t) = \sqrt{\bs Z(\omega t+\phi)^\top (\B\B^\top)(\gamma(\omega t+\phi))\bs Z(\omega t+\phi)}$
becomes differentiable and  for any $\phi,\eta \in (-\frac{\pi}{2}, \frac{\pi}{2})$ and $t\in[0,T)$, 
\begin{equation}\label{barcK-no-coupling}
|\ino(\phi,t)-\ino(\eta,t)| \leq   \bar c_{\ino}  |\phi-\eta|,
 \quad \text{where}\quad
\bar c_{\ino} :=  
{\sup_{(\phi,t) \in (-\frac{\pi}{2}, \frac{\pi}{2})\times[0,T)}}
\frac{\partial }{\partial \phi} \ino(\phi,t),
\end{equation}
and 
\begin{equation}\label{underbarcK-no-coupling}
 \underbar c_{\ino}  (\phi-\eta)^2 \leq (\phi-\eta) (\ino(\phi,t)-\ino(\eta,t)),
 \quad \text{where}\quad
 \underbar c_{\ino} :=  {\inf_{(\phi,t) \in (-\frac{\pi}{2}, \frac{\pi}{2})\times[0,T)}}
\frac{\partial }{\partial \phi}  \ino(\phi,t). 
\end{equation}

\end{description}

In summary, we proved the following proposition. 

\begin{proposition}\label{prop:noise-induced-sync}
Consider  \eqref{eq:full-couplde-SDE-2} and assume $\B$ is differentiable. Then \eqref{eq:full-couplde-SDE-2} stochastically synchronizes if 
\[-2\bar c_{\mathcal{F}} + \sigma^2 (2  \underbar c_{\ino}^2 - \bar c_{\ino}^2) 
= \sigma^2(2  \underbar c_{\ino}^2 - \bar c_{\ino}^2-\bar\lambda_{\mathcal F})>0,\]
where $\bar\lambda_{\mathcal F} = \frac{2}{\sigma^2} \bar c_{\mathcal{F}},$ and the constant bounds are defined in \eqref{barcF-no-coupling}-\eqref{underbarcK-no-coupling}.

\end{proposition}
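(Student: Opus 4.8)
The plan is to recognize \eqref{eq:full-couplde-SDE-2} as a degenerate special case of the general network \eqref{eq:general-form2} --- one with a single common noise and \emph{no} coupling --- and then invoke Theorems~\ref{thm:cond:synch2} and~\ref{thm:cond:synch3}. As already observed in the derivation preceding the statement, \eqref{eq:full-couplde-SDE-2} has the form \eqref{eq:general-form2} with $n=1$, $\Omega_1=(-\tfrac{\pi}{2},\tfrac{\pi}{2})$, intrinsic drift $\mathcal F(\phi,t)=\omega+\tfrac{\sigma^2}{2}\mathrm{tr}[\B(\gamma(\omega t+\phi))^\top\bs H(\omega t+\phi)\B(\gamma(\omega t+\phi))]$, common--noise coefficient $\ino(\phi,t)=\sqrt{\bs Z(\omega t+\phi)^\top(\B\B^\top)(\gamma(\omega t+\phi))\bs Z(\omega t+\phi)}$ (a scalar), and all coupling switched off: $c_{ij}=0$, i.e.\ $\epsilon=\delta=0$, $\mathcal H\equiv0$, $\nc\equiv0$. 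The standing assumption ``$\epsilon=0\Rightarrow\delta=0$'' is then satisfied, and since $\B$ is assumed differentiable and $\bs Z,\bs H$ are smooth periodic functions of the phase (solutions of the adjoint system \eqref{eq:vector-PRC-2ndPRC} along the compact limit cycle), $\mathcal F$ and $\ino$ are $C^1$ and all the suprema and infima in \eqref{barcF-no-coupling}--\eqref{underbarcK-no-coupling} are finite, being extrema of continuous functions on the compact set $\overline{\Omega_1}\times[0,T]$.

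Next I would assemble the hypotheses of the two theorems. The computations carried out just above the proposition already establish condition~(i) of Theorem~\ref{thm:cond:synch2} with $\bar c_{\mathcal F}$ as in \eqref{barcF-no-coupling}, and conditions~(iv) of Theorem~\ref{thm:cond:synch2} and~(ii) of Theorem~\ref{thm:cond:synch3} with $\bar c_{\ino}$ and $\underbar c_{\ino}$ as in \eqref{barcK-no-coupling}--\eqref{underbarcK-no-coupling}. The remaining hypotheses --- conditions~(ii) and~(iii) of Theorem~\ref{thm:cond:synch2} and condition~(i) of Theorem~\ref{thm:cond:synch3} --- involve only $\mathcal H$ and $\nc$ and hold vacuously here: for the zero functions the antisymmetry requirements $\mathcal H(x,y)=-\mathcal H(y,x)$ and $\nc(x,y)=\pm\nc(y,x)$ are trivially true, and one may take $\underbar c_{\mathcal H}=\bar c_{\nc}=\underbar c_{\nc}=0$.

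With $\epsilon=\delta=0$, formula \eqref{thm:c} collapses to $c=-2\bar c_{\mathcal F}-\sigma^2\bar c_{\ino}^2$, and the constants of Theorem~\ref{thm:cond:synch3} reduce to $\alpha_1=-c/2=\bar c_{\mathcal F}+\tfrac{\sigma^2}{2}\bar c_{\ino}^2$ and $\alpha_2^2=\sigma^2\underbar c_{\ino}^2$ (the $\delta$--term vanishes). I would then distinguish two cases. If $c>0$, Theorem~\ref{thm:cond:synch2} directly yields mean--square synchronization; moreover $-2\bar c_{\mathcal F}+\sigma^2(2\underbar c_{\ino}^2-\bar c_{\ino}^2)=c+2\sigma^2\underbar c_{\ino}^2\ge c>0$, so the proposition's hypothesis is automatically in force. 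If $c\le0$, then $\alpha_1\ge0$, and the requirement $\alpha_1<\alpha_2^2$ of Theorem~\ref{thm:cond:synch3} (equivalently Lemma~\ref{lemma:cond:synch3}) reads $\bar c_{\mathcal F}+\tfrac{\sigma^2}{2}\bar c_{\ino}^2<\sigma^2\underbar c_{\ino}^2$, which rearranges exactly to $-2\bar c_{\mathcal F}+\sigma^2(2\underbar c_{\ino}^2-\bar c_{\ino}^2)>0$; Theorem~\ref{thm:cond:synch3} then delivers $p$--th moment synchronization for every $p\in(0,2(1-\alpha_1/\alpha_2^2))$. Thus in both cases the stated inequality implies stochastic synchronization, and the equivalent form $\sigma^2(2\underbar c_{\ino}^2-\bar c_{\ino}^2-\bar\lambda_{\mathcal F})$ follows from $\sigma^2\bar\lambda_{\mathcal F}=2\bar c_{\mathcal F}$.

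I do not anticipate a real obstacle, since the analytic core --- the Lyapunov estimate --- is internal to Theorems~\ref{thm:cond:synch2}--\ref{thm:cond:synch3}, and the verification of their conditions for the phase model was done in the paragraph preceding the statement. The points needing mild care are: (a) confirming that the hypotheses on $\mathcal H$ and $\nc$ not touched there are genuinely vacuous with zero constants; (b) handling the two regimes $c>0$ and $c\le0$ so that the single clean inequality in the proposition is seen to suffice for both (via Theorem~\ref{thm:cond:synch2} in the first, Theorem~\ref{thm:cond:synch3} in the second); and (c) noting that the explicit time dependence of $\mathcal F$ and $\ino$ through $\omega t+\phi$ is precisely the dependence the $t$--augmented theorems are built to accommodate, with finiteness of the constants coming from continuity on a compact set.
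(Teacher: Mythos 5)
Your proposal is correct and follows essentially the same route as the paper: the paper's ``proof'' consists precisely of verifying conditions (i) and (iv) of Theorem~\ref{thm:cond:synch2} and condition (ii) of Theorem~\ref{thm:cond:synch3} with the constants \eqref{barcF-no-coupling}--\eqref{underbarcK-no-coupling} and then invoking those theorems with $\epsilon=\delta=0$, which is exactly what you do. Your explicit case split between $c>0$ (Theorem~\ref{thm:cond:synch2}) and $c\le 0$ (Theorem~\ref{thm:cond:synch3}, where the stated inequality is exactly $\alpha_1<\alpha_2^2$) and your remark that the hypotheses on $\mathcal H$ and $\nc$ are vacuous with zero constants are left implicit in the paper, so your write-up is, if anything, slightly more careful.
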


Note that for an appropriate choice of  $\alpha$, the corresponding level set of 
 the Lyapunov function  $\frac{1}{2}e^\top e$, denoted by $\mathscr{L}_{\alpha} := \{\phi_i\;|\: \frac{1}{2}e^\top e = \alpha\}$ becomes a subset of  $(-\frac{\pi}{2}, \frac{\pi}{2})^N$. Therefore, since the Lyapunov function is decreasing,  $\mathscr{L}_{\alpha}$ becomes an invariant set and the choice of the Lyapunov function remains valid. 

\begin{example}
	We consider three Van der Pol oscillators~\eqref{eq:vanderpol} subject to the common noise $W_1$ and $W_2$ with intensity $\sigma =0.5$. The simulation results are shown in Figure~\ref{fig:vanderpol-noise}. The state of oscillator $i$ is denoted by $(x_1^i, x_2^i)$.
	The applied noise is removed once synchronization is achieved. In this example, $K$ is chosen as an identity matrix and thus $\mc F  = \omega + \frac{\sigma^2}{2} \mathrm{tr}(H)$ and $\mc K = \sqrt{\bs{Z}^\top \bs{Z}}$. It can be verified that condition of Proposition~\ref{prop:noise-induced-sync} is satisfied in a neighborhood of $\phi=0$ but it is not satisfied for every $\phi \in (-\frac{\pi}{2}, \frac{\pi}{2})$. This illustrates that synchronization may be achieved beyond regimes obtained by the sufficient conditions in Proposition~\ref{prop:noise-induced-sync}.

	\begin{figure}[ht!]
		\subfigure[Noise-induced synchronization]{
			\includegraphics[width=0.45\textwidth]{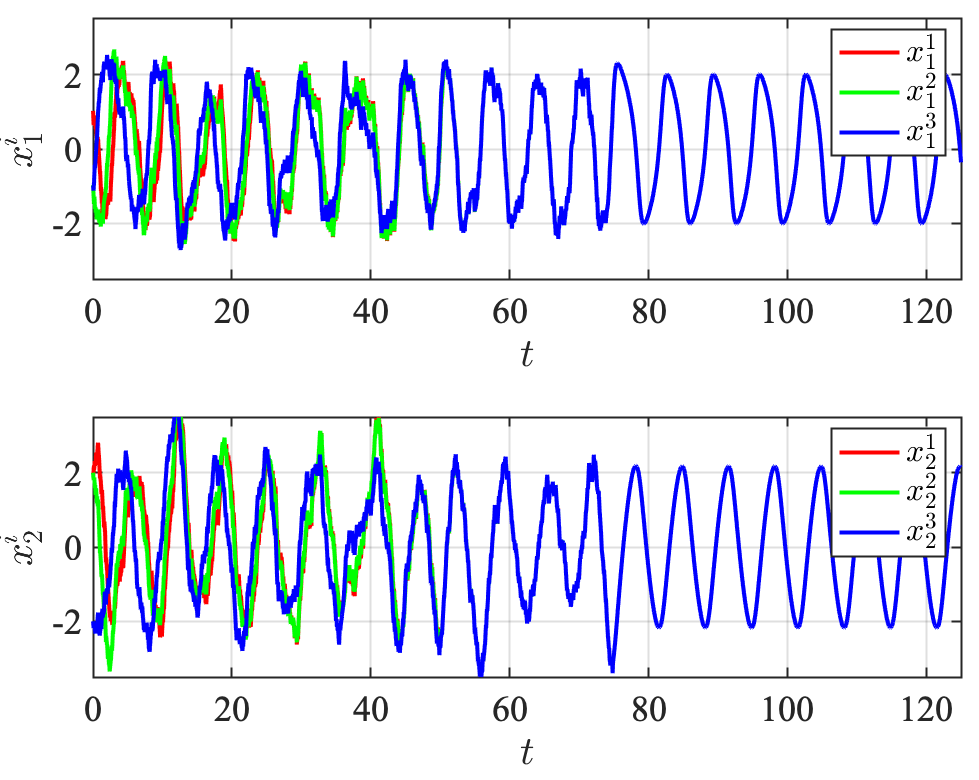}	
			\label{fig:vanderpol-noise}
		}
			\subfigure[Coupling-induced synchronization]{
		\includegraphics[width=0.45\textwidth]{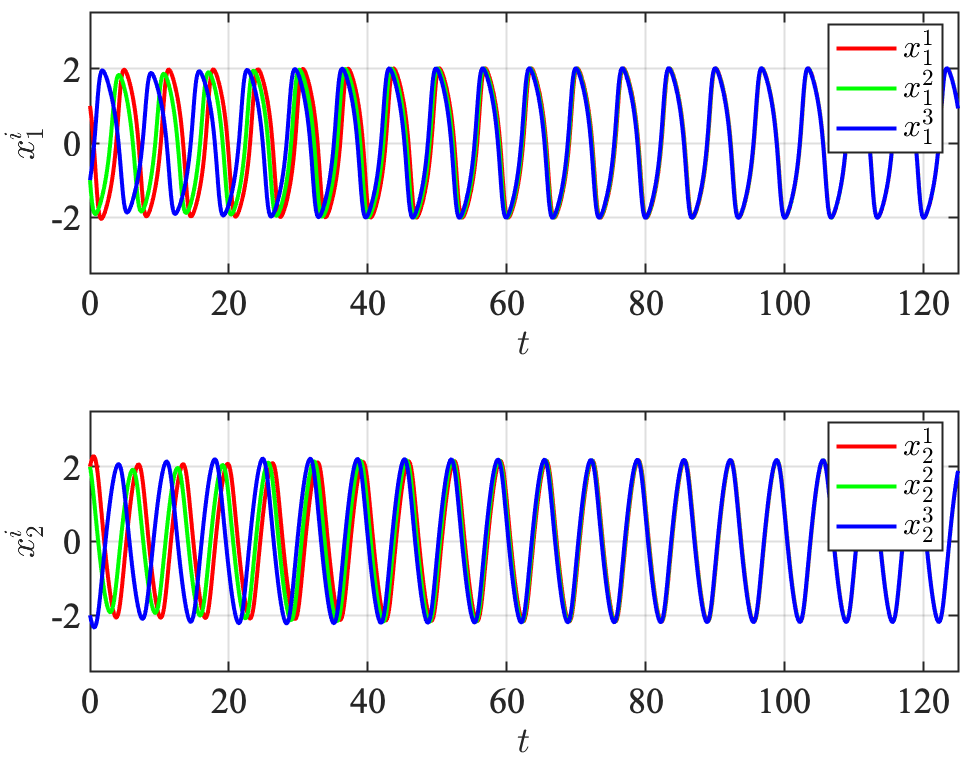}	\label{fig:vanderpol-coupling}
	}
		\caption{Synchronization of three Van der Pol Oscillators. (a) Three oscillators subject to the same common noise. Noise is removed once the synchronization is achieved. (b) Three coupled Van der Pol oscillators interacting on a line graph with diffusive coupling.    \label{fig:example-sync-vanderpol}}
	\end{figure}
\end{example}

\vskip .2in 

\textbf{Case 2. Deterministic coupling and deterministic averaging theory.} Next, we consider $N$ oscillators which are connected through a common noise and a deterministic coupling among themselves, i.e., consider \eqref{eq:full-couplde-SDE} with nonzero coupling $\H$, zero stochastic coupling $\C$, and as discussed in Case 1, we let $\sigma\ino \;dW$ describes the common noise:
\begin{align}\label{eq:couplde-SDE-no-delta}
 d \theta_i & =  
 \left(\omega +
  \frac{\sigma^2 }{2}  \mathrm{tr}[\B^\top \bs H \B] +\epsilon\sum_j c_{ij} \bs Z^\top \H\right)\;dt +\sigma\ino \;dW(t). 
\end{align}
Note that in order to use the results of Theorems \ref{thm:cond:synch2}-\ref{thm:cond:synch4}, the coupling function $\bs Z^\top \H$ must satisfy $\bs Z(x)^\top \H(y,x)= - \bs Z(y)^\top \H(x,y)$. In what follows, we show that if we use the Averaging Theory (\cite[Theorem 4.1.1]{guckenheimer2013nonlinear}), we can approximate $\bs Z(x)^\top \H(y,x)$ by $\mathcal{H}(y-x)$ ($\mathcal H$ will be defined in \eqref{barH} below). Then, by  assuming  
$\H(y,x)= - \H(x,y)$, we can conclude that 
$\mathcal{H}(y-x) = - \mathcal{H}(x-y)$, as desired. 

For any $i=1,\ldots, N$, let 
\begin{align}\label{eq:phi}
\phi_i (t)=
 \theta_i(t) -
  \omega t - 
  \frac{\sigma^2 }{2} \int_0^t \mathrm{tr}[\B^\top \bs H \B](\theta_i(\tau)) \;d\tau -
  \sigma \int_0^t \ino(\theta_i(\tau)) \;d  W(\tau). 
\end{align}
Then, for any $i$,
\begin{align}\label{eq:d-phi}
 d \phi_i(t) & =  \epsilon\sum_j c_{ij}\bs Z(\theta_i(t))^\top \H(\gamma(\theta_j(t)), \gamma(\theta_i(t))). 
 \end{align}
Note that  $d \phi_i(t)$ is of order $\epsilon$, and in \eqref{order-coupling-noise}, we assumed that $\mathcal{O}(\epsilon) = \mathcal{O}(\sigma^2)$. Therefore, to keep   $d \phi_i(t)$ of order $\epsilon$, we approximate the arguments $\theta_i$ and $\theta_j$ in the right hand side of $d \phi_i(t)$ by $\omega t + \phi_i$ and $\omega t + \phi_j$, respectively, and ignore the terms of order $\sigma$ and $\sigma^2$:
\begin{align}\label{eq:phi:before:averaging}
 d \phi_i(t) & =  \epsilon\sum_j c_{ij}\bs Z(\omega t+\phi_i(t))^\top \H(\gamma(\omega t+\phi_j(t)), \gamma(\omega t+\phi_i(t))). 
 \end{align}
Applying Averaging Theory to \eqref{eq:phi:before:averaging}, we get the following approximation of order $\epsilon$ for $ d \phi_i(t)$:
\begin{align}\label{eq:d-phi:after:averaging}
 d \phi_i(t) & =  \epsilon\sum_j c_{ij} \mathcal{H}(\phi_j-\phi_i), 
 \end{align}
 where 
\begin{align}\label{barH}
\mathcal{H}(\phi) = \dfrac{1}{2\pi}\int_0^{2\pi}  \bs Z(\xi)^\top \H(\gamma(\xi+ \phi), \gamma(\xi)) \;d\xi. 
 \end{align}
Combining \eqref{eq:couplde-SDE-no-delta}, \eqref{eq:phi}, and \eqref{eq:d-phi:after:averaging}, we get 
\begin{align*}\label{eq:couplde-SDE-no-delta-averaging}
 d \theta_i & =  
 \left(\omega +
  \frac{\sigma^2 }{2}  \mathrm{tr}[\B^\top \bs H \B] (\theta_i) +\epsilon\sum_j c_{ij} \mathcal H(\theta_j-\theta_i) \right)\;dt +\sigma \ino(\theta_i) \;d  W(t). 
\end{align*}
Note that we used 
$\epsilon \mathcal H(\psi_j-\psi_i) \approx \epsilon \mathcal H(\theta_j-\theta_i)$, since we are only interested in terms of order $\epsilon$. 

Now we check the conditions of Theorems \ref{thm:cond:synch2}-\ref{thm:cond:synch3}, where 
$\mathcal F = \omega +
  \frac{\sigma^2 }{2}  \mathrm{tr}[\B^\top \bs H \B]$ 
and $\mathcal{H}$ is as defined in \eqref{barH}. 

\begin{description}[leftmargin=*]
\item[Condition i of Theorem \ref{thm:cond:synch2}.] 
Assume that $K$ is differentiable and let 
\begin{equation}\label{barcF-deterministic-coupling}
\hat c_{\mathcal{F}} = {\sup_{\phi \in (-\frac{\pi}{2}, \frac{\pi}{2})}} \frac{N}{2}   \lambda_{\max} \left[\frac{\partial }{\partial \phi}  \left(\B(\gamma(\phi))^\top  \bs H(\phi) \B(\gamma(\phi))\right)\right], 
\end{equation}
where $\lambda_{\max}[A]$ denotes the maximum eigenvalue of $A$.  Then, for $\phi, \eta\in {(-\frac{\pi}{2}, \frac{\pi}{2})}$
\begin{equation*}
(\phi-\eta) (\mathcal{F}(\phi)-  \mathcal{F}(\eta))
\leq  \bar c_{\mathcal{F}}(\phi-\eta)^2, \quad  \bar c_{\mathcal{F}} = \sigma^2 \hat c_{\mathcal{F}}. 
\end{equation*}
We used $\mathrm{tr}[A] = \sum_{i=1}^N e_i^\top A e_i$,  where $e_i$s are the standard basis of $\mathbb R^N$, and 
$e_i^\top A e_i \leq \lambda_{\max}[A] e_i^\top e_i.$

\item[Condition ii of Theorem \ref{thm:cond:synch2}.] 
Assume that $\H( x, y) = -\H( y, x)$ and let 
\begin{equation}\label{underbarcH-deterministic-coupling}
 \ \underbar c_{\mathcal{H}}
 = \inf_{\phi{\in(-\frac{\pi}{2}, \frac{\pi}{2})}} \frac{1}{2\pi\phi^2} \int_0^{2\pi} \phi \;\bs Z(\xi)^\top \H(\gamma(\xi+\phi), \gamma(\xi))\;d\xi. 
\end{equation}
Then $\mathcal{H}(\phi)$ becomes an odd function and  $\phi\mathcal{H} (\phi) \geq  \underbar c_{\mathcal H}\phi^2.  $

\item[Condition iv of Theorem \ref{thm:cond:synch2} and condition ii of Theorem \ref{thm:cond:synch3}.] 
Assume that $\B$ is differentiable. Then 
$\ino(\phi,t) = \sqrt{\bs Z(\omega t+\phi)^\top (\B\B^\top)(\gamma(\omega t+\phi))\bs Z(\omega t+\phi)}$
becomes differentiable and  for any $\phi,\eta \in (-\frac{\pi}{2}, \frac{\pi}{2})$ and $t\in[0,T)$, 
\begin{equation}\label{barcK-deterministic-coupling}
|\ino(\phi,t)-\ino(\eta,t)| \leq   \bar c_{\ino}  |\phi-\eta|,
 \quad \text{where}\quad
\bar c_{\ino} :=  
{\sup_{(\phi,t) \in (-\frac{\pi}{2}, \frac{\pi}{2})\times[0,T)}}
\frac{\partial }{\partial \phi} \ino(\phi,t),
\end{equation}
and 
\begin{equation}\label{underbarcK-deterministic-coupling}
 \underbar c_{\ino}  (\phi-\eta)^2 \leq (\phi-\eta) (\ino(\phi,t)-\ino(\eta,t)),
 \quad \text{where}\quad
 \underbar c_{\ino} :=  {\inf_{(\phi,t) \in (-\frac{\pi}{2}, \frac{\pi}{2})\times[0,T)}}
\frac{\partial }{\partial \phi}  \ino(\phi,t). 
\end{equation}

\end{description}

In summary, we proved the following proposition. 

\begin{proposition}\label{prop:synch-deterministic-coupling}
Consider  \eqref{eq:couplde-SDE-no-delta}. Assume $\B$ is differentiable and $\H( x, y) = -\H( y, x)$. Then \eqref{eq:couplde-SDE-no-delta} stochastically synchronizes if  one of the following conditions hold:
\begin{enumerate}
\item $c: =-2\sigma^2 \hat  c_{\mathcal{F}} + {2}\epsilon \underbar c_{\mathcal{H}}  \lambda- \sigma^2 \bar c_{\ino}^2>0,$
\item $c<0$  but $c+ 2 \sigma^2 \underbar c_{\ino}^2>0,$
\end{enumerate}
where the constant bounds are defined in \eqref{barcF-deterministic-coupling}-\eqref{underbarcK-deterministic-coupling}. 
\end{proposition}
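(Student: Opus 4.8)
The plan is to obtain Proposition~\ref{prop:synch-deterministic-coupling} as a specialization of Theorems~\ref{thm:cond:synch2} and~\ref{thm:cond:synch3}, applied to the averaged phase dynamics derived just above. First I would record that, once the averaging reduction has been carried out, \eqref{eq:couplde-SDE-no-delta} (with the coupling term $\epsilon\sum_j c_{ij}\bs Z^\top\H$ replaced by $\epsilon\sum_j c_{ij}\mathcal H(\theta_j-\theta_i)$, $\mathcal H$ as in \eqref{barH}) is precisely of the form~\eqref{eq:general-form2}: scalar state ($n=1$), intrinsic drift $\mathcal F(\theta)=\omega+\frac{\sigma^2}{2}\mathrm{tr}[\B^\top\bs H\B]$, scalar common-noise coefficient $\mathcal K(\theta)=\sqrt{\bs Z^\top\B\B^\top\bs Z}$, averaged scalar coupling $\mathcal H$, and no stochastic coupling ($\delta=0$). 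Consequently condition (iii) of Theorem~\ref{thm:cond:synch2} and condition (i) of Theorem~\ref{thm:cond:synch3} hold trivially, and the $\delta^2$-terms in both~\eqref{thm:c} and in $\alpha_2^2$ of Theorem~\ref{thm:cond:synch3} drop out.

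Second I would invoke the four displayed verifications immediately preceding the statement, which establish the remaining hypotheses. Condition (i) of Theorem~\ref{thm:cond:synch2} holds with $\bar c_{\mathcal F}=\sigma^2\hat c_{\mathcal F}$, $\hat c_{\mathcal F}$ from~\eqref{barcF-deterministic-coupling}, by the mean value theorem applied to $\phi\mapsto\mathrm{tr}[\B^\top\bs H\B](\gamma(\phi))$ together with the elementary trace bound used there. Condition (ii) of Theorem~\ref{thm:cond:synch2} holds because $\H(x,y)=-\H(y,x)$ forces the averaged coupling $\mathcal H$ of~\eqref{barH} to be odd (substitute $\xi\mapsto\xi+\phi$ in the integral), while the lower bound $\phi\,\mathcal H(\phi)\ge\underbar c_{\mathcal H}\,\phi^2$ holds by the very definition of $\underbar c_{\mathcal H}$ in~\eqref{underbarcH-deterministic-coupling}. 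Finally, condition (iv) of Theorem~\ref{thm:cond:synch2} and condition (ii) of Theorem~\ref{thm:cond:synch3} hold with $\bar c_{\mathcal K}$ and $\underbar c_{\mathcal K}$ from~\eqref{barcK-deterministic-coupling}--\eqref{underbarcK-deterministic-coupling}, again by the mean value theorem applied to $\phi\mapsto\mathcal K(\phi,t)$, which is differentiable since $\B$ is.

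Third I would split into the two cases. In Case~1 the constant $c$ of~\eqref{thm:c} specializes to $c=-2\sigma^2\hat c_{\mathcal F}+2\epsilon\,\underbar c_{\mathcal H}\,\lambda-\sigma^2\bar c_{\mathcal K}^2$ (with $\lambda$ chosen as in Theorem~\ref{thm:cond:synch2}), so $c>0$ gives synchronization in mean square by Theorem~\ref{thm:cond:synch2}. In Case~2 I would apply Theorem~\ref{thm:cond:synch3}: with $\delta=0$ one has $\alpha_1=-c/2$ and $\alpha_2^2=\sigma^2\underbar c_{\mathcal K}^2$, so the hypothesis ``$c<0$ but $c+2\sigma^2\underbar c_{\mathcal K}^2>0$'' is exactly $0<\alpha_1<\alpha_2^2$, and Theorem~\ref{thm:cond:synch3} yields $p$-th moment synchronization for $0<p<2(1-\alpha_1/\alpha_2^2)$. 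To legitimize working on the bounded phase window, I would note---as in the remark after Proposition~\ref{prop:noise-induced-sync}---that for sufficiently small $\alpha$ the sublevel set $\{\frac12 e^\top e=\alpha\}$ lies in $(-\frac\pi2,\frac\pi2)^N$ and is forward invariant since $\frac12 e^\top e$ is non-increasing, so all the bounds apply along trajectories started near the synchronization manifold.

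The main obstacle lies not in this final assembly, which is essentially bookkeeping, but in the two inputs it relies on. The first is the legitimacy of the averaging step itself: that the $\mathcal O(\epsilon)$ behavior of the averaged equation~\eqref{eq:d-phi:after:averaging} governs that of the slow variable~\eqref{eq:phi}. This rests on \cite[Theorem~4.1.1]{guckenheimer2013nonlinear} and on the scaling~\eqref{order-coupling-noise}, which makes the $\sigma$- and $\sigma^2$-corrections to the arguments $\theta_i$ higher order relative to the $\mathcal O(\epsilon)$ coupling term. The second is the smoothness of $\mathcal K(\phi,t)=\sqrt{\bs Z^\top\B\B^\top\bs Z}$, which I would record as a standing nondegeneracy assumption that $\bs Z^\top\B\B^\top\bs Z$ stays bounded away from $0$ on the relevant phase window (automatic when $\B$ is invertible along the limit cycle). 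I would flag both explicitly rather than expand them.
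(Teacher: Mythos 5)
Your proposal follows essentially the same route as the paper: the paper's "proof" of this proposition is exactly the Case~2 derivation preceding it (deterministic averaging to replace $\bs Z^\top\H$ by the odd averaged coupling $\mathcal H$, verification of conditions (i), (ii), (iv) of Theorem~\ref{thm:cond:synch2} and (ii) of Theorem~\ref{thm:cond:synch3} with the constants \eqref{barcF-deterministic-coupling}--\eqref{underbarcK-deterministic-coupling}, and then reading off case~1 from Theorem~\ref{thm:cond:synch2} and case~2 from Theorem~\ref{thm:cond:synch3} with $\delta=0$, so that $\alpha_1=-c/2$ and $\alpha_2^2=\sigma^2\underbar c_{\ino}^2$). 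The two caveats you flag --- the validity of the averaging step under the scaling \eqref{order-coupling-noise}, and the nondegeneracy of $\bs Z^\top\B\B^\top\bs Z$ needed for $\ino=\sqrt{\bs Z^\top\B\B^\top\bs Z}$ to be differentiable --- are likewise left implicit in the paper, so your treatment is faithful to (and slightly more careful than) the original.
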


\begin{example}
		We now consider three diffusively coupled Van der Pol oscillators~\eqref{eq:vanderpol} interacting on a line graph. Let the state of oscillator $i$ be denoted by $(x_1^i, x_2^i)$. Then, the coupling input is chosen to enter only $x_1^i$ dynamics and is selected as $\epsilon \sum_{j \in \mc N_i} (x_1^j  - x_1^i)$, where $\epsilon=0.1$ is the coupling strength and $\mc N_i$ is the set of neighbors of oscillator $i$. The simulation results are shown in Figure~\ref{fig:vanderpol-coupling}. It is easy to verify that since the underlying interaction graph is connected, the conditions of Proposition~\ref{prop:synch-deterministic-coupling} are satisfied and synchronization is achieved. 
\end{example}

\vskip .2in

\textbf{Case 3. Stochastic coupling and stochastic averaging theory.}  Finally, we consider $N$ oscillators which are connected through a common noise and  deterministic and stochastic coupling among themselves, i.e., consider \eqref{eq:full-couplde-SDE} with nonzero coupling $\H$ and nonzero stochastic coupling $\C$:
\begin{align}\label{eq:couplde-SDE-with-delta-m-dim}
d \theta_i & =  \left(\omega+\epsilon\sum_j c_{ij} \bs Z^\top \H +
  \frac{\sigma^2 }{2}  \mathrm{tr}[\B^\top \bs H \B]+  \frac{\delta^2 }{2} \mathrm{tr}\Big[\sum_j c_{ij}^2 \C^\top \bs H \C\Big] \right)\;dt \nonumber
\\
 &\quad+  \sigma  \bs Z^\top \B  \;d  W_i(t) + \delta\sum_j c_{ij} \bs Z^\top\C \;d  W_{ij}(t). 
\end{align}
Similar to Case 1, to apply Theorems \ref{thm:cond:synch2}-\ref{thm:cond:synch4}, \eqref{eq:couplde-SDE-with-delta} must be of the format of \eqref{eq:general-form2}, with one dimensional $dW_i$ and $dW_{ij}$. Assume identical $dW_i$s and let $\ino$ and $dW$ be as defined in Case 1. Also, let   
\[\hat \nc (\theta_i,\theta_j) = \sqrt{\bs Z(\theta_i)^\top (\C\C^\top)(\gamma(\theta_j), \gamma(\theta_i))\bs Z(\theta_i)},\]
 and write \eqref{eq:couplde-SDE-with-delta-m-dim} as
\begin{align}\label{eq:couplde-SDE-with-delta}
d \theta_i & =  \left(\omega+\epsilon\sum_j c_{ij} \bs Z^\top \H +
  \frac{\sigma^2 }{2}  \mathrm{tr}[\B^\top \bs H \B]+  \frac{\delta^2 }{2} \mathrm{tr}\Big[\sum_j c_{ij}^2 \C^\top \bs H \C\Big] \right)\;dt 
+  \sigma  \ino  \;d  W(t) + \delta\sum_j c_{ij} \hat\nc \;d  W_{ij}(t), 
\end{align}
where $dW$ and $dW_{ij}$ (with a slight abuse of notation) are one dimensional Wiener increments and $\ino$ and $\hat\nc$ are scalars. 

For any $i=1,\ldots, N$, let 
\begin{align}\label{eq:psi}
\psi_i (t)=
 &\theta_i(t) -
  \omega t - 
  \frac{\sigma^2 }{2} \int_0^t \mathrm{tr}[\B^\top \bs H \B](\theta_i(\tau)) \;d\tau -
  \sigma \int_0^t \ino(\theta_i(\tau)) \;d  W(\tau). 
\end{align}
Then, for any $i$,
\begin{align*}
 d \psi_i(t) & =  \epsilon\sum_j c_{ij}\bs Z(\theta_i(t))^\top \H(\gamma(\theta_j(t)), \gamma(\theta_i(t)))\;dt+
 \frac{\delta^2 }{2}  \mathrm{tr}\Big[\sum_j c_{ij}^2 \C(\gamma(\theta_j), \gamma(\theta_i))^\top \bs H(\theta_i(\tau)) \C(\gamma(\theta_j), \gamma(\theta_i))\Big]\;dt \nonumber\\
 &\quad+
  \delta\sum_j c_{ij} \hat\nc(\theta_j(t), \theta_i(t)) \;d  W_{ij}(t). 
 \end{align*}
Note that  $d \psi_i(t)$ has two deterministic terms of order $\epsilon$ and one stochastic term of order $\sqrt\epsilon$, since in \eqref{order-coupling-noise}, we assumed that $\mathcal{O}(\delta)= \mathcal{O}(\sqrt\epsilon)$. 
Therefore, to keep $d \psi_i(t)$ of order $\epsilon$ in the deterministic terms and of order $\mathcal{O}(\delta)$ in the stochastic term,  we approximate the arguments $\theta_i(t)$
and $\theta_j(t)$ in the right hand side of $d \psi_i(t)$ by $\omega t + \psi_i$ and $\omega t + \psi_j$, respectively, and ignore the terms of order $\sigma$ and $\sigma^2$. 
\begin{align}\label{eq:d-psi:before:averaging}
 d \psi_i(t) & =  \epsilon\sum_j c_{ij}\bs Z(\omega t+\psi_i(t))^\top \H(\gamma(\omega t+\psi_j(t), \gamma(\omega t+\psi_i(t)))\;dt\nonumber \\
  &\quad+ \frac{\delta^2 }{2}  \mathrm{tr}\Big[\sum_j c_{ij}^2 \C(\gamma(\omega t+\psi_j(t)), \gamma(\omega t+\psi_i(t)))^\top \bs H(\omega t+\psi_i(t)(\tau)) \C(\gamma(\omega t+\psi_j(t)), \gamma(\omega t+\psi_i(t)))\Big] \;dt\nonumber \\
 &\quad+ \delta\sum_j c_{ij} \hat\nc(\omega t+\psi_j(t), \omega t+\psi_i(t)) \;d  W_{ij}(t). 
 \end{align}
 
 Similar to the argument that we had below \eqref{eq:couplde-SDE-no-delta}, here we use an averaging theory to approximate  \eqref{eq:d-psi:before:averaging} so that the conditions of Theorems \ref{thm:cond:synch2}-\ref{thm:cond:synch4} hold. Since \eqref{eq:d-psi:before:averaging} is an SDE, we employ a stochastic version of averaging theory, as described below. 
 
In the following proposition, which is slightly modified from the materials in 
\cite[Chapter 7, Section 9]{Random_perturbations_DS_book} \footnote{The non-autonomous SDE \eqref{SDE:AveragingPrinciple} is equivalent to
 \cite[Equation (9.1)]{Random_perturbations_DS_book} 
  when $l=1, B=1, C=0$, and $y=0$. Also, \eqref{alpha-bar} and \eqref{beta-bar} are respectively equivalent to  \cite[Equations (9.3) and (9.10)]{Random_perturbations_DS_book}.},
 we state the Averaging Theory for SDEs,  analogs to the Averaging Theory for ODEs.  

\begin{proposition}[\bit{An averaging principle for SDEs}]\label{Prop:AveragingPrinciple}
Consider the time dependent stochastic differential equation
 \begin{equation}\label{SDE:AveragingPrinciple}
 d X = \varepsilon \alpha(X,t) dt + \sqrt \varepsilon \beta (X,t) dW^{(n)}(t),
 \end{equation} 
 where $X\in \real^r, {\alpha: \real^r\times(-\infty,\infty)\mapsto \real^r, \beta: \real^r\times(-\infty,\infty)\mapsto \real^{r\times n}}$, $W^{(n)}$ is an $n-$dimensional Wiener process, and $\varepsilon$ is a  small time scale,  $0<\varepsilon \ll1$. 
Assume that the entries of $\alpha(X,t)$ and the diffusion matrix $\beta (X,t)$ are $T-$periodic functions on $t$ and let
 \begin{align}
\bar \alpha (X) &= \dfrac{1}{T} \int_0^T \alpha(X,s) \; ds, & {\bar\alpha: \real^r\mapsto\real^r}, \label{alpha-bar}\\
(\bar \beta (X))_{ij}&= \left(\dfrac{1}{T} \int_0^T \left(\beta(X,s) \beta^\top(X,s)\right)_{ij}  \; ds \right)^{\frac{1}{2}}, &{\bar\beta: \real^r\mapsto \real^{r\times r}}\label{beta-bar}, 
 \end{align} 
 be the averages  of $\alpha$ and $\beta$, respectively, where $(A)_{ij}$ denotes the $(i,j)^{th}$ entries of a matrix $A$.  Then, for a sufficiently small $\epsilon>0$,  any solution of the non-autonomous equation \eqref{SDE:AveragingPrinciple}, denoted by $X(t)$, can be approximated by $\bar X(t)$, a solution of the following autonomous equation
  \begin{equation}\label{AveSDE:AveragingPrinciple}
 d \bar X = \varepsilon \bar \alpha(X) dt +\sqrt\varepsilon \bar \beta (X) dW^{(r)}(t), 
 \end{equation} 
 where $W^{(r)}$ is an $r-$dimensional Wiener process. 
\end{proposition}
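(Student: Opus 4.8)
The plan is to reduce \eqref{SDE:AveragingPrinciple} to the classical averaging setup of \cite[Chapter 7, Section 9]{Random_perturbations_DS_book} by rescaling time, and then either quote that result directly or reprove the needed statement by a perturbed test function (corrector) argument. First I would pass to the slow time $\tau = \varepsilon t$ and set $\tilde X(\tau) := X(\tau/\varepsilon)$; by Brownian scaling $\tilde W(\tau) := \sqrt{\varepsilon}\,W^{(n)}(\tau/\varepsilon)$ is again a standard $n$-dimensional Wiener process, and \eqref{SDE:AveragingPrinciple} turns into the $\mathcal{O}(1)$ system with rapidly oscillating coefficients
\[
d\tilde X = \alpha\big(\tilde X,\tfrac{\tau}{\varepsilon}\big)\,d\tau + \beta\big(\tilde X,\tfrac{\tau}{\varepsilon}\big)\,d\tilde W(\tau).
\]
Here the ``fast variable'' is the deterministic phase $s = \tau/\varepsilon$ on the circle $\real/T\real$, whose flow $\dot s = 1/\varepsilon$ is uniquely ergodic with normalized Lebesgue measure $ds/T$ on $[0,T)$. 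Under the standing Lipschitz and linear-growth hypotheses on $\alpha$ and $\beta$ (inherited by the averaged coefficients), both \eqref{SDE:AveragingPrinciple} and \eqref{AveSDE:AveragingPrinciple} are well posed on any finite horizon, so the proposition becomes precisely an averaging statement for $\tilde X$: I would establish $\sup_{0\le \tau\le S}\|\tilde X(\tau)-\bar X(\tau)\| \to 0$ in probability as $\varepsilon\to 0$ (equivalently $\sup_{0\le t\le S/\varepsilon}\|X(t)-\bar X(t)\| \to 0$ in probability), where $\bar X$ solves $d\bar X = \bar\alpha(\bar X)\,d\tau + \bar\beta(\bar X)\,d\tilde W^{(r)}$ with $\bar\alpha$ and $\bar\beta$ the period averages \eqref{alpha-bar}--\eqref{beta-bar}, and then undo the time change to recover the statement in the original variable.

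For the self-contained route, the key point is a corrector estimate. Writing the generator of $(\tilde X,s)$ as $\mathcal{L}^\varepsilon = \varepsilon^{-1}\partial_s + L_s$ with $L_s = \alpha(x,s)\cdot\nabla_x + \tfrac12\,(\beta\beta^\top)(x,s):\nabla_x^2$, one observes that $L_s\varphi - \bar L\varphi$ has zero $s$-average over $[0,T)$ for every smooth $\varphi$, where $\bar L = \bar\alpha(x)\cdot\nabla_x + \tfrac12\,\overline{\beta\beta^\top}(x):\nabla_x^2$ and $\overline{\beta\beta^\top}(x) = \tfrac1T\int_0^T(\beta\beta^\top)(x,s)\,ds$. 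Hence the cell problem $\partial_s\chi(x,s) = \bar L\varphi(x) - L_s\varphi(x)$ has a $T$-periodic, mean-zero solution $\chi$, so the perturbed test function $\varphi^\varepsilon := \varphi + \varepsilon\chi$ satisfies $\mathcal{L}^\varepsilon\varphi^\varepsilon = \bar L\varphi + \mathcal{O}(\varepsilon)$ uniformly on compacts. A priori moment bounds then yield tightness of $\{\tilde X^\varepsilon\}$ in $C([0,S];\real^r)$; passing to the limit in the associated martingale problem shows that every limit point solves the martingale problem for $\bar L$; and well-posedness of that martingale problem (from the Lipschitz continuity of $\bar\alpha$ and of a square root of the averaged diffusion matrix $\overline{\beta\beta^\top}$, for which \eqref{beta-bar} provides a choice of diffusion coefficient $\bar\beta$ of the stated dimension $r$) pins the limit down as $\bar X$.

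The main obstacle is the mismatch with \cite[Chapter 7, Section 9]{Random_perturbations_DS_book}: there the fast component is a genuinely stochastic, uniformly ergodic diffusion, whereas here it degenerates to deterministic periodic time, so one must either (i) verify that the ergodicity and mixing conditions of the cited theorem hold for this degenerate fast component --- they do, trivially, for rigid rotation on the circle --- and then specialize its conclusion as the footnote indicates ($l=1$, $B=1$, $C=0$, $y=0$), or (ii) carry out the corrector argument above, whose only delicate point is uniqueness for the averaged martingale problem under exactly the regularity the paper already imposes on $\alpha$ and $\beta$. A secondary point to keep track of is the precise mode of convergence --- convergence in probability, uniform on slow-time intervals $[0,S]$, i.e.\ on original-time intervals of length $S/\varepsilon$ --- together with the $\sqrt{\varepsilon}$ bookkeeping that returns the averaged equation to the form \eqref{AveSDE:AveragingPrinciple}.
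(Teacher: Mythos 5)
The paper does not actually prove this proposition: it is stated as a specialization of \cite[Chapter 7, Section 9]{Random_perturbations_DS_book}, with the footnote matching the notation ($l=1$, $B=1$, $C=0$, $y=0$), and in the only place it is used the state is scalar ($r=1$). Your proposal instead supplies a self-contained argument --- rescale to slow time $\tau=\varepsilon t$, treat $s=\tau/\varepsilon$ on $\real/T\real$ as a uniquely ergodic deterministic fast variable, build a periodic mean-zero corrector $\chi$ solving $\partial_s\chi=\bar L\varphi-L_s\varphi$, and close via tightness plus uniqueness of the averaged martingale problem. This is the standard modern proof of periodic stochastic averaging and is a perfectly sound route; it buys you an actual proof where the paper only has a citation, and it correctly addresses the one point the paper elides, namely that Freidlin--Wentzell's fast component is an ergodic diffusion while here it degenerates to rigid rotation on a circle.

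Two caveats. First, the mode of convergence you announce is too strong: you claim $\sup_{0\le\tau\le S}\|\tilde X(\tau)-\bar X(\tau)\|\to 0$ in probability, but the martingale-problem machinery you then describe delivers weak convergence of $\tilde X^\varepsilon$ to the law of $\bar X$ in $C([0,S];\real^r)$, which is also what \cite[Ch.~7, Thm.~9.1]{Random_perturbations_DS_book} gives. When the diffusion coefficient is genuinely averaged, the limit is driven by a Brownian motion $W^{(r)}$ that is not a functional of the original $W^{(n)}$, so pathwise (in-probability) closeness to a single realization of $\bar X$ is not available in general; only the drift-averaging part admits a pathwise statement. You flag this as a ``secondary point,'' but it should be corrected, not merely tracked. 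Second, the entrywise square root in \eqref{beta-bar} is not a matrix square root of $\overline{\beta\beta^\top}$ when $r>1$; the correct requirement is any $\bar\beta$ with $\bar\beta\bar\beta^\top=\overline{\beta\beta^\top}$ (the law of $\bar X$ is independent of the choice). Your parenthetical quietly assumes this reading; it is harmless for the paper's application since there $r=1$, but worth stating explicitly if you keep the general $r$.
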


We apply Proposition \ref{Prop:AveragingPrinciple}, with $r=1$,  to the scalar SDE \eqref{eq:d-psi:before:averaging} and obtain:
 \begin{align}\label{eq:d-psi:after:averaging}
d \psi_i \;=\; \epsilon \displaystyle\sum_{j=1}^N c_{ij}\mathcal{H}_1(\psi_j-\psi_i) \; dt
+\delta^2\displaystyle\sum_{j=1}^N c_{ij}^2\mathcal{H}_2(\psi_j-\psi_i) \; dt
+\delta  \displaystyle\sum_{j=1}^N c_{ij}\nc (\psi_j-\psi_i) d  W_{ij}(t), 
   \end{align}
where the drift and the diffusion terms are
\begin{align}
\mathcal{H}_1(\phi) &=  \dfrac{1}{{2\pi}}  \int_0^{2\pi} \bs Z(\xi)^\top \H(\gamma(\xi+\phi), \gamma(\xi))\;d\xi, \label{phase:coef:H1}
\\
\mathcal{H}_2(\phi) &= \dfrac{1}{4\pi}   \int_0^{2\pi} \mathrm{tr}\big[\C(\gamma(\xi+\phi), \gamma(\xi))^\top  \bs H(\xi) \C(\gamma(\xi+\phi), \gamma(\xi))\big] \; d\xi\label{phase:coef:H2},
\\
\nc^2 (\phi) &=  \dfrac{1}{2\pi}   \int_0^{2\pi}  \hat\nc^2(\xi+\phi, \xi) \;d\xi
                    = \dfrac{1}{2\pi}   \int_0^{2\pi}  \bs Z(\xi)^\top (\C\C^\top)(\gamma(\xi+\phi), \gamma(\xi))\bs Z(\xi)\;d\xi
\label{phase:coef:K2}.
\end{align}

Combining \eqref{eq:couplde-SDE-with-delta}, \eqref{eq:psi}, and \eqref{eq:d-psi:after:averaging}, we get 
\begin{align}\label{eq:couplde-SDE-with-delta-averaging}
d \theta_i & =  \left(\omega+
  \frac{\sigma^2 }{2}  \mathrm{tr}[\B^\top \bs H \B]+
   {\delta^2 } \sum_j c_{ij}^2 \mathcal{H}_2(\theta_j-\theta_i) +
  \epsilon\sum_j c_{ij} \mathcal{H}_1(\theta_j-\theta_i)
   \right)\;dt \nonumber
\\
 &\quad+  \sigma \ino(\theta_i) \;d  W(t) + \delta\sum_j c_{ij} \nc(\theta_j-\theta_i) \;d  W_{ij}(t). 
\end{align}
Note that we used 
$ \mathcal H_i(\psi_j-\psi_i) \approx  \mathcal H_i(\theta_j-\theta_i)$, since we are only interested in deterministic terms of order $\epsilon$ and used $ \nc(\psi_j-\psi_i) \approx  \nc(\theta_j-\theta_i)$, since we are only interested in stochastic terms of order $\sqrt\epsilon$ (or $\delta$). 

Now we check the conditions of Theorems \ref{thm:cond:synch2}-\ref{thm:cond:synch3}, where 
$\mathcal F = \omega +
  \frac{\sigma^2 }{2}  \mathrm{tr}[\B^\top \bs H \B]$ 
and $\nc$ is as defined in \eqref{phase:coef:K2}. 
 Here, we assume that $c_{ij}=1$, then  $\mathcal{H} = \mathcal{H}_1+\mathcal{H}_2$ where $\mathcal{H}_i$s are as defined in \eqref{phase:coef:H1}-\eqref{phase:coef:H2}. 
 
\begin{description}[leftmargin=*]
\item[Condition i of Theorem \ref{thm:cond:synch2}.] 
Assume that $K$ is differentiable and let 
\begin{equation}\label{barcF}
\hat c_{\mathcal{F}} = {\sup_{\phi \in (-\frac{\pi}{2}, \frac{\pi}{2})}} \frac{N}{2}   \lambda_{\max} \left[\frac{\partial }{\partial \phi}  \left(\B(\gamma(\phi))^\top  \bs H(\phi) \B(\gamma(\phi))\right)
\right], 
\end{equation}
where $\lambda_{\max}[A]$ denotes the maximum eigenvalue of $A$.  Then, for $\phi, \eta\in {(-\frac{\pi}{2}, \frac{\pi}{2})}$
\begin{equation*}
(\phi-\eta) (\mathcal{F}(\phi)-  \mathcal{F}(\eta))
\leq  \bar c_{\mathcal{F}}(\phi-\eta)^2, \quad  \bar c_{\mathcal{F}} = \sigma^2 \hat c_{\mathcal{F}}. 
\end{equation*}
We used $\mathrm{tr}[A] = \sum_{i=1}^N e_i^\top A e_i$,  where $e_i$s are the standard basis of $\mathbb R^N$, and 
$e_i^\top A e_i \leq \lambda_{\max}[A] e_i^\top e_i.$

\item[Condition ii of Theorem \ref{thm:cond:synch2}.] 
Assume that $H$ and $\C$ are appropriate functions such that $\mathcal H=\mathcal H_1 +\mathcal H_2$ is an odd function. Then $\phi\mathcal{H} (\phi) \geq  \underbar c_{\mathcal H}\phi^2,$ where 
\begin{equation}\label{underbarcH}
 \underbar c_{\mathcal{H}}
 = \inf_{\phi\neq0{\in(-\frac{\pi}{2}, \frac{\pi}{2})}}
 \frac{\phi\mathcal{H}_1(\phi)+\phi\mathcal{H}_2(\phi)}{\phi^2},
\end{equation}

\item[Condition iii of Theorem \ref{thm:cond:synch2}.] 
$|\nc(\phi)| \leq \bar c_{\nc} |\phi|$, where
\begin{equation}\label{barC}
 \bar c_{\nc}:= \sup_{\phi\neq0\in(-\frac{\pi}{2}, \frac{\pi}{2})}\dfrac{|\nc(\phi)|}{|\phi|}. 
\end{equation}
 
\item[Condition i of Theorem \ref{thm:cond:synch3}.] If we can choose an appropriate  $\C$ that guarantees  $\mathcal H$ is odd and $\nc$ is even odd or even, then $ \underbar c_{\nc} |\phi|\leq|\nc(\phi)|$, where 
\begin{equation}\label{underbarC}
 \underbar c_{\nc}:= \inf_{\phi\neq0\in(-\frac{\pi}{2}, \frac{\pi}{2})}\dfrac{|\nc(\phi)|}{|\phi|}. 
\end{equation}

\item[Condition iv of Theorem \ref{thm:cond:synch2} and condition ii of Theorem \ref{thm:cond:synch3}.] 
Assume that $\B$ is differentiable. Then 
$\ino(\phi,t) = \sqrt{\bs Z(\omega t+\phi)^\top (\B\B^\top)(\gamma(\omega t+\phi))\bs Z(\omega t+\phi)}$
becomes differentiable and  for any $\phi,\eta \in (-\frac{\pi}{2}, \frac{\pi}{2})$ and $t\in[0,T)$, 
\begin{equation}\label{barcK}
|\ino(\phi,t)-\ino(\eta,t)| \leq   \bar c_{\ino}  |\phi-\eta|,
 \quad \text{where}\quad
\bar c_{\ino} :=  
{\sup_{(\phi,t) \in (-\frac{\pi}{2}, \frac{\pi}{2})\times[0,T)}}
\frac{\partial }{\partial \phi} \ino(\phi,t),
\end{equation}
and 
\begin{equation}\label{underbarcK}
 \underbar c_{\ino}  (\phi-\eta)^2 \leq (\phi-\eta) (\ino(\phi,t)-\ino(\eta,t)),
 \quad \text{where}\quad
 \underbar c_{\ino} :=  {\inf_{(\phi,t) \in (-\frac{\pi}{2}, \frac{\pi}{2})\times[0,T)}}
\frac{\partial }{\partial \phi}  \ino(\phi,t). 
\end{equation}

\end{description}

In summary, we proved the following proposition. 

\begin{proposition}
Consider  \eqref{eq:couplde-SDE-with-delta} with $c_{ij}=c_{ji}=1$. Assume $\B$ is differentiable and $\C$ and $H$ are chosen such that $\mathcal{H}$ is an odd function. Then \eqref{eq:couplde-SDE-with-delta} stochastically synchronizes if one of the following holds:
\begin{enumerate}
\item $c: =-2\sigma^2 \hat  c_{\mathcal{F}} +  {2}\epsilon \underbar c_{\mathcal{H}}  \lambda-\delta^2  \bar c_{\nc}^2 (1-\frac{1}{N}) \lambda_{N} - \sigma^2 \bar c_{\ino}^2>0.$
\item $c<0$  but $c+ 2 \sigma^2 \underbar c_{\ino}^2>0.$
\end{enumerate}
If  $\C$ can make $\nc$ an odd or an even function, then \eqref{eq:couplde-SDE-with-delta} stochastically synchronizes if $c>0$ or 
$c<0$  but $c+ 2 \sigma^2 \underbar c_{\ino}^2 + \dfrac{\delta^2 \underbar c_{\nc}^2 \lambda_2^2}{6} >0.$
The constant bounds are defined in \eqref{barcF}-\eqref{underbarcK}. 
\end{proposition}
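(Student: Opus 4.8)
The plan is to observe that the stochastically averaged phase system \eqref{eq:couplde-SDE-with-delta-averaging}, specialized to $c_{ij}=c_{ji}=1$, is an instance of the general noisy network \eqref{eq:general-form2}: its intrinsic drift is $\mathcal{F}(\phi)=\omega+\tfrac{\sigma^2}{2}\mathrm{tr}[\B^\top\bs H\B]$, its scalar intrinsic diffusion is $\ino$ (reduced as in Case~1), its deterministic coupling function is $\mathcal{H}=\mathcal{H}_1+\mathcal{H}_2$ with $\mathcal{H}_1,\mathcal{H}_2$ given by \eqref{phase:coef:H1}--\eqref{phase:coef:H2}, and its scalar coupling diffusion is $\nc$ given by \eqref{phase:coef:K2}; the passage from the original coupled SDE \eqref{eq:couplde-SDE-with-delta} to this averaged form is justified by the averaging principle of Proposition~\ref{Prop:AveragingPrinciple} applied to the relative phases \eqref{eq:psi}. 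With this identification, the proof reduces to checking the hypotheses of Theorems~\ref{thm:cond:synch2} and~\ref{thm:cond:synch3} for this data and reading off the decay rate.

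First I would assemble the bound verifications carried out just above the statement: condition~(i) of Theorem~\ref{thm:cond:synch2} holds with $\bar c_{\mathcal{F}}=\sigma^2\hat c_{\mathcal{F}}$ as in \eqref{barcF} (using differentiability of $\B$); condition~(ii) holds with $\underbar c_{\mathcal{H}}$ as in \eqref{underbarcH}, using the standing hypothesis that $\mathcal{H}=\mathcal{H}_1+\mathcal{H}_2$ is odd, which yields $\phi\mathcal{H}(\phi)\ge\underbar c_{\mathcal{H}}\phi^2$; condition~(iii) holds with $\bar c_{\nc}$ as in \eqref{barC}; and condition~(iv) holds with $\bar c_{\ino}$ as in \eqref{barcK}. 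Theorem~\ref{thm:cond:synch2} then gives mean-square exponential synchronization whenever the associated constant \eqref{thm:c}, which here reads $c=-2\sigma^2\hat c_{\mathcal{F}}+2\epsilon\underbar c_{\mathcal{H}}\lambda-\delta^2\bar c_{\nc}^2(1-\tfrac{1}{N})\lambda_N-\sigma^2\bar c_{\ino}^2$, is positive; this is the first alternative of item~1.

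For the case $c<0$ the plan is to invoke Theorem~\ref{thm:cond:synch3} (equivalently Lemma~\ref{lemma:cond:synch3}) applied to the error dynamics of \eqref{eq:couplde-SDE-with-delta-averaging}, with $\alpha_1=-c/2$. If only condition~(ii) of Theorem~\ref{thm:cond:synch3} is available, i.e.\ co-coercivity of $\ino$ with $\underbar c_{\ino}$ as in \eqref{underbarcK}, then $\alpha_2^2=\sigma^2\underbar c_{\ino}^2$, and the requirement $\alpha_1<\alpha_2^2$ is exactly $c+2\sigma^2\underbar c_{\ino}^2>0$, the second alternative of item~1. If in addition $\C$ is chosen so that $\nc$ is odd or even, then condition~(i) of Theorem~\ref{thm:cond:synch3} holds with $\underbar c_{\nc}$ as in \eqref{underbarC}, and repeating the diffusion lower bound from the proof of Theorem~\ref{thm:cond:synch3} upgrades the estimate to $\alpha_2^2=\sigma^2\underbar c_{\ino}^2+\tfrac{\delta^2\underbar c_{\nc}^2\lambda_2^2}{12}$, so $\alpha_1<\alpha_2^2$ becomes $c+2\sigma^2\underbar c_{\ino}^2+\tfrac{\delta^2\underbar c_{\nc}^2\lambda_2^2}{6}>0$, the final claim. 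In every case Theorem~\ref{thm:cond:synch3} (or Theorem~\ref{thm:cond:synch2}) yields exponential decay of $\mathbb{E}\big(\sum_i|\theta_i-\psi|^2\big)^{p/2}$ along the averaged dynamics, hence stochastic synchronization.

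The genuine work, beyond bookkeeping, lies in three places. First, all the bounds \eqref{barcF}--\eqref{underbarcK} are local, valid only for relative phases in $(-\tfrac{\pi}{2},\tfrac{\pi}{2})$, so one must argue, as in the remark following Proposition~\ref{prop:noise-induced-sync}, that a sublevel set of the Lyapunov function $\tfrac{1}{2}e^\top e$ lies inside $(-\tfrac{\pi}{2},\tfrac{\pi}{2})^N$ and is forward invariant, which keeps the Lyapunov estimates legitimate along trajectories. Second, the hypothesis that $\mathcal{H}_1+\mathcal{H}_2$ is odd --- and, for the last claim, that $\nc$ is odd or even --- is a real design constraint on $\H$ and $\C$ relative to the fixed PRCs $\bs Z$ and $\bs H$; deriving it from symmetries such as $\H(x,y)=-\H(y,x)$ together with a parity of $\C$, via the integral representations \eqref{phase:coef:H1}--\eqref{phase:coef:K2}, is the main obstacle. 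Third, one should verify the ordering \eqref{order-coupling-noise} so that the averaged equation \eqref{eq:couplde-SDE-with-delta-averaging} retains all $\mathcal{O}(\epsilon)$ drift terms and all $\mathcal{O}(\delta)$ diffusion terms, which is what makes the synchronization conclusion for the averaged system meaningful for \eqref{eq:couplde-SDE-with-delta}.
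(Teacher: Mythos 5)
Your proposal is correct and follows essentially the same route as the paper: the paper's "proof" of this proposition is precisely the Case~3 derivation that precedes it, namely reducing \eqref{eq:couplde-SDE-with-delta} to the averaged form \eqref{eq:couplde-SDE-with-delta-averaging} via Proposition~\ref{Prop:AveragingPrinciple}, verifying conditions (i)--(iv) of Theorem~\ref{thm:cond:synch2} and (i)--(ii) of Theorem~\ref{thm:cond:synch3} with the constants \eqref{barcF}--\eqref{underbarcK}, and reading off $c>0$ from Theorem~\ref{thm:cond:synch2} and $\alpha_1=-c/2<\alpha_2^2$ from Theorem~\ref{thm:cond:synch3} in the two sub-cases $\alpha_2^2=\sigma^2\underbar c_{\ino}^2$ and $\alpha_2^2=\sigma^2\underbar c_{\ino}^2+\tfrac{\delta^2\underbar c_{\nc}^2\lambda_2^2}{12}$. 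Your identification of the remaining technical burdens (locality of the bounds on $(-\tfrac{\pi}{2},\tfrac{\pi}{2})$, the parity design constraints on $\H$ and $\C$, and the scaling \eqref{order-coupling-noise}) matches the paper's own caveats.
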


\section{Conclusions}\label{sec:conclusions}

In this paper, we first considered networks of non-linear systems with non-linear coupling which are driven by two sources of state-dependent white noise: a common noise and a noise generated by the interactions between the systems. 
We provided sufficient conditions that guarantee stochastic synchronization 
in such noisy networks and discussed the cases that noise can be useful or harmful for network synchronization. Next, we focused on networks of oscillators (instead of any arbitrary system) which are weakly coupled (instead of any arbitrary coupling) and using the notion of first- and second-order PRCs and averaging theory for deterministic and stochastic systems, we derived the corresponding stochastic phase equations. Then, we examined the synchronization conditions that we found in the first part of the paper, and provided new synchronization conditions in terms of the first- and second-order PRCs. We provided numerical examples to illustrate our results. 
 
 Future directions for investigation include:
(1) Leveraging the one dimensional  phase reduction of an $m$ dimensional noisy oscillator, we will compute various statistical properties of the time period of the noisy limit cycle. One period of the noisy limit cycle can be interpreted as the first passage/hitting time of the phase variable $\theta$  evolving according to \eqref{eq:full-couplde-SDE} with absorbing boundary at $\theta= 2\pi$ and initial condition at $\theta= 0$. 
(2) Using the corresponding Fokker-Planck equations, we will analytically estimate the moments of the time period of the synchronization solution that we discussed in this paper. Comparing the moments of the time periods of the synchronization solution and each isolated oscillator would allow us to understand the effect of coupling functions on the precision of the oscillators.
(3) We will consider a network of heterogeneous systems and provide conditions for stochastic cluster synchronization.

\section*{Acknowledgement}
This work was supported in part by  Simon Foundations grant 712522 and ARO grant W911NF-18-1-0325.

\section*{Appendix}
In this appendix we provide a few examples to clarify the difference between our approach and the existing approaches.  

\begin{example}
Consider the following SDE with a constant diffusion term, i.e., the It\^{o} and Stratonovich interpretations are identical:
\begin{equation}\label{app:sde1}
dx= ax dt+ b \; dW.
\end{equation}
Now we let $z=\sin(x)$ and compute $dz$. There are two ways to compute $dz$. 

First, we interpret  \eqref{app:sde1} in  Stratonovich and apply the ordinary chain rule. This gives the following SDE in  Stratonovich:
\[dz= ax \cos(x) dt+ b\cos(x) dW,\]
which by It\^{o} lemma, it transfers to the following SDE in It\^{o}:
\begin{equation}\label{app:sde1:Str:Ito}
dz= \left(ax \cos(x) - \frac{b^2}{2}\sin(x)\cos(x)\right) dt+ b\cos(x) dW.
\end{equation}

Second, we interpret  \eqref{app:sde1} in  It\^{o} and apply the It\^{o}  chain rule. This gives the following SDE in  It\^{o}:
\begin{equation}\label{app:sde1:Ito}
dz= \left(ax \cos(x) - \frac{b^2}{2}\sin(x)\right) dt+ b\cos(x) dW.
\end{equation}

Although both  \eqref{app:sde1:Str:Ito} and  \eqref{app:sde1:Ito} describe  $dz$ in the It\^{o} sense, they are not identical. 

\end{example}

\begin{example}
Consider the following Ornstein–Uhlenbeck SED with Stratonovich interpretation:
\begin{equation}\label{app:sde2}
dx= ax dt+ bx  dW.
\end{equation}
Now we let $z=\ln(x)$ and compute $dz$. Similar to the previous example, we employ two ways to compute $dz$. 

First, since \eqref{app:sde2} is interpreted as Stratonovich, we apply the ordinary chain rule. This gives the following SDE in  Stratonovich:
\[dz= a dt+ b \; dW.\]
Now by It\^{o} lemma, we transfer it to the following SDE in the It\^{o} sense:
\begin{equation}\label{app:sde2:Str:Ito}
dz= a dt+ b \; dW.
\end{equation}

Second, we transfer  \eqref{app:sde2} to an SDE with  It\^{o} interpretation.  This gives the following SDE in  It\^{o}:
\[ dx = \left(ax-\frac{b^2x}{2} \right) dt+b  \;dW.\]
Now, we apply the It\^{o}  chain rule to get:
\begin{equation}\label{app:sde2:Ito}
dz= \left(a-\frac{b^2}{2} \right) dt+b \; dW. 
\end{equation}

Note that both  \eqref{app:sde2:Str:Ito} and  \eqref{app:sde2:Ito} describe  $dz$ in the It\^{o} sense, however, they are not identical. 
\end{example}

\begin{example}
Now we consider the Van der Pol oscillator discussed in Example \ref{example:vanderpol1} with $\sigma\neq0$ and compute $d\theta$ where $\theta$ is the corresponding phase. Following the first method discussed in the above examples, $d\theta$ becomes:
\begin{equation}\label{app:sde3:SDE:Ito}
d\theta= \left(\omega + \frac{\sigma^2}{2} \bs{Z}(\theta)^\top \bs{Z}'(\theta) \right) dt+\sigma \bs{Z}(\theta)^\top dW, 
\end{equation}
and following the second method discussed in the above examples, $d\theta$ becomes:
\begin{equation}\label{app:sde3:Ito}
d\theta= \left(\omega+ \frac{\sigma^2}{2} \mathrm{tr}[\bs H(\theta)]\right) dt+\sigma \bs{Z}(\theta)^\top dW. 
\end{equation}
Fig~\ref{fig:vanderpol_appendix} shows that $\bs{Z}(\theta)^\top \bs{Z}'(\theta)$ and $\mathrm{tr}[\bs H(\theta)]$ are not identical. Therefore,  \eqref{app:sde3:SDE:Ito} which is derived in \cite{2011_Ermentrout_Beverlin_Troyer_Netoff, teramae2004robustness, teramae2009stochastic} and  \eqref{app:sde3:Ito} which is derived in this work, are not identical. 
 \begin{figure}[ht!]
        \centering
        \includegraphics[width=0.4\linewidth]{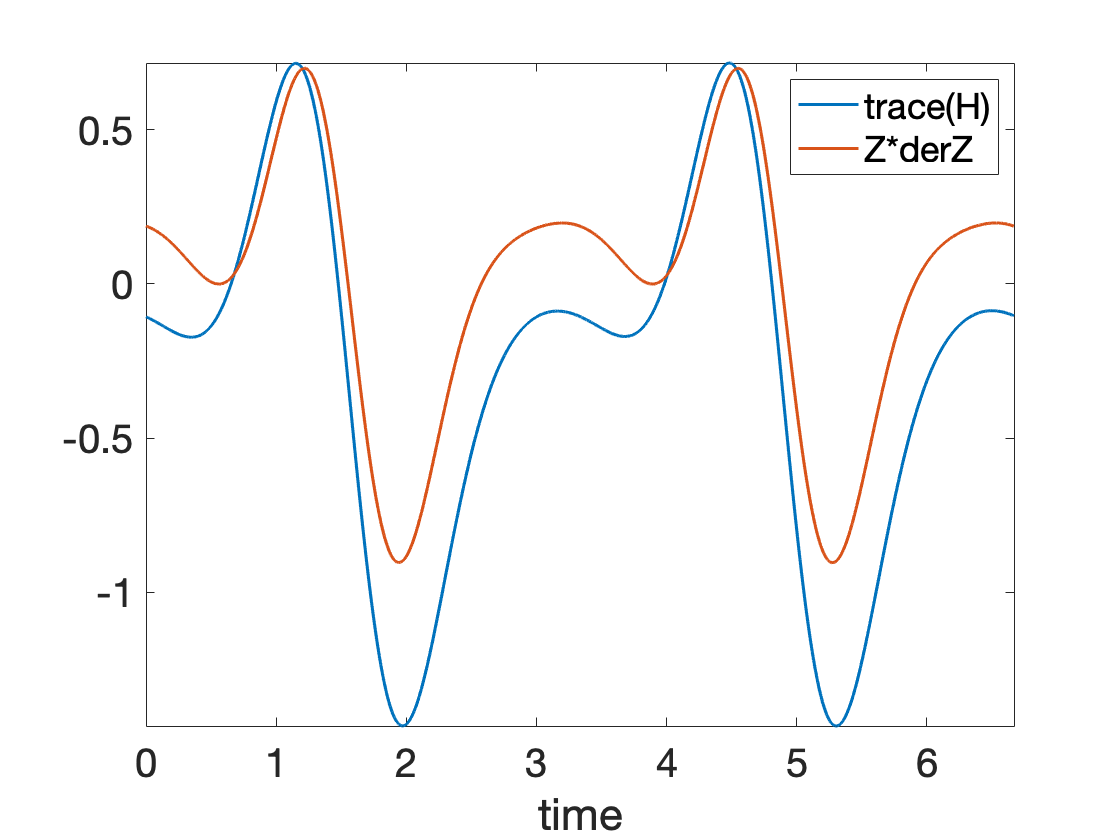}
        \caption{The comparison of $\bs{Z}(\theta)^\top \bs{Z}'(\theta)$ and $\mathrm{tr}[\bs H(\theta)]$ in the  Van der Pol oscillator.}
        \label{fig:vanderpol_appendix}
    \end{figure}
\end{example}



\begin{thebibliography}{10}

\bibitem{winfree2001geometry}
A.~T. Winfree, {\em The Geometry of Biological Time}, vol.~12.
\newblock Springer Science \& Business Media, 2001.

\bibitem{hoppensteadt2012weakly}
F.~C. Hoppensteadt and E.~M. Izhikevich, {\em Weakly Connected Neural
  Networks}, vol.~126.
\newblock Springer Science \& Business Media, 2012.

\bibitem{demir2000phase}
A.~Demir, A.~Mehrotra, and J.~Roychowdhury, ``Phase noise in oscillators: A
  unifying theory and numerical methods for characterization,'' {\em IEEE
  Transactions on Circuits and Systems—I: Fundamental Theory and
  Applications}, vol.~47, no.~5, p.~655, 2000.

\bibitem{hajimiri1998general}
A.~Hajimiri and T.~H. Lee, ``A general theory of phase noise in electrical
  oscillators,'' {\em IEEE Journal of Solid-State Circuits}, vol.~33, no.~2,
  pp.~179--194, 1998.

\bibitem{goldbeter2002computational}
A.~Goldbeter, ``Computational approaches to cellular rhythms,'' {\em Nature},
  vol.~420, no.~6912, p.~238, 2002.

\bibitem{Erment-Terman10}
G.~B. Ermentrout and D.~H. Terman, {\em Mathematical Foundations of
  Neuroscience}, vol.~35.
\newblock Springer Science \& Business Media, 2010.

\bibitem{kuramoto2003chemical}
Y.~Kuramoto, {\em Chemical Oscillations, Waves, and Turbulence}.
\newblock Courier Corporation, 2003.

\bibitem{2007_Tateno_Robinson}
T.~Tateno and H.~P.~C. Robinson, ``Phase resetting curves and oscillatory
  stability in interneurons of rat somatosensory cortex,'' {\em Biophysical
  journal}, vol.~92, pp.~683--695, 2007.

\bibitem{2010_Gouwens_etal}
N.~W. Gouwens, H.~Zeberg, K.~Tsumoto, T.~Tateno, K.~Aihara, and H.~P.~C.
  Robinson, ``Synchronization of firing in cortical fast-spiking interneurons
  at gamma frequencies: a phase-resetting analysis,'' {\em PLoS computational
  biology}, vol.~6, p.~e1000951, 2010.

\bibitem{Schwemmer2012}
M.~A. Schwemmer and T.~J. Lewis, ``The theory of weakly coupled oscillators,''
  in {\em Phase Response Curves in Neuroscience}, pp.~3--31, Springer, 2012.

\bibitem{Malkin49}
I.~Malkin, {\em Methods of Poincar\'{e} and Liapunov in the Theory of Nonlinear
  Oscillations}.
\newblock Moscow: Gostexizdat, 1949.

\bibitem{winfree1974patterns}
A.~Winfree, ``Patterns of phase compromise in biological cycles,'' {\em Journal
  of Mathematical Biology}, vol.~1, no.~1, pp.~73--93, 1974.

\bibitem{guckenheimer1975isochrons}
J.~Guckenheimer, ``Isochrons and phaseless sets,'' {\em Journal of Mathematical
  Biology}, vol.~1, no.~3, pp.~259--273, 1975.

\bibitem{schultheiss2011phase}
N.~W. Schultheiss, A.~A. Prinz, and R.~J. Butera, {\em Phase Response Curves in
  Neuroscience: Theory, Experiment, and Analysis}.
\newblock Springer Science \& Business Media, 2011.

\bibitem{sacre2014sensitivity}
P.~Sacre and R.~Sepulchre, ``Sensitivity analysis of oscillator models in the
  space of phase-response curves: Oscillators as open systems,'' {\em IEEE
  Control Systems Magazine}, vol.~34, no.~2, pp.~50--74, 2014.

\bibitem{brown2004phase}
E.~Brown, J.~Moehlis, and P.~Holmes, ``On the phase reduction and response
  dynamics of neural oscillator populations,'' {\em Neural Computation},
  vol.~16, no.~4, pp.~673--715, 2004.

\bibitem{teramae2009stochastic}
J.-N. Teramae, H.~Nakao, and G.~B. Ermentrout, ``Stochastic phase reduction for
  a general class of noisy limit cycle oscillators,'' {\em Physical Review
  Letters}, vol.~102, no.~19, p.~194102, 2009.

\bibitem{2010_Schwabedal_Pikovsky}
J.~T.~C. Schwabedal and A.~Pikovsky, ``Effective phase dynamics of
  noise-induced oscillations in excitable systems,'' {\em Phys. Rev. E},
  vol.~81, p.~046218, Apr 2010.

\bibitem{2013_Schwabedal_Pikovsky}
J.~T.~C. Schwabedal and A.~Pikovsky, ``Phase description of stochastic
  oscillations,'' {\em Phys. Rev. Lett.}, vol.~110, p.~204102, May 2013.

\bibitem{2014_Newby_Schwemmer}
J.~M. Newby and M.~A. Schwemmer, ``Effects of moderate noise on a limit cycle
  oscillator: Counterrotation and bistability,'' {\em Phys. Rev. Lett.},
  vol.~112, p.~114101, Mar 2014.

\bibitem{Moehlis_2014}
J.~Moehlis, ``Improving the precision of noisy oscillators,'' {\em Physica D:
  Nonlinear Phenomena}, vol.~272, pp.~8--17, 2014.

\bibitem{2014_Thomas_Lindner}
P.~J. Thomas and B.~Lindner, ``Asymptotic phase for stochastic oscillators,''
  {\em Phys. Rev. Lett.}, vol.~113, p.~254101, Dec 2014.

\bibitem{2017_Bonnin}
M.~Bonnin, ``Phase oscillator model for noisy oscillators,'' {\em The European
  Physical Journal Special Topics}, vol.~226, pp.~3227--3237, 2017.

\bibitem{bonnin2017amplitude}
M.~Bonnin, ``Amplitude and phase dynamics of noisy oscillators,'' {\em
  International Journal of Circuit Theory and Applications}, vol.~45, no.~5,
  pp.~636--659, 2017.

\bibitem{2018_Bressloff_MacLaurin}
P.~C. Bressloff and J.~N. MacLaurin, ``A variational method for analyzing
  stochastic limit cycle oscillators,'' {\em SIAM Journal on Applied Dynamical
  Systems}, vol.~17, no.~3, pp.~2205--2233, 2018.

\bibitem{2019_Thomas_Lindner}
P.~J. Thomas and B.~Lindner, ``Phase descriptions of a multidimensional
  ornstein-uhlenbeck process,'' {\em Phys. Rev. E}, vol.~99, p.~062221, Jun
  2019.

\bibitem{2020_Bressloff_MacLaurin}
P.~C. Bressloff and J.~N. MacLaurin, ``Phase reduction of stochastic
  biochemical oscillators,'' {\em SIAM Journal on Applied Dynamical Systems},
  vol.~19, no.~1, pp.~151--180, 2020.

\bibitem{2011_Ermentrout_Beverlin_Troyer_Netoff}
G.~B. Ermentrout, B.~Beverlin, T.~Troyer, and T.~I. Netoff, ``The variance of
  phase-resetting curves,'' {\em Journal of Computational Neuroscience},
  vol.~31, pp.~185--197, 2011.

\bibitem{teramae2004robustness}
J.-N. Teramae and D.~Tanaka, ``Robustness of the noise-induced phase
  synchronization in a general class of limit cycle oscillators,'' {\em
  Physical Review Letters}, vol.~93, no.~20, p.~204103, 2004.

\bibitem{Mao_book}
X.~Mao, {\em Stochastic Differential Equations and Applications}.
\newblock Woodhead Publishing, second~ed., 2011.

\bibitem{2019_Menara_Baggio_Bassett_Pasqualetti}
T.~{Menara}, G.~{Baggio}, D.~S. {Bassett}, and F.~{Pasqualetti}, ``A framework
  to control functional connectivity in the human brain,'' in {\em 2019 IEEE
  58th Conference on Decision and Control (CDC)}, pp.~4697--4704, 2019.

\bibitem{SIAM1}
R.~M. Ghigliazza and P.~Holmes, ``Minimal models of bursting neurons: How
  multiple currents, conductances, and timescales affect bifurcation
  diagrams,'' {\em SIAM Journal on Applied Dynamical Systems}, vol.~3, no.~4,
  pp.~636--670, 2004.

\bibitem{SIAM2}
R.~M. Ghigliazza and P.~Holmes, ``A minimal model of a central pattern
  generator and motoneurons for insect locomotion,'' {\em SIAM Journal on
  Applied Dynamical Systems}, vol.~3, no.~4, pp.~671--700, 2004.

\bibitem{Marder_Bucher_CPG_review}
E.~Marder and D.~Bucher, ``Central pattern generators and the control of
  rhythmic movements,'' {\em Current Biol.}, vol.~11, no.~23, pp.~R986--R996,
  2001.

\bibitem{Ijspeert_CPG_review}
A.~Ijspeert, ``Central pattern generators for locomotion control in animals and
  robots: A review,'' {\em Neural Networks}, vol.~21, no.~4, pp.~642--653,
  2008.

\bibitem{ASH18}
Z.~Aminzare, V.~Srivastava, and P.~Holmes, ``Gait transitions in a phase
  oscillator model of an insect central pattern generator,'' {\em SIAM Journal
  on Applied Dynamical Systems}, vol.~17, no.~1, pp.~626--671, 2018.

\bibitem{AH19}
Z.~Aminzare and P.~Holmes, ``Heterogeneous inputs to central pattern generators
  can shape insect gaits,'' {\em SIAM Journal on Applied Dynamical Systems},
  vol.~18, no.~2, pp.~1037--1059, 2019.

\bibitem{Russo-Wirth-Shorten-2019}
G.~Russo, F.~Wirth, and R.~Shorten, ``On synchronization in continuous-time
  networks of nonlinear nodes with state-dependent and degenerate noise
  diffusion,'' {\em IEEE Transactions on Automatic Control}, vol.~64, no.~1,
  pp.~389--395, 2019.

\bibitem{Russo-Shorten-2018}
G.~Russo and R.~Shorten, ``On common noise-induced synchronization in complex
  networks with state-dependent noise diffusion processes,'' {\em Physica D:
  Nonlinear Phenomena}, vol.~369, pp.~47--54, 2018.

\bibitem{Stanzhitskii_2001}
A.~M. Stanzhitskii, ``Investigation of invariant sets of {I}tô stochastic
  systems with the use of lyapunov functions,'' {\em Ukrainian Mathematical
  Journal}, vol.~53, pp.~323--327, 02 2001.

\bibitem{bjork2009arbitrage}
T.~Bj{\"o}rk, {\em Arbitrage Theory in Continuous Time}.
\newblock Oxford University Press, 2009.

\bibitem{2018_Giacomin_Poquet_Shapira}
G.~Giacomin, C.~Poquet, and A.~Shapira, ``Small noise and long time phase
  diffusion in stochastic limit cycle oscillators,'' {\em Journal of
  Differential Equations}, vol.~264, no.~2, pp.~1019--1049, 2018.

\bibitem{2019_Aminzare_Holmes_Srivastava}
Z.~{Aminzare}, P.~{Holmes}, and V.~{Srivastava}, ``On phase reduction and time
  period of noisy oscillators,'' in {\em 2019 IEEE 58th Conference on Decision
  and Control (CDC)}, pp.~4717--4722, Dec 2019.

\bibitem{wilson2018greater}
D.~Wilson and B.~Ermentrout, ``Greater accuracy and broadened applicability of
  phase reduction using isostable coordinates,'' {\em Journal of Mathematical
  Biology}, vol.~76, no.~1-2, pp.~37--66, 2018.

\bibitem{macedo2013typing}
H.~D. Macedo and J.~N. Oliveira, ``Typing linear algebra: A biproduct-oriented
  approach,'' {\em Science of Computer Programming}, vol.~78, no.~11,
  pp.~2160--2191, 2013.

\bibitem{guckenheimer2013nonlinear}
J.~Guckenheimer and P.~Holmes, {\em Nonlinear oscillations, dynamical systems,
  and bifurcations of vector fields}, vol.~42.
\newblock Springer Science \& Business Media, 2013.

\bibitem{Random_perturbations_DS_book}
M.~Freidlin and A.~Wentzell, {\em Random Perturbations of Dynamical Systems}.
\newblock Springer-Verlag Berlin Heidelberg, third edition~ed., 2012.

\end{thebibliography}
\end{document}